\def\today{${\scriptscriptstyle\number\day-\number\month-\number\year}$}
\newtheorem{theorem}{Theorem}[section]
\newtheorem{lemma}[theorem]{Lemma}
\newtheorem{definition}[theorem]{Definition}
\newtheorem{remark}[theorem]{Remark}
\def\address#1{{\center{#1}}}
\date{}
\def\m@th{\mathsurround=0pt}
\def\eqal#1{\null\,\vcenter{\openup\jot\m@th
 \ialign{\strut\hfil$\displaystyle{##}$&&$\displaystyle{{}##}$\hfil
 \crcr#1\crcr}}\,}
\def\matrix#1{\null\,\vcenter{\normalbaselines\m@th
 \ialign{\hfil$##$\hfil&&\quad\hfil$##$\hfil\crcr
 \mathstrut\crcr\noalign{\kern-\baselineskip}
 #1\crcr\mathstrut\crcr\noalign{\kern-\baselineskip}}}\,}
\def\D{{\mathbb D}}
\def\N{{\mathbb N}}
\def\R{{\mathbb R}}
\def\divv{{\rm div}\,}
\def\supp{{\rm supp\,}}
\def\\{\hfill\break}
\def\nad#1#2{{\mathop{#1}\limits^{#2}\null\!}\null}
\numberwithin{equation}{section}
\title{On the Stokes system in cylindrical domains}
\author{J. Renc\l awowicz$^{\left.1\right)}$,\quad Wojciech M. Zaj\c{a}czkowski$^{\left.2\right),\left.3\right)}$}
\begin{document}
\input amssym.def
\input amssym.tex
\maketitle
\thispagestyle{fancy}

\address{$^{\left.1\right)}$\ Institute of Mathematics, Polish Academy of Sciences,\\
\'Sniadeckich 8, 00-656 Warsaw, Poland\\
e-mail: jr@impan.pl\\
$^{\left.2\right)}$\ Institute of Mathematics, Polish Academy of Sciences,\\
\'Sniadeckich 8, 00-656 Warsaw, Poland\\
e-mail: wz@impan.pl\\
$^{\left.3\right)}$\ Institute of Mathematics and Cryptology, 
Cybernetics Faculty, \\
Military University of Technology,\\
S. Kaliskiego 2, 00-908 Warsaw, Poland\\}

\begin{abstract}
The existence of solutions to some initial-boundary value problem for the Stokes system is proved. The result is shown in Sobolev-Slobodetskii spaces such that the velocity belongs to $W_r^{2+\sigma,1+\sigma/2}(\Omega^T)$ and gradient of pressure to $W_r^{\sigma,\sigma/2}(\Omega^T)$, where $r\in(1,\infty)$, $\sigma\in(0,1)$, $\Omega^T=\Omega\times(0,T)$. These are special Besov spaces: $B_{r,r}^{2+\sigma,1+\sigma/2}(\Omega^T)$ and $B_{r,r}^{\sigma,\sigma/2}(\Omega^T)$, respectively. The existence is proved by the technique of regularizer.
\end{abstract}

\section{Introduction}\label{se1}

We consider the following initial-boundary value problem for the Stokes system in a cylindrical domain $\Omega\subset\R^3$,
\begin{equation}\eqal{
&v_t-\nu\Delta v+\nabla p=f\quad &{\rm in}\ \ \Omega\times(0,T),\cr
&\divv v=g\quad &{\rm in}\ \ \Omega\times(0,T),\cr
&\bar n\cdot\D(v)\cdot\bar\tau_\alpha=b_\alpha,\ \ \alpha=1,2,\quad &{\rm on}\ \ S\times(0,T),\cr
&v\cdot\bar n=b_3\quad &{\rm on}\ \ S\times(0,T),\cr
&v|_{t=0}=v_0\quad &{\rm in}\ \ \Omega,\cr}
\label{1.1}
\end{equation}
where $S=S_1\cup S_2$. Introducing the Cartesian system of coordinates $x=(x_1,x_2,x_3)$ we assume that $\Omega$ and $S_1$ are parallel to the $x_3$-axis and $S_2$ is perpendicular to it.
By $\bar n$ we denote the unit outward vector normal to $S$ and $\bar\tau_1$, $\bar\tau_2$ are tangent to $S$.

The boundaries $S_1$ and $S_2$ of cylindrical domain $\Omega$ meet along curves $L_1$ and $L_2$ under the angle $\pi/2$. This means that the considered domain $\Omega$ is geometrically complicated. Therefore, we prove existence of solutions to problem (\ref{1.1}) by the technique of regularizer. For this we need a complex partition of unity.

Using the partition of unity we can localize problem (\ref{1.1}) to local problems near $S_1$, near $S_2$ and also near edges $L_1$, $L_2$. Then localized problems near $S_1$ and $S_2$ are transformed to problems in the half space by an appropriate transformation of coordinates. We can assume that the half space in local coordinates is such that $x_3>0$. Applying the Fourier-Laplace transform with respect to time and tangent derivatives to $x_3=0$ we obtain a system of ordinary differential equations with respect to $x_3$. Solving these equations we can estimate solutions in Besov spaces by using the Triebel definition, (see \cite{Tr1}, Def 2.1 and Def 2.3.1).

However, to prove the existence of solutions by the technique of regularizer we need the formulation of  Besov spaces introduced by Besov (see \cite[Ch. 3, Sect. 18]{BIN}). Fortunately, it is shown (see Lemma 2.12) that both definitions are equivalent.

The most difficult local problem are problems near edges $L_\alpha$, $\alpha=1,2$. This can be also transformed to a problem in the half space.

The main result is the following

\begin{theorem}\label{t1.1}
Assume that $f\in W_r^{\sigma,\sigma/2}(\Omega^T)$, $v_0\in W_r^{2+\sigma-2/r}(\Omega)$,\break $g\in W_r^{1+\sigma,1/2+\sigma/2}(\Omega^T)$, $b_\alpha\in W_r^{1+\sigma-1/r,1/2+\sigma/2-1/2r}(S_i^T)$, $\alpha=1,2$, and\break $b_3\in W_r^{2+\sigma-1/r,1+\sigma/2-1/2r}(S_i^T)$, where $r\in(1,\infty)$, $\sigma\in(0,1)$, $i=1,2$.\\
Assume that $S_1\in C^3$ Assume some compatibility conditions (see Remark \ref{r1.2}).\\
Then there exists a solution to problem (\ref{1.1}) such that $v\in W_r^{2+\sigma,1+\sigma/2}(\Omega^T)$, $\nabla p\in W_r^{\sigma,\sigma/2}(\Omega^T)$, and the estimate holds
\begin{equation}\eqal{
&\|v\|_{W_r^{2+\sigma,1+\sigma/2}(\Omega^T)}+\|\nabla p\|_{W_r^{\sigma,\sigma/2}(\Omega^T)}\cr
&\le c\bigg(\|f\|_{W_r^{\sigma,\sigma/2}(\Omega^T)}+ \|g\|_{W_r^{1+\sigma,1/2+\sigma/2}(\Omega^T)}\cr
&\quad+\sum_{i=1}^2\bigg(\sum_{\alpha=1}^2 \|b_\alpha\|_{W_r^{1+\sigma-1/r,1/2+\sigma/2-1/2r}(S_i^T)}\cr
&\quad+\|b_3\|_{W_r^{2+\sigma-1/r,1+\sigma/2-1/2r}(S_i^T)}\bigg)+ \|v_0\|_{W_r^{2+\sigma-2/r}(\Omega)}\bigg),\cr}
\label{1.2}
\end{equation}
where $c$ does not depend on $v$ neither $p$.
\end{theorem}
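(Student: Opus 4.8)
The plan is to establish Theorem \ref{t1.1} by the \emph{technique of regularizer} (the Agmon--Douglis--Nirenberg / Solonnikov method of building an approximate right inverse out of local solution operators). First I would fix a finite covering of $\bar\Omega$ by balls $\{\omega^{(k)}\}$ of small radius $\lambda$, subordinate to which one takes a smooth partition of unity $\{\zeta^{(k)}\}$. The covering must be of the three types dictated by the geometry: interior balls (where the problem is the Cauchy problem for the Stokes system in the whole space), balls centered on the smooth parts $S_1$ or $S_2$ (half-space problems with the Navier-type boundary conditions $\bar n\cdot\D(v)\cdot\bar\tau_\alpha=b_\alpha$, $v\cdot\bar n=b_3$), and balls centered on the edges $L_1,L_2$ (a dihedral-angle $\pi/2$ problem). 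For each type one needs a model a priori estimate in the Besov/Sobolev--Slobodetskii norms of the statement; these are obtained exactly as sketched in the introduction — apply the Fourier--Laplace transform in $t$ and in the tangential variables, solve the resulting ODE system in the normal variable, and read off the bound using the Triebel definition of the spaces, then transfer it to Besov's definition via Lemma 2.12. The edge model problem is the genuinely new ingredient and I expect its solvability and estimate (presumably proved in an earlier section) to be the main obstacle; everything else is assembly.

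Next I would carry out the regularizer construction. Denote by $\mathcal{R}^{(k)}$ the solution operator of the $k$-th model problem and set $R u := \sum_k \zeta^{(k)}\,\mathcal{R}^{(k)}\big(\zeta^{(k)}_{*}\,u\big)$, where $\zeta^{(k)}_{*}$ is an auxiliary cutoff equal to $1$ on $\supp\zeta^{(k)}$; here $u=(f,g,b_\alpha,b_3,v_0)$ denotes the full data tuple and the model operators act componentwise in the appropriate trace spaces. Plugging $(v,\nabla p)=R u$ into (\ref{1.1}) leaves a commutator error: the differential operators and the cutoffs $\zeta^{(k)}$ do not commute, so $\mathcal{L}(R u) = u + T u$, where $\mathcal{L}$ is the Stokes operator-with-boundary-data and $Tu$ collects terms in which at least one derivative falls on $\zeta^{(k)}$. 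The key point is that every term of $Tu$ contains $\nabla\zeta^{(k)}$ or $\Delta\zeta^{(k)}$, hence is of \emph{lower order}; by the standard interpolation/small-parameter argument one has
\begin{equation}
\|Tu\|_{\mathcal{Y}} \le c\big(\lambda\,\|u\|_{\mathcal{X}} + c(\lambda)\,\|u\|_{\mathcal{X}'}\big),
\label{eq:commest}
\end{equation}
where $\mathcal{X}$ is the data space on the right of (\ref{1.2}), $\mathcal{Y}$ the same space of data, and $\mathcal{X}'$ a weaker norm (fewer derivatives, or the same norm on $\Omega^{T'}$ with $T'<T$). Choosing $\lambda$ small makes the first term a contraction; the lower-order term is absorbed either by a Neumann series after a further decomposition in time into short intervals, or by a compactness/continuity argument, so that $(I+T)$ is invertible on $\mathcal{X}$ and $R(I+T)^{-1}$ is a bounded right inverse of $\mathcal{L}$. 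This produces a solution $(v,\nabla p)$ together with the estimate (\ref{1.2}).

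Two points need care in the write-up. First, the pressure: the model operators deliver $\nabla p^{(k)}$, not $p^{(k)}$, and $\sum_k \zeta^{(k)}\nabla p^{(k)} \ne \nabla(\sum_k\zeta^{(k)}p^{(k)})$; one recovers a genuine $\nabla p$ by noting that the discrepancy is a gradient of something controlled, or by applying $\divv$ to the momentum equation and using the equation for $g$ — in any case the bound for $\|\nabla p\|_{W_r^{\sigma,\sigma/2}(\Omega^T)}$ follows from the bounds for the other terms. Second, the compatibility conditions of Remark \ref{r1.2} must be exactly those needed for each model problem to be solvable in the stated class (matching of $\divv v_0=g|_{t=0}$, of the boundary traces of $v_0$ with $b_3|_{t=0}$, etc., with the order of compatibility governed by whether $2+\sigma-2/r$ exceeds the relevant trace thresholds); these are inherited additively under the partition of unity because each $\zeta^{(k)}$-localized datum satisfies the corresponding local compatibility condition. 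Uniqueness is not asserted in the theorem, so I would stop once existence with estimate (\ref{1.2}) is in hand; if desired it follows from the a priori estimate applied to the difference of two solutions with zero data. The single hardest step remains the edge estimate — controlling the dihedral-corner singularity so that the solution still lies in $W_r^{2+\sigma,1+\sigma/2}$ with no loss of regularity for $r\in(1,\infty)$, $\sigma\in(0,1)$ — and the choice of $r,\sigma$ in the hypotheses is presumably dictated precisely by what that model problem allows.
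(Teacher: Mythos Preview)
Your outline is correct and follows the paper's own route almost exactly: reduce to zero initial data, solve the three model problems (whole space, half-space, edge), assemble a regularizer $R$ from the local solution operators, and show $ARh=h+Th$, $RAw=w+Ww$ with $\|T\|,\|W\|<1$ for small $\lambda$ and small time step $\tau$, then invert by Neumann series.

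One point is worth correcting in your expectations. You flag the edge model problem as the hardest step and anticipate a genuine dihedral-corner analysis. In the paper this step is much cheaper than you think: because $S_1$ and $S_2$ meet at angle $\pi/2$ and the boundary conditions are of slip type, on the face coming from $S_1$ one has (after flattening) $v_1=b_1''$, $v_{2,x_1}=b_2''-b_{1,x_2}''$, $v_{3,x_1}=b_3''-b_{1,x_3}''$, i.e.\ one Dirichlet and two Neumann conditions. After subtracting an extension that makes these homogeneous, the solution is extended by even/odd reflection across $x_1=0$, and the dihedral problem collapses to the half-space problem already solved. No corner-singularity estimate is needed, and this is precisely why no extra restriction on $r,\sigma$ appears. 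A second minor difference: where you propose to kill the lower-order remainder in \eqref{eq:commest} by interpolation plus time-slicing, the paper simply works on a short time interval $(0,\tau)$ from the outset and makes both $\lambda$ and $\tau$ small to get $\|T\|<1$ directly; the extension to $(0,T)$ is then by iteration in time.
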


\begin{remark}\label{r1.2}
Compatibility conditions
$$\eqal{
&\bar n\cdot\D(v_0)\cdot\bar\tau_\alpha|_{S_1}=b_\alpha|_{S_1}, \alpha=1,2,\quad &{\rm in}\quad W_r^{1+\sigma-3/r}(S_1),\cr
&v\cdot\bar n|_S=b_3|_{t=0}\quad &{\rm in}\quad W_r^{2+\sigma-3/r}(S),\cr
&\divv v_0=g|_{t=0}\quad &{\rm in}\quad W_r^{1+\sigma-2/r}(\Omega).\cr}
$$
\end{remark}

The existence of solutions to problem (\ref{1.1}) for sufficiently smooth boundary is proved in \cite{ZZ2} in Besov spaces such that $v\in B_{p,q}^{\sigma+2,{\sigma\over2}+1}(\Omega^T)$, $\nabla p\in B_{p,q}^{\sigma,\sigma/2}(\Omega^T)$, $\sigma\in\R_+$, $p,q\in(1,\infty)$. However, in \cite{ZZ2} the classical fundamental approach as solvability of problems in the whole space, in the half space and finally the existence in a bounded domain by the technique of regularizer are not used.

In \cite{ZZ1} the existence of solutions to the initial boundary value problem for the heat equation is proved in Besov spaces using the Triebel definition of Besov spaces (see \cite{Tr1}, Def. 2.3.1) and using the techniques developed by Triebel in \cite{Tr1}, Section 2.3.6. Continuing the approach from \cite{ZZ1} we prove the existence of solutions to problem (\ref{1.1}) in the cylindrical domain~$\Omega$.

We mention that the existence of solutions to the nonstationary Stokes system in domains with smooth boundaries was extensively studied by many authors (see \cite{Al}, \cite{S2}, \cite{S3}, \cite{S4}).
In these papers the anisotropic Sobolev and H\"older spaces were applied.

Besov spaces were used in some approach to Navier-Stokes equations. We mention some of these results below.

In \cite{B}, the author proves analicity of Stokes operator in Besov space $B^a_{p,q}(\R^n_+).$ First, it is shown that the semigroup obtained by the Stokes solution on $\R^n_+$ is analytic in Besov spaces in time. Then, the asymptotic behavior of solutions for the nonstationary Navier-Stokes equations is estimated and the decay rate in the Besov space is studied.

The paper \cite{DM} is devoted to the boundary value problem for the incompressible inhomogeneous Navier–-Stokes equations in the half-space in the case of small data with critical regularity. The result states that in
dimension $n \ge 3$, if the initial density is close to a positive constant in $L_{\infty} \cap \dot{W}^1_n(\R^n_+)$ and
the initial velocity is small with respect to the viscosity in the homogeneous Besov space $\dot{B}^0_{n,1}(\R^n_+)$ then the equations have a unique global solution. The proof strongly relies on maximal regularity estimates for the Stokes system in the half-space in $L_1(0,T ;\dot{B}^0_{p,1}(\R^n_+)).$ Namely, it was necessary to obtain time-independent maximal estimates for the linearized velocity equation, i.e. the evolutionary Stokes system. They obtained standard estimates in Lebesgue spaces for the transport equation for the density and estimates in homogeneous Besov spaces for the Stokes system and in order to bound nonlinear terms in the linearized problem, bilinear estimates in Besov spaces have been used.

In \cite{R}, the author studied global well-posedness for the nonnhomogeneous Navier-Stokes equations on $\R^n, n \ge 2,$ with initial velocity
in endpoint critical Besov spaces $B^{-1+n/q}_{q,\infty}(\R^n), n \le q < 2n,$ and merely bounded initial density with a positive lower bound. He considered a multiplication property of $L^{\infty}$-functions in some Bessel potential and Besov spaces. Based on it and on maximal regularity of the Stokes operator in little Nicolskii spaces, it was shown solvability for the momentum equations
with fixed bounded density. The proof for existence of a solution to the nonnhomogeneous Navier-Stokes equations was done
via an iterative scheme when $B^{-1+n/q}_{q,\infty}$ -norm of initial velocity and relative variation of initial density are small, while
uniqueness of a solution was proved via a Lagrangian approach when initial velocity belongs to $B^{-1+n/q}_{q,\infty}(\R^n)\cap B^{-1+n/q}_{r,\infty}(\R^n)$
for slightly larger $r>q.$

In \cite{S}, a local in time solution was constructed for the Cauchy problem
of the $n$-dimensional Navier–-Stokes equations when the initial velocity
belongs to Besov spaces of nonpositive order. The space contains $L^{\infty}$
in some exponents, so the solution may not decay at space infinity.
In order to use iteration scheme it was necessary to establish the H\"{o}lder type
inequality for estimating bilinear term by dividing the sum of Besov
norm with respect to levels of frequency. Moreover, by regularizing
effect solutions belong to $L^{\infty}$ for any positive time.

In \cite{OS}, authors considered global well-posedness of the Cauchy problem of the incompressible Navier–Stokes equations under the Lagrangian coordinates in scaling critical Besov spaces. They proved the system is globally well-posed in the homogeneous Besov space $\dot{B}^{- 1 + n/p}_{p,1}(\R^n)$ with $1\le p \le \infty$. They improved the former result, restricted for $1 \le p \le 2n$ and the main reason why the well-posedness space was enlarged is that the quasi-linear part of the system has a special feature called a multiple divergence structure and the bilinear estimate for the nonlinear terms are improved by such a structure. The result indicates that the Navier-Stokes equations can be transferred from the Eulerian coordinates to the Lagrangian coordinates even for the solution in the limiting critical Besov spaces.

 In \cite{KOT1},
authors got the full regularity of weak solutions to Navier-Stokes equations under some assumptions on the
velocity $u,$ namely, if \\
$u \in L^2(0, T; \dot{B}^0_{\infty,\infty}(\R^3)),$
then the solution $u$ is regular. By the definition of Besov space and Bernstein
inequality, the condition is equivalent to:
$\nabla u \in L^2(0, T; \dot{B}^{-1}_{\infty,\infty}(\R^3)).$ On the other hand, in \cite{FQ}, if $\nabla u_{h} \in L^{8/3}(0, T; \dot{B}^{-1}_{
\infty,\infty}(\R^3))$ or  $\nabla u_3 \in L^{\frac{8}{5-2/s}}(0,T; \dot{B}^{-s}_{
\infty,\infty}(\R^3)), 0<s<1,$ where $\nabla u_h$ denotes horizontal gradient components, then $u$ is a regular solution.

In \cite{KOT2}, the local existence theorem for the Navier-Stokes equations in $\R^n, n\ge 2,$ with the initial data in $B^0_{\infty,\infty}$ containing functions that do not decay at infinity, was proved. Moreover, authors established the extension criterion on local solutions in terms of the vorticity in the homogeneous Besov space $\dot{B}^0_{\infty,\infty}.$

In \cite{CZ}, authors established the space-time estimates in the Besov spaces of the
solution to the Navier-Stokes equations in $\R^n, n \ge 3$. As an application, they improved some known results about the regularity criterion of weak solutions and the blow-up criterion of smooth solutions. Instead of the logarithmic Sobolev inequality, the main tools were the frequency localization and the Littlewood-Paley trichotomy decomposition, as a basic way to analyze bilinear expressions.

In \cite{KS}, authors show the existence theorem of global mild solutions to Navier-Stokes equations in the whole space $\R^n$ with small initial data and external forces in the time-weighted Besov space which is an invariant space under the change of scaling. The result on local existence of solutions for large data is also discussed. The method is based on the $L^p-L^q$ estimate of the Stokes equations in Besov spaces. Using various estimates for the Stokes semi-group in the homogeneous Besov space and based on the paraproduct formula, they established a bilinear estimate related to the nonlinear term of
Navier-Stokes equation to apply the implicit function theorem and show the local existence.

In \cite{KM}, the local existence, uniqueness and regularity of solutions of the initial-value problem for non-stationary Navier-Stokes equations were studied via abstract Besov spaces. Authors could prove an estimate of  semigroups in abstract Besov spaces instead of fractional powers. In the paper, the domain is a bounded domain in $\R^n$,  a half-space of $\R^n$ with $n \ge 2$, or an exterior domain in $\R^n$ with $n \ge 3,$ and the boundary is smooth.

The remaining part of the paper is divided into the following parts.
\begin{itemize}
\item[*] In Section \ref{se2} we introduce all necessary notations, definitions and auxiliary results.
\item[*] In Section \ref{se3} we consider the Stokes system in the whole space.
\item[*] In Section \ref{se4} the Stokes system in the half-space is examined.
\item[*] In Section \ref{se5} the existence of solutions in the cylindrical domain is proved with the technique of regularizer.
\item[*] Finally, in Section \ref{se6} solvability of the Stokes system in Sobolev spaces is discussed.
\end{itemize}

\section{Notation and preliminaries}\label{se2}

In this Section we begin the part of this paper devoted to the problem of existence of solutions to the Stokes system in anisotropic Besov spaces.
Let $\R^3$ be a three-dimensional real Euclidean space. Let $x=(x_1,x_2,x_3)\in\R^3$ be the system of Cartesian coordinates.

Throughout the paper we use the notation: $x'=(x_1,x_2)$, \\ $\bar x=(x_1,x_2,x_3,t)\equiv(x,t)$, $\bar x'=(x_1,x_2,t)\equiv(x',t)$.
For $\bar x\in\R^4$ we introduce the anisotropic distance from the origin of coordinates,
$$
|\bar x|_a=\bigg(|t|+\sum_{i=1}^3|x_i|^2\bigg)^{1/2}
$$
and, similarly for $\bar x'\in\R^3$, we have
$$
|\bar x'|_a=\bigg(|t|+\sum_{i=1}^2|x_i|^2\bigg)^{1/2}.
$$
Let $S(\R^n)$ and $S'(\R^n)$ be the Schwartz space and the space of tempered distributions on $\R^n$, respectively.

\begin{definition}\label{d2.1}
By $\Phi_a(\R^n)$ we denote the collection of all systems \\ $\varphi=\{\varphi_k(\bar x)\}_{k=0}^\infty\subset S(\R^4)$ with the following properties
\begin{itemize}
\item[$1^\circ$] $\supp\varphi_0\subset\{\bar x\colon|\bar x|_a\le 2\}$;
\item[$2^\circ$] $\supp\varphi_k\subset\{\bar x\colon 2^{k-1}\le|\bar x|_a\le 2^{k+1}\}$, $k=1,2,\dots$;
\item[$3^\circ$] for every multi-index $\bar\alpha=(\alpha_1,\alpha_2,\alpha_3,\alpha_0)$, there exists a positive number $c_{\bar\alpha}$ such that
$$
2^{k(2\alpha_0+\sum_{i=1}^3\alpha_i)}|D^{\bar\alpha}\varphi_k(\bar x)|\le c_{\bar\alpha}
$$
for all $k\in\N\cup\{0\}\equiv\N_0$ and all $\bar x\in\R^4$, where
$$
D^{\bar\alpha}u=\partial_t^{\alpha_0}\partial_{x_1}^{\alpha_1}\partial_{x_2}^{\alpha_2} \partial_{x_3}^{\alpha_3}u
$$
and $|\bar\alpha|=\alpha_0+\alpha_1+\alpha_2+\alpha_3$, $\alpha_j\in\N_0$, $j\in\{0,1,2,3\}$;
\item[$4^\circ$] $\sum_{k=0}^\infty\varphi_k(\bar x)=1$ for all $\bar x\in\R^4$.
\end{itemize}
\end{definition}

A similar definition can be introduced for functions $\varphi_k$ depending on $\bar x'$.

\begin{definition}\label{d2.2}
Introduce the following Fourier transforms. Let $f\in S'(\R^4).\!$ Then
$$
(Ff)(\xi,\xi_0)=\intop_{\R^3\times\R}e^{-i(t\xi_0+x\cdot\xi)}f(x,t)dxdt
$$
and
$$
(F^{-1}f)(x,t)={1\over(2\pi)^4}\intop_{\R^3\times\R} e^{i(t\xi_0+x\cdot\xi)}f(\xi,\xi_0)d\xi d\xi_0,
$$
where $x\cdot\xi=x_1\xi_1+x_2\xi_2+x_3\xi_3$, $d\xi=d\xi_1d\xi_2d\xi_3$.\\
Then the corresponding Laplace-Fourier transform has the form
$$
({\cal L}f)(\xi,\xi_0,\gamma)=(Ff_\gamma)(\xi,\xi_0)
$$
where
$$
f_\gamma=\left\{\eqal{&e^{-\gamma t}f\quad &{\rm for}\ \ t>0,\cr
&0\quad &{\rm for}\ \ t<0.\cr}\right.
$$
Let $f\in S'(\R^3)$. Then
$$
(F_1f)(\xi',\xi_0)=\intop_{R^2\times\R}e^{-i(t\xi_0+x'\cdot\xi')} f(x',t)dx'dt
$$
and
$$
(F_1^{-1}f)(x',t)={1\over(2\pi)^3}\intop_{\R^2\times\R}e^{i(t\xi_0+x'\xi')} f(\xi',\xi_0)d\xi'd\xi_0,
$$
where $x'\cdot\xi'=x_1\xi_1+x_2\xi_2$, $d\xi'=d\xi_1d\xi_2$, $dx'=dx_1dx_2$.
Hence
$$
({\cal L}_1f)(\xi',\xi_0,\gamma)=F_1(f_\gamma)(\xi',\xi_0,\gamma).
$$
\end{definition}

Now we are going to define anisotropic Besov spaces
$$
B_{p,q}^{\sigma,\sigma/2}(\R^3\times\R),\quad B_{p,q}^{\sigma,\sigma/2}(\R^2\times\R),\quad B_{p,q}^{\sigma,\sigma/2}(\R_+^3\times\R),
$$
where $\R_+^3=\{x\in\R^3\colon x_3>0\}$.

To this end, for $f\in S'(\R^4)$ we introduce the Fourier transform
$$
(F_2f)(\xi',x_3,\xi_0)=\intop_{\R^2\times\R}e^{-i(t\xi_0+x'\cdot\xi')} f(x',x_3,t)dx'dt.
$$
Then the corresponding Laplace-Fourier transform has the form
$$
({\cal L}_2f)(\xi',x_3,\xi_0,\gamma)=F_2(f_\gamma)(\xi',x_3,\xi_0)
$$

\begin{definition}(see \cite{ZZ1}, \cite[Sect. 2,3,1]{Tr1}),\label{d2.3}
Let $p,q\in[1,\infty]$, $\sigma\in\R+$. The anisotropic Besov space $B_{p,q,\gamma}^{\sigma,\sigma/2}(\R^3\times\R)$ is the space of functions \\ $u=u(x,t)\in S'(\R^4)$ with the finite norm
$$
\|u\|_{B_{p,q,\gamma}^{\sigma,\sigma/2}(\R^3\times\R)}=\bigg[\sum_{k=0}^\infty\bigg( \intop_{\R^3\times\R}|2^{\sigma k}(F^{-1}(\varphi_kFu_\gamma)(x,t)|^pdxdt\bigg)^{q/p}\bigg]^{1/q},
$$
where the Fourier transform $F$ is defined above and $\varphi_k\in\Phi_a(\R^4)$ is introduced in Definition \ref{d2.1}.
\end{definition}

\begin{definition} (see \cite{ZZ1}, \cite[Sect. 2,3,1]{Tr1})\label{d2.4}
Let $p,q\in[1,\infty]$, $\sigma\in\R_+$. The anisotropic Besov space $B_{p,q,\gamma}^{\sigma,\sigma/2}(\R^2\times\R)$ is the space of functions \\ $u=u(x',t)\in S'(\R^3)$ with the finite norm
$$
\|u\|_{B_{p,q,\gamma}^{\sigma,\sigma/2}(\R^2\times\R)}=\bigg[\sum_{k=0}^\infty\bigg( \intop_{\R^2\times\R}|2^{\sigma k}(F_1^{-1}(\varphi_kF_1u_\gamma))(x',t)|^pdx'dt\bigg)^{q/p}\bigg]^{1/q},
$$
where $F_2$ and $\varphi_k$, $k\in\N_0$, are defined above.
\end{definition}

\begin{definition}[Besov spaces defined on $\R_+^3\times\R$]\label{d2.5}
Let $p,q\in[1,\infty]$ and $\sigma\in\R_+$. The anisotropic Besov space $B_{p,q,\gamma}^{\sigma,\sigma/2}(\R_+^3\times\R)$ is the space of functions $u=u(x,t)\in S'(\R_+^3\times\R)$ with the finite norm
$$\eqal{
&\|u\|_{B_{p,q,\gamma}^{\sigma,\sigma/2}(\R_+^3\times\R)}=\|u\|_{L_p(\R_+^3\times\R)}\cr
&\quad+\bigg[\sum_{k=0}^\infty\bigg(\sum_{j\le[\sigma]}\intop_{\R_+^3\times\R} |2^{(\sigma-j)k}(F_2^{-1}\varphi_kF_2\partial_{x_3}^ju_\gamma)(x,t)|^pdxdt\bigg)^{q/p} \bigg]^{1/q}\cr
&\quad+\bigg[\sum_{k=0}^\infty\bigg(\intop_\R dt\intop_{\R_+}dx_3\intop_{\R_+}dz\intop_{\R^2}dx'\cdot\cr
&\qquad\cdot{|F_2^{-1}\varphi_kF_2(\partial_{x_3}^{[\sigma]}u_\gamma(\bar x',x_3+z)-\partial_{x_3}^{[\sigma]}u_\gamma(\bar x',x_3))|^p\over z^{1+p(\sigma-[\sigma])}}\bigg)^{q/p}\bigg]^{1/q},\cr}
$$
where $[\sigma]$ is the integer part of $\sigma$.
\end{definition}

\begin{definition}\label{d2.6}
By $B_{p,q,\gamma}^{\sigma,\sigma/2}$ we denote such Besov space that in the definition of $B_{p,q}^{\sigma,\sigma/2}$ the Fourier transform is replaced by the Fourier-Laplace transform.
\end{definition}

\begin{lemma}[see Ch. 4, Sect. 18 \cite{BIN}]\label{l2.7}
Let $f\in B_{p,q}^{\sigma,\sigma/2}(\R_+^3\times\R)$, $\sigma\in\R_+$, $p,q\in(1,\infty)$, $\R_+^3=\{x\in\R^3\colon x_3>0\}$.\\
Then there exists an extension of $f$ onto $\R^3\times\R$ denoted by $f'$ such that $f'|_{\R_+^3\times\R}=f$ and
$$
\|f'\|_{B_{p,q}^{\sigma,\sigma/2}(\R^3\times\R)}\le c\|f\|_{B_{p,q}^{\sigma,\sigma/2}(\R_+^3\times\R)},
$$
where $c$ does not depend on $f$.
\end{lemma}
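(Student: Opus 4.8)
\emph{Proof proposal.}
The plan is to prove this exactly as the half‑space extension theorem of \cite[Ch.~4, Sect.~18]{BIN}, of which it is the parabolic analogue. Since the extension is performed across the \emph{spatial} hyperplane $\{x_3=0\}$, which is parallel to the $t$‑axis, the anisotropic scaling $(\sigma,\sigma/2)$ never enters and the construction is literally the isotropic, stationary one; this is why only a sketch is needed. Concretely I would: (i) build a Hestenes‑type higher‑order even reflection $f\mapsto f'$; (ii) observe that the tangential Fourier transform $F_2$ occurring in Definitions \ref{d2.3} and \ref{d2.5} commutes with that reflection, so that the Littlewood--Paley pieces of $f'$ are controlled by those of $f$; and (iii) estimate, one family at a time, the $L_p$ term, the terms carrying the normal derivatives $\partial_{x_3}^jf'$ with $j\le[\sigma]$, and the normal finite‑difference seminorm of $\partial_{x_3}^{[\sigma]}f'$, against the corresponding terms of $\|f\|_{B_{p,q}^{\sigma,\sigma/2}(\R_+^3\times\R)}$. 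I would take as known (it is of the same nature as the equivalence of the Triebel and Besov definitions discussed in the Introduction, and is also contained in \cite[Ch.~4, Sect.~18]{BIN}) that on the whole space the Fourier‑analytic norm of Definition \ref{d2.3} is equivalent, with two‑sided constants independent of $f$, to the intrinsic tangential‑plus‑normal norm given by the $\R^3\times\R$ analogue of Definition \ref{d2.5}; it is the latter that the reflection handles transparently.

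For the construction I would fix $N=[\sigma]+2$ and take $(c_1,\dots,c_N)$ to be the unique solution of the Vandermonde system $\sum_{k=1}^N c_k(-k)^j=1$, $j=0,1,\dots,N-1$, and set
$$
f'(x',x_3,t)=\left\{\eqal{&f(x',x_3,t)\quad &{\rm for}\ \ x_3>0,\cr
&\sum_{k=1}^Nc_kf(x',-kx_3,t)\quad &{\rm for}\ \ x_3<0.\cr}\right.
$$
The matching conditions guarantee that, for $f$ smooth up to $\{x_3=0\}$, the function $f'$ and all $\partial_{x_3}^jf'$ with $j\le N-1=[\sigma]+1$ are continuous across $\{x_3=0\}$; for general $f$ this is the mechanism making $f\mapsto f'$ bounded on the Besov scale. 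The $L_p$ bound is then immediate: substituting $y_3=-kx_3$ in $\{x_3<0\}$ gives $\|f'\|_{L_p(\R^3\times\R)}\le\big(1+\sum_{k=1}^N|c_k|k^{-1/p}\big)\|f\|_{L_p(\R_+^3\times\R)}$. For the derivative/Littlewood--Paley family I would use that $F_2$ acts only in $(x',t)$, hence commutes both with $\partial_{x_3}$ and with the dilations $x_3\mapsto-mx_3$; thus for $x_3<0$
$$
(F_2^{-1}\varphi_kF_2\partial_{x_3}^jf')(x',x_3,t)=\sum_{m=1}^Nc_m(-m)^j(F_2^{-1}\varphi_kF_2\partial_{x_3}^jf)(x',-mx_3,t),
$$
and, raising to the power $p$, integrating over $\R^3\times\R$, changing the $x_3$‑variable, weighting by $2^{(\sigma-j)k}$ and summing in $k$ in $\ell^{q/p}$ (the sum over the $N$ reflection terms being finite, the triangle/quasi‑triangle inequality suffices), these terms are dominated by $c\|f\|_{B_{p,q}^{\sigma,\sigma/2}(\R_+^3\times\R)}$.

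The step I expect to be the genuine obstacle is the last term of Definition \ref{d2.5}, the normal finite‑difference seminorm of $\partial_{x_3}^{[\sigma]}f'$. I would split the $(x_3,z)$‑integration into the region where $x_3$ and $x_3+z$ lie on the same side of $\{x_3=0\}$ — which, after the dilation $x_3\mapsto-mx_3$ and using again that $F_2^{-1}\varphi_kF_2$ commutes with this dilation, reduces term by term to the corresponding seminorm of $f$ — and the ``straddling'' region, where $x_3$ and $x_3+z$ lie on opposite sides. In the straddling region one must exploit that the reflection matches the normal derivatives up to order $[\sigma]+1$ in order to rewrite the straddling increment of $\partial_{x_3}^{[\sigma]}f'$ as a finite combination of one‑sided increments of $\partial_{x_3}^{[\sigma]}f$ together with increments of lower‑order normal derivatives, the latter being absorbed by the weight $z^{-1-p(\sigma-[\sigma])}$; the homogeneity of that weight under $z\mapsto mz$ is what keeps the constants uniform. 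Combining (i)--(iii) and invoking the equivalence of norms quoted above yields $\|f'\|_{B_{p,q}^{\sigma,\sigma/2}(\R^3\times\R)}\le c\|f\|_{B_{p,q}^{\sigma,\sigma/2}(\R_+^3\times\R)}$ with $c=c(p,q,\sigma)$ independent of $f$, which is the assertion of Lemma \ref{l2.7}.
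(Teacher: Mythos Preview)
The paper does not give its own proof of this lemma: it is stated with the attribution ``see Ch.~4, Sect.~18 \cite{BIN}'' and no argument follows. So there is no paper proof to compare against; the lemma is quoted as a known result from Besov--Il'in--Nikolskii.

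Your proposal is the standard Hestenes higher-order reflection argument, and it is the right approach --- indeed it is essentially the construction carried out in \cite[Ch.~4, Sect.~18]{BIN} for isotropic Besov spaces, transported here verbatim because the extension is across the spatial hyperplane $\{x_3=0\}$ and the tangential Fourier transform $F_2$ commutes with reflections in $x_3$. Your identification of the straddling region in the finite-difference seminorm as the only nontrivial step is accurate, and the mechanism you describe (matching of normal derivatives up to order $[\sigma]+1$, rewriting the cross-boundary increment as a combination of one-sided increments) is the correct one. One small refinement: rather than invoking an equivalence of the Fourier norm of Definition~\ref{d2.3} with a half-space-style norm on all of $\R^3\times\R$, it is cleaner to work throughout with the difference-based norm of Definition~\ref{d2.11} (or equivalently \eqref{2.4}), whose equivalence with the Fourier norm is exactly Lemma~\ref{l2.12} and Remark~\ref{r2.18}; the reflection then acts directly on $\Delta_3^m(h)$ and no separate ``tangential-plus-normal'' decomposition is needed.
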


\begin{lemma}[see \cite{N}]\label{l2.8}
Let $f=f(x,t)\in B_{p,q}^{\sigma,\sigma/2}(\Omega\times\R_+)$, $\sigma\in\R_+$, $\sigma>2/p$, $p,q\in(1,\infty)$, $x\in\Omega$, $t\in\R_+$, where $\Omega$ stands for either $\R_+^3$ or $\R^3$.\\
Then $f|_{t=0}=\varphi\in B_{p,q}^{\sigma-2/p}(\Omega)$ and
$$
\|\varphi\|_{B_{p,q}^{\sigma-2/p}(\Omega)}\le c\|f\|_{B_{p,q}^{\sigma,\sigma/2}(\Omega\times\R_+)},
$$
where $c$ does not depend on $f$.
\end{lemma}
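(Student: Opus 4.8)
The plan is to reduce the assertion to the full-space situation and then run the standard anisotropic Littlewood--Paley argument for the trace on the hyperplane $\{t=0\}$.

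\emph{Step 1: removing the boundaries.} Using a Hestenes-type reflection in $t$ across $t=0$ (or simply the definition of $B_{p,q}^{\sigma,\sigma/2}(\Omega\times\R_+)$ as a restriction space), I would first extend $f$ to $\tilde f\in B_{p,q}^{\sigma,\sigma/2}(\Omega\times\R)$ with $\|\tilde f\|_{B_{p,q}^{\sigma,\sigma/2}(\Omega\times\R)}\le c\|f\|_{B_{p,q}^{\sigma,\sigma/2}(\Omega\times\R_+)}$ and $\tilde f|_{\Omega\times\R_+}=f$; since the reflection acts only in $t$ it does not alter $f|_{t=0}$. If $\Omega=\R^3_+$, apply Lemma \ref{l2.7} (acting in the $x_3$ variable) to extend further to $F\in B_{p,q}^{\sigma,\sigma/2}(\R^3\times\R)$ of comparable norm; being an extension in a spatial variable it again leaves the trace at $t=0$ unchanged. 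Thus it suffices to prove, for $F\in B_{p,q}^{\sigma,\sigma/2}(\R^3\times\R)$, that $F|_{t=0}\in B_{p,q}^{\sigma-2/p}(\R^3)$ with the corresponding bound, and then restrict back to $\Omega$.

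\emph{Step 2: the temporal sampling inequality.} Write the dyadic decomposition $F=\sum_{k\ge0}F_k$, $F_k=F^{-1}(\varphi_kFF)$ with $\varphi_k\in\Phi_a(\R^4)$, and set $a_k:=2^{\sigma k}\|F_k\|_{L_p(\R^3\times\R)}$, so that $\|(a_k)\|_{\ell^q}=\|F\|_{B_{p,q}^{\sigma,\sigma/2}(\R^3\times\R)}$. By $2^\circ$ of Definition \ref{d2.1}, $\widehat{F_k}$ is supported in $\{|\bar\xi|_a\le2^{k+1}\}$, which gives two facts simultaneously: the temporal frequency obeys $|\xi_0|\le2^{2k+2}$, and the spatial frequency obeys $|\xi|\le2^{k+1}$. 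Being band-limited in $\xi_0$, $F_k$ has a representative that is smooth in $t$ with values in $L_p(\R^3)$, so $F_k(\cdot,0)$ is meaningful. Writing $F_k(x,\cdot)=F_k(x,\cdot)*\psi_k$ for a fixed Schwartz $\psi$ rescaled so that $\widehat{\psi_k}\equiv1$ on $\{|\xi_0|\le2^{2k+2}\}$, and using $\|\psi_k\|_{L_{p'}(\R)}\le c\,2^{2k/p}$, a one-dimensional Nikolskii inequality in $t$ followed by integration in $x$ yields
$$
\|F_k(\cdot,0)\|_{L_p(\R^3)}\le c\,2^{2k/p}\|F_k\|_{L_p(\R^3\times\R)}=c\,2^{-(\sigma-2/p)k}a_k .
$$
Put $s:=\sigma-2/p>0$. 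Then $\varphi:=F|_{t=0}=\sum_k F_k(\cdot,0)$ is a series whose $L_p(\R^3)$-norms are dominated by $c\,2^{-sk}a_k$, hence it converges absolutely in $L_p(\R^3)$ (as $s>0$ and $(a_k)\in\ell^q\subset\ell^\infty$), and a routine density argument identifies $\varphi$ with $\lim_{t\to0}F(\cdot,t)$.

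\emph{Step 3: Besov synthesis.} Finally I would invoke the standard fact: if $g=\sum_k g_k$ with $\widehat{g_k}$ supported in the ball $\{|\xi|\le2^{k+1}\}$, $\|g_k\|_{L_p(\R^3)}\le M_k$ and $(2^{sk}M_k)_k\in\ell^q$ with $s>0$, then $g\in B_{p,q}^s(\R^3)$ and $\|g\|_{B_{p,q}^s(\R^3)}\le c\|(2^{sk}M_k)\|_{\ell^q}$; the proof is the usual one, since a spatial dyadic annulus $|\xi|\sim2^j$ meets only the $g_k$ with $k\ge j-1$, so the $j$-th Besov piece of $g$ is controlled by $\sum_{k\ge j-1}M_k$, and the ensuing discrete convolution with the summable kernel $2^{-s\ell}\mathbf 1_{\ell\ge-1}$ is bounded on $\ell^q$ by Young's inequality. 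Applying this with $g=\varphi$, $g_k=F_k(\cdot,0)$, $M_k=c\,2^{-sk}a_k$ (so that $2^{sk}M_k=c\,a_k$) gives
$$
\|\varphi\|_{B_{p,q}^{\sigma-2/p}(\R^3)}\le c\,\|(a_k)\|_{\ell^q}=c\,\|F\|_{B_{p,q}^{\sigma,\sigma/2}(\R^3\times\R)}\le c\,\|f\|_{B_{p,q}^{\sigma,\sigma/2}(\Omega\times\R_+)},
$$
and restricting $\varphi$ to $\Omega$ finishes the proof. The one genuinely delicate point is the anisotropic bookkeeping: the annulus $2^{k-1}\le|\bar\xi|_a\le2^{k+1}$ must at once supply the temporal band $|\xi_0|\lesssim2^{2k}$ for the Nikolskii step and the spatial ball $|\xi|\le2^{k+1}$ for the synthesis step, and it is exactly the parabolic weight $2$ on $\xi_0$ that produces the loss $2/p$ (rather than $1/p$) and makes the hypothesis $\sigma>2/p$ indispensable.
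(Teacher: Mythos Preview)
The paper does not supply its own proof of Lemma~\ref{l2.8}; it is stated with the attribution ``see \cite{N}'' and taken as a known trace theorem for anisotropic Besov spaces, so there is nothing to compare against directly.

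Your argument is correct and is essentially the standard Littlewood--Paley proof of this trace result. The reduction to $\R^3\times\R$ via extension is routine; the temporal Nikolskii step is justified because the support condition $|\bar\xi|_a\le 2^{k+1}$ indeed gives $|\xi_0|\le 2^{2(k+1)}$, and the parabolic weight is exactly what produces the loss $2/p$; and the spatial Fourier support of $F_k(\cdot,0)$ lies in the ball $\{|\xi|\le 2^{k+1}\}$ since the trace at $t=0$ corresponds to integrating $\widehat{F_k}(\xi,\xi_0)$ in $\xi_0$. The synthesis lemma you invoke (ball-supported pieces with $\ell^q$ control on $2^{sk}M_k$, $s>0$, imply $B_{p,q}^s$) is a standard consequence of Young's inequality for the discrete convolution with $(2^{-s\ell})_{\ell\ge -c}$. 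The classical references such as \cite{N} typically phrase the proof via differences and moduli of smoothness rather than dyadic decompositions, but by Lemma~\ref{l2.12} and Remark~\ref{r2.18} the two formulations are equivalent, so your approach is an acceptable modern rendering of the cited result.
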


\begin{lemma}[see \cite{N}]\label{l2.9}
Let $\varphi^{(k)}\in B_{p,q} ^{\sigma-2/p-2k}(\Omega)$, where $\Omega$ stands for either $\R_+^3$ or $\R^3$, and
$$
k=0,1,\dots,l,\quad l=
\begin{cases}
\sigma/2-1/p-1&\textrm{ if }{\sigma\over2}-{1\over p}\in\N,\cr
\left[{\sigma\over2}-{1\over p}\right]&\textrm{ if }{\sigma\over2}-{1\over p}\not\in\N.\cr
\end{cases}
$$
Then there exists a function $f=f(x,t)\in B_{p,q}^{\sigma,\sigma/2}(\Omega\times\R)$ such that \\ $\partial_t^kf|_{t=0}=\varphi^{(k)}$, $k=0,1,\dots,l$, and
$$
\|f\|_{B_{p,q}^{\sigma,\sigma/2}(\Omega\times\R_+)}\le c\sum_{k=0}^l\|\varphi^{(k)}\|_{B_{p,q}^{\sigma-2/p-2k}(\Omega)},
$$
where $c$ does not depend on $f$.
\end{lemma}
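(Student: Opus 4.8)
The plan is to prove the statement first for $\Omega=\R^3$ and then deduce the case $\Omega=\R_+^3$ from the extension Lemma \ref{l2.7}. For $\Omega=\R^3$ the extension is explicit. Writing $\widehat g={\cal F}_xg$ for the Fourier transform in the space variables only and $\Lambda(\xi)=(1+|\xi|^2)^{1/2}$, I would fix once and for all a profile $\chi\in S(\R)$ with $\chi(0)=1$ and $\chi^{(i)}(0)=0$ for $1\le i\le l$ (such $\chi$ exists, e.g. by prescribing finitely many vanishing moments of $\widehat\chi\in C_0^\infty(\R)$), set $\Theta_k(s)=\frac{s^k}{k!}\chi(s)\in S(\R)$, and define
$$
f(x,t)=\sum_{k=0}^l{\cal F}_x^{-1}\Big[\Lambda(\xi)^{-2k}\,\Theta_k(\Lambda(\xi)^2t)\,\widehat{\varphi^{(k)}}(\xi)\Big](x).
$$
A direct computation using $\partial_t=\Lambda^2\partial_s$ at frequency $\xi$ (with $s=\Lambda^2t$) together with $\Theta_k^{(j)}(0)=\delta_{jk}$ for $0\le j,k\le l$ (exactly the defining property of $\chi$, by Leibniz) gives, pointwise in $\xi$, $\partial_t^j\widehat f(\xi,0)=\sum_k\Lambda^{2(j-k)}\delta_{jk}\widehat{\varphi^{(k)}}=\widehat{\varphi^{(j)}}$, so $\partial_t^jf|_{t=0}=\varphi^{(j)}$ for $j=0,\dots,l$. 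Thus the trace conditions hold by construction, $f$ is defined on $\R^3\times\R$, and it remains only to prove the norm estimate. Note that $\sigma-2/p-2k>0$ for all $k\le l$ by the definition of $l$, so every target space $B_{p,q}^{\sigma-2/p-2k}(\R^3)$ has positive smoothness.

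By linearity it suffices to bound each summand, i.e. to show for fixed $k$ that $\varphi\mapsto f_k:={\cal F}_x^{-1}[\Lambda^{-2k}\Theta_k(\Lambda^2t)\widehat\varphi]$ sends $B_{p,q}^{\sigma-2/p-2k}(\R^3)$ into $B_{p,q}^{\sigma,\sigma/2}(\R^3\times\R_+)$ boundedly. The mechanism is a diagonal correspondence between the isotropic spatial dyadic decomposition $\varphi=\sum_m\Delta_m^x\varphi$ (with multiplier $\psi_m$, frequency $|\xi|\sim2^m$) and the anisotropic parabolic decomposition $\Delta_j^a$ of Definition \ref{d2.3} (frequency $(|\xi_0|+|\xi|^2)^{1/2}\sim2^j$): since $\Theta_k$ is Schwartz, the factor $\Theta_k(\Lambda^2t)$ with $\Lambda\sim2^m$ is concentrated in time on the scale $t\sim2^{-2m}$, so its time-frequency content sits at $|\xi_0|\lesssim2^{2m}$ and a spatial block of level $m$ feeds only anisotropic blocks of level $j=m+O(1)$. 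The key single-block bound is
$$
\Big(\intop_{\R_+}\|f_k^{(m)}(\cdot,t)\|_{L_p(\R^3)}^p\,dt\Big)^{1/p}\le c\,2^{-2km}\,2^{-2m/p}\,\|\Delta_m^x\varphi\|_{L_p(\R^3)},
$$
where $f_k^{(m)}={\cal F}_x^{-1}[\Lambda^{-2k}\Theta_k(\Lambda^2t)\psi_m\widehat\varphi]$: the factor $2^{-2km}$ comes from $\Lambda^{-2k}\sim2^{-2km}$, the factor $2^{-2m/p}$ from $\intop_{\R_+}|\Theta_k(2^{2m}t)|^p\,dt=2^{-2m}\|\Theta_k\|_{L_p}^p$, and the uniform passage from multiplier to $\|\Delta_m^x\varphi\|_{L_p}$ is a Mikhlin--Marcinkiewicz estimate for the $\xi$-multiplier $\Lambda^{-2k}\Theta_k(\Lambda^2t)\psi_m$, whose $\xi$-derivatives obey $2^{-m|\beta|}$ bounds uniformly in $t$. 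Multiplying by the target weight $2^{\sigma m}$ reproduces precisely $2^{(\sigma-2k-2/p)m}\|\Delta_m^x\varphi\|_{L_p}$, the summand of the source norm $\|\varphi\|_{B_{p,q}^{\sigma-2/p-2k}(\R^3)}$.

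The remaining terms of the norm in Definition \ref{d2.5} yield to the same bookkeeping: each $\partial_{x_3}^a$ or $\partial_t^b$ falling on $f_k^{(m)}$ produces a factor $2^{am}$ or $2^{2bm}$ (again the $t$-integrals converge since $\Theta_k$ and all its derivatives are Schwartz), and the Slobodetskii finite-difference seminorm in $x_3$ contributes the fractional power $2^{(\sigma-[\sigma])m}$; in every case the powers of $2^m$ collect to the single matched exponent $2^{(\sigma-2k-2/p)m}$. Summing in $\ell^q$ over dyadic levels and using that each spatial level $m$ meets only finitely many anisotropic levels $j$, a bounded-overlap reindexing (Young's inequality on the index $\ell^q$) gives $\|f_k\|_{B_{p,q}^{\sigma,\sigma/2}(\R^3\times\R_+)}\le c\|\varphi\|_{B_{p,q}^{\sigma-2/p-2k}(\R^3)}$, and summing over $k=0,\dots,l$ settles the case $\Omega=\R^3$. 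For $\Omega=\R_+^3$ I would extend each $\varphi^{(k)}$ to $\R^3$ by Lemma \ref{l2.7}, apply the above, and restrict $f$ to $\R_+^3\times\R$; restriction does not increase the norm and commutes with the $t=0$ trace, so both the estimate and the trace identities persist.

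The main obstacle is the single-block analysis together with its assembly: one must establish the Mikhlin--Marcinkiewicz multiplier bounds for $\Lambda^{-2k}\Theta_k(\Lambda^2t)\psi_m$ and for its $x_3$- and $t$-derivatives \emph{uniformly in $t$ and $m$}, and then verify that every constituent of the somewhat intricate norm of Definition \ref{d2.5} -- in particular the fractional $x_3$-difference term -- collapses onto the same exponent $2^{(\sigma-2k-2/p)m}$, so that the diagonal spatial-to-anisotropic correspondence indeed transfers the source Besov norm to the target norm with a constant independent of $\varphi$.
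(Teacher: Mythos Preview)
The paper does not prove Lemma~\ref{l2.9}; it is stated with the citation ``see \cite{N}'' and used as a known inverse-trace result from Nikolskii's book, with no argument given. So there is no paper proof to compare against.

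Your construction is the standard one for parabolic inverse-trace theorems and is correct in outline. The explicit extension $f=\sum_k{\cal F}_x^{-1}[\Lambda^{-2k}\Theta_k(\Lambda^2t)\widehat{\varphi^{(k)}}]$ with $\Theta_k(s)=\tfrac{s^k}{k!}\chi(s)$ and $\chi\in S(\R)$ satisfying $\chi^{(i)}(0)=\delta_{i0}$ for $0\le i\le l$ is exactly the device used in the classical references (Nikolskii, Triebel); your verification of $\partial_t^jf|_{t=0}=\varphi^{(j)}$ via $\Theta_k^{(j)}(0)=\delta_{jk}$ is correct. The norm estimate by matching the spatial dyadic level $m$ (where $\Lambda\sim2^m$) to the anisotropic level $j\sim m$ through the time-localization scale $2^{-2m}$ of $\Theta_k(\Lambda^2t)$ is the right mechanism, and the arithmetic $2^{\sigma m}\cdot2^{-2km}\cdot2^{-2m/p}=2^{(\sigma-2/p-2k)m}$ is the heart of why the exponents match. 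The reduction of $\Omega=\R_+^3$ to $\Omega=\R^3$ via Lemma~\ref{l2.7} and restriction is also standard.

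Two minor points to tighten. First, you invoke Definition~\ref{d2.5} (the half-space norm with explicit $x_3$-differences) while working on $\R^3\times\R$, where the relevant norm is that of Definition~\ref{d2.3} (pure anisotropic Littlewood--Paley); it is cleaner to estimate directly in the latter, since your diagonal correspondence $m\leftrightarrow j$ already lives there, and then appeal to equivalence (Remark~\ref{r2.18} or Lemma~\ref{l2.12}) if needed. Second, in the Mikhlin step the multiplier $\Lambda(\xi)^{-2k}\Theta_k(\Lambda(\xi)^2t)\psi_m(\xi)$ depends on $t$ through $\Lambda(\xi)^2t$, so the ``uniform in $t$'' bound you need is really a bound uniform in the dimensionless parameter $s=\Lambda^2t$; since $\Theta_k$ and all its derivatives are Schwartz in $s$, this is immediate, but it is worth saying explicitly rather than ``uniformly in $t$ and $m$''.
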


Let $\Omega$ be either $\R^3$ or $\R_+^3$. In Definitions \ref{d2.4} and \ref{d2.5} the Besov spaces are defined by using Fourier transforms. Applying differences we can define the Besov space in more classical way (see papers \cite{BIN}, \cite{N} of Besov and Nikolskii).

To define the spaces we introduce differences. Let $x,z\in\Omega$, $t\in\R^1$. Then for $\N\ni m>1$ we set
\begin{equation}\eqal{
&\Delta_i(t)f(x)=f(x+te_i)-f(x),\cr
&\Delta_i^m(t)f(x)=\Delta_i(t)[\Delta_i^{m-1}(t)f(x)]=\sum_{j=0}^m(-1)^{m-j}c_{jm}f(x+jte_i)\cr
&\Delta(z)f(x)=f(x+z)-f(x),\cr
&\Delta^m(z)f(x)=\Delta(z)[\Delta^{m-1}(z)f(x)]=\sum_{j=0}^m(-1)^{m-j}c_{jm}f(x+jz),\cr}
\label{2.1}
\end{equation}
where $c_{jm}={m\choose j}$ and $e_i$ is the unit vector directed along the axis $x_i.$

\begin{definition}[see \cite{BIN}, \cite{N}]\label{d2.10}
The Besov space $B_{p,q}^\sigma(\Omega)$, where $\Omega$ stands either for $\R^3$ or $\R_+^3$, is a space of functions $u=u(x)$ with the finite norm
$$
\|u\|_{B_{p,q}^\sigma(\Omega)}=\|u\|_{L_p(\Omega)}+\sum_{i=1}^3\bigg(\intop_0^{h_0} {\|\Delta_i^m(h,\Omega)\partial_{x_i}^ku\|_{L_p(\Omega)}^q\over h^{1+q(\sigma-k)}}dh\bigg)^{1/q},
$$
where $x=(x_1,x_2,x_3)$, $m>\sigma-k$, $k\in\N_0=\N\cup\{0\}$, $\sigma\in\R_+$ and $\sigma>k$.

Moreover, we assumed that
\begin{equation}
\Delta_i^m(h,\R_+^3)u(x)=\begin{cases}\Delta_i^m(h)u(x)&if \ \ [x,x+hme_i]\in\R_+^3,\cr 0&\rm otherwise.\cr
\end{cases}
\label{2.2}
\end{equation}
Now, let
\begin{equation}
\Delta_t^{m_0}(h)u(t)=\sum_{j=0}^{m_0}(-1)^{m_0-j}c_{jm_0}u(t+jh)
\label{2.3}
\end{equation}
and
$$
\Delta_t^{m_0}(h,(\tau,T))u(t)=\begin{cases}
\Delta_t^{m_0}(h)u(t)&if [t,t+m_0h]\subset(\tau,T),\cr 0&\rm otherwise,\cr
\end{cases}
$$
where $m_0\in\N$ and $c_{jm_0}$ is defined above.
\end{definition}

\begin{definition}[see \cite{BIN}, \cite{N}]\label{d2.11}
The Besov space $B_{p,q}^{\sigma,\sigma/2}(\Omega\times(\tau,T))$, where $-\infty<\tau<T<\infty$, is a space of functions $u=u(x,t)$ with the finite norm
$$\eqal{
&\|u\|_{B_{p,q}^{\sigma,\sigma/2}(\Omega\times(\tau,T))}= \|u\|_{L_p(\Omega\times(\tau,T))}\cr
&\quad+\sum_{i=1}^3\bigg(\intop_0^{h_0} {\|\Delta_i^m(h,\Omega)\partial_{x_i}^{k_i}u\|_{L_p(\Omega\times(\tau,T)}^q\over h^{1+q(\sigma-k)}}dh\bigg)^{1/q}\cr
&\quad+\bigg(\intop_0^{h_0}{\|\Delta_t^{m_0}(h,(\tau,T)) \partial_t^{k_0}u\|_{L_p(\Omega\times(\tau,T))}^q\over h^{1+q(\sigma/2-k_0)}}dh\bigg)^{1/q},\cr}
$$
where $h_0>0$, $m>\sigma-k$, $m_0>\sigma/2-k_0$, $k,k_0\in\N_0$, $\sigma\in\R_+$, $\sigma>k$ and $\sigma/2>k_0$.
\end{definition}

From \cite{G} and Theorem 18.2 from \cite{BIN}, Ch. 4, Sect. 18 we have

\begin{lemma}\label{l2.12}
Norms of spaces $B_{p,q}^{\sigma,\sigma/2}(\Omega\times(\tau,T))$ are equivalent for any open set in $\Omega\times(\tau,T)$ and for any $m,k$ and $m_0,k_0$ satisfying conditions $m+k>\sigma>k>0$, $m_0+k_0>\sigma/2>k_0>0$.
\end{lemma}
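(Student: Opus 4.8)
The plan is to reduce the equivalence of the various Besov-norm definitions to the two classical results already cited: the comparison of the Fourier-analytic (Triebel) definition with the difference-based (Besov--Nikolskii) definition given by Grisvard \cite{G}, and Theorem~18.2 of \cite{BIN}, which asserts that the difference-based norm is independent of the admissible parameters $m,k,m_0,k_0$ and of the shape of the underlying open set. So the proof is essentially a matter of assembling these two ingredients in the right order rather than proving anything from scratch.

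First I would fix $\sigma\in\R_+$, $p,q\in(1,\infty)$, and an open set $G\subset\Omega\times(\tau,T)$, and consider two admissible parameter choices $(m,k)$, $(m_0,k_0)$ and $(\tilde m,\tilde k)$, $(\tilde m_0,\tilde k_0)$ with $m+k>\sigma>k>0$, $m_0+k_0>\sigma/2>k_0>0$ and likewise for the tilded ones. The goal is a two-sided inequality between $\|u\|_{B_{p,q}^{\sigma,\sigma/2}(G)}$ computed with one set of parameters and the same norm computed with the other. The spatial part and the temporal part in Definition~\ref{d2.11} decouple in the sense that the spatial seminorm involves only $\Delta_i^m(h,\Omega)\partial_{x_i}^{k_i}u$ and the temporal seminorm only $\Delta_t^{m_0}(h,(\tau,T))\partial_t^{k_0}u$, so one can treat the two independently. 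For the spatial seminorm one invokes Theorem~18.2 of \cite{BIN} applied in the $x$-variables (with $t$ as a parameter, then integrating in $t$ via Fubini and the $L_p$ structure), which gives equivalence of $\sum_i(\int_0^{h_0}\|\Delta_i^m(h,\Omega)\partial_{x_i}^{k}u\|_{L_p}^q h^{-1-q(\sigma-k)}dh)^{1/q}$ for different admissible $(m,k)$; the analogous statement in the $t$-variable handles the temporal seminorm. Combining the two and adding back the common $L_p(G)$ term yields the equivalence of the difference-based norms for all admissible parameters.

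Next I would bring in the set-independence: Theorem~18.2 of \cite{BIN} also states that for a Lipschitz (or, in the cylindrical case, admissible) domain the seminorm built from truncated differences $\Delta_i^m(h,\Omega)$ restricted to segments lying inside $\Omega$ is equivalent to the one built on the whole space after extension, hence the norm on $G$ depends, up to equivalence, only on $G$ through the standard extension theorem (our Lemma~\ref{l2.7} is exactly the tool for $\R^3_+$). This is what lets us say ``for any open set in $\Omega\times(\tau,T)$''. Finally, to connect with the Fourier-analytic Definitions~\ref{d2.3}--\ref{d2.5} that were used in the half-space analysis, I would cite \cite{G}: for $G=\R^3\times\R$ or $G=\R^3_+\times\R$ Grisvard's result identifies the Littlewood--Paley/Triebel norm with the difference norm for a single admissible parameter choice, and then the parameter-independence just proved propagates this identification to every admissible choice. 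Assembling these, the norms in Definitions~\ref{d2.3}, \ref{d2.4}, \ref{d2.5}, \ref{d2.11} all agree up to constants.

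The main obstacle I anticipate is the bookkeeping at the boundary and at the corner: the truncated differences $\Delta_i^m(h,\R^3_+)$ in \eqref{2.2} vanish on segments that poke out of the domain, and one must be careful that the equivalence constants in the cited theorems survive the passage to a cylindrical $\Omega$ whose boundary is only piecewise smooth (the edges $L_1,L_2$). Strictly, \cite{BIN}, Thm.~18.2 and Grisvard's theorem are stated for smooth or Lipschitz domains; since our $\Omega$ is Lipschitz (the faces meet at a right angle) this is fine, but it should be noted explicitly. A second, more technical point is the anisotropy in time (the exponent $\sigma/2$, weight $h^{1+q(\sigma/2-k_0)}$): one must check that the one-dimensional difference characterization in $t$ with half the smoothness is covered by the cited results — it is, because the parabolic anisotropy is handled by the same Besov interpolation machinery, but the reader should be pointed to the relevant statements. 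Beyond these caveats the proof is a citation assembly and contains no genuinely new estimate.
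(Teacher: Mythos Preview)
Your proposal is correct and takes essentially the same approach as the paper: the paper's ``proof'' of Lemma~\ref{l2.12} consists entirely of the sentence preceding the lemma, namely a direct citation of \cite{G} and Theorem~18.2 of \cite[Ch.~4, Sect.~18]{BIN}. You have simply unpacked how those two references combine, which is more than the paper itself does.
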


Lemma \ref{l2.12} implies that the norm of $B_{p,q}^{\sigma,\sigma/2}(\Omega\times(\tau,T))$ from Definition \ref{d2.11} is equivalent to the following:
\begin{equation}\eqal{
&\|u\|_{B_{p,q}^{\sigma,\sigma/2}(\Omega\times(\tau,T))}= \|u\|_{L_p(\Omega\times(\tau,T))}\cr
&\quad+\sum_{i=1}^3\bigg(\intop_0^{h_0} {\|\Delta_i(h,\Omega)\partial_{x_i}^{[\sigma]}u\|_{L_p(\Omega\times(\tau,T))}^q\over h^{1+q(\sigma-[\sigma])}}dh\bigg)^{1/q}\cr
&\quad+\bigg(\intop_0^{h_0} {\|\Delta_t(h,(\tau,T))\partial_t^{[\sigma/2]}u\|_{L_p(\Omega\times(\tau,T))}^q\over h^{1+q(\sigma/2-[\sigma/2])}}dh\bigg)^{1/q}.\cr}
\label{2.4}
\end{equation}
From Lemma 7.44 from \cite{A} we have

\begin{lemma}\label{l2.13}
Let $p=q$. Then the norm (\ref{2.4}) is equivalent to the following
\begin{equation}\eqal{
&\|u\|_{\tilde B_{p,p}^{\sigma,\sigma/2}(\Omega\times(\tau,T))}= \|u\|_{L_p(\Omega\times(\tau,T))}\cr
&\quad+\bigg(\intop_\tau^Tdt\intop_\Omega dx'\intop_\Omega dx'' {|D_{x'}^{[\sigma]}u(x',t)-D_{x''}^{[\sigma]}u(x'',t)|^p\over |x'-x''|^{n+p(\sigma-[\sigma])}}\bigg)^{1/p}\cr
&\quad+\bigg(\intop_\Omega dx\intop_\tau^Tdt'\intop_\tau^Tdt'' {|\partial_{t'}^{[\sigma/2]}u(x,t')-\partial_{t''}^{[\sigma/2]}u(x,t'')|^p\over |t'-t''|^{1+p(\sigma/2-[\sigma/2])}}\bigg)^{1/p},\cr}
\label{2.5}
\end{equation}
where $D_x^{[\sigma]}= \partial_{x_1}^{\sigma_1}\partial_{x_2}^{\sigma_2}\partial_{x_3}^{\sigma_3}$, $\sigma_i\in\N_0$, $i=1,2,3$, $\sigma_1+\sigma_2+\sigma_3=[\sigma]$.
\end{lemma}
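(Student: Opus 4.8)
The plan is to prove the equivalence termwise, exploiting that both (\ref{2.4}) and (\ref{2.5}) split as the sum of the common $L_p(\Omega\times(\tau,T))$ term, a purely spatial seminorm and a purely temporal seminorm. Since $p=q$, in the spatial seminorm of (\ref{2.4}) the exponent $q/p=1$ disappears and Tonelli's theorem lets one interchange the $h$–integration with the integration in $(x,t)$; thus it suffices to compare, for each fixed $t\in(\tau,T)$, the one–variable quantities
\[
\sum_{i=1}^3\intop_0^{h_0}\frac{\|\Delta_i(h,\Omega)D_x^{[\sigma]}u(\cdot,t)\|_{L_p(\Omega)}^p}{h^{1+p(\sigma-[\sigma])}}\,dh
\quad\text{and}\quad
\intop_\Omega\intop_\Omega\frac{|D_{x'}^{[\sigma]}u(x',t)-D_{x''}^{[\sigma]}u(x'',t)|^p}{|x'-x''|^{n+p(\sigma-[\sigma])}}\,dx'dx'',
\]
and then integrate the resulting equivalence over $t\in(\tau,T)$ and raise to the power $1/p$.

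For the comparison of these two quantities I would invoke Lemma 7.44 of \cite{A}: with $g=D_x^{[\sigma]}u(\cdot,t)$ and $\theta=\sigma-[\sigma]\in(0,1)$, that lemma is exactly the statement that, on the admissible domains $\Omega=\R^3$ and $\Omega=\R_+^3$, the axis–parallel difference seminorm of $g$ of order $\theta$ is equivalent to the Gagliardo double integral of $g$ of the same order. The easy half is the estimate of the coordinate differences by the double integral; the reverse estimate, which reconstructs an arbitrary displacement $x'-x''$ from axis–parallel steps and controls the telescoped remainder, is the nontrivial content of Lemma 7.44 and is what I would cite rather than reprove. The temporal seminorm is treated in exactly the same way, now in one variable: for fixed $x\in\Omega$ one applies the one–dimensional case of the same equivalence to $\psi=\partial_t^{[\sigma/2]}u(x,\cdot)$ with $\theta_0=\sigma/2-[\sigma/2]\in(0,1)$, then integrates in $x$ and uses $p=q$ once more.

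One reconciliation is needed before the above applies literally. The seminorms in (\ref{2.4}) — which come from Definition \ref{d2.11} via Lemma \ref{l2.12} — carry the single–direction derivative $\partial_{x_i}^{[\sigma]}u$, whereas (\ref{2.5}) carries a mixed derivative $D_x^{[\sigma]}u$ of total order $[\sigma]$. For $[\sigma]\le1$ these coincide; in general I would first apply Lemma \ref{l2.12} once more — recall the order $m$ of the difference and the order $k$ of the derivative in Definition \ref{d2.11} are immaterial provided $m+k>\sigma>k$ — to re–express the Besov norm with differences of arbitrary mixed derivatives of order $[\sigma]$, which is the standard equivalence between the two normalizations of anisotropic Besov spaces (see \cite{BIN}). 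After this reduction the computation of the preceding paragraph goes through verbatim.

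I expect the main obstacle to be bookkeeping rather than analysis: matching the order of the differences and the mixed–derivative formulation as just described, and checking that the truncated differences $\Delta_i(h,\Omega)$ and $\Delta_t(h,(\tau,T))$ together with the finite upper limit $h_0$ do not affect the equivalence (again a consequence of Lemma \ref{l2.12}, together with the fact that contributions from $h\ge h_0$ are absorbed into $\|u\|_{L_p(\Omega\times(\tau,T))}$). No genuinely new estimate beyond Lemma 7.44 of \cite{A} and Lemma \ref{l2.12} is required.
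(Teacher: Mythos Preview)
Your proposal is correct and follows the same route as the paper: the paper does not give a proof at all, simply prefacing Lemma~\ref{l2.13} with ``From Lemma 7.44 from \cite{A} we have'', and your argument is precisely the natural unpacking of that citation, including the use of $p=q$ to apply Tonelli and the reduction via Lemma~\ref{l2.12} to handle the mixed versus directional derivatives. There is nothing to add.
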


The norm (\ref{2.5}) for $\sigma\not\in\N$ is denoted also by $\|u\|_{W_p^{\sigma,\sigma/2}(\Omega\times(\tau,T))}$, where $W_p^{\sigma,\sigma/2}(\Omega\times(\tau,T))$ is called the Sobolev-Slobodetskii space.

\begin{lemma}\label{l2.14}
Let $f\in W_p^{\sigma,\sigma/2}(\Omega^T)$, $\sigma\not\in\N$, $p\in(1,\infty)$. Let
$$
f'=\begin{cases}
f&\textrm{for $t>0$,}\cr 0&\textrm{for $t<0$.}\cr
\end{cases}
$$
Then
\begin{equation}\eqal{
&\|f'\|_{W_p^{\sigma,\sigma/2}(\Omega\times(-\infty,T))}= \|f\|_{W_p^{\sigma,\sigma/2}(\Omega\times(0,T))}\cr
&\quad+c\bigg(\intop_0^T\intop_\Omega {|\partial_t^{[\sigma/2]}f|^p\over t^{p(\sigma/2-[\sigma/2])}}dxdt\bigg)^{1/p}.\cr}
\label{2.6}
\end{equation}
If $p\big({\sigma\over2}-\big[{\sigma\over2}\big]\big)>1$ then $\partial_t^{[\sigma/2]}f|_{t=0}=0$.
\end{lemma}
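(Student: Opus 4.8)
\emph{Proof strategy.} The plan is to evaluate directly the three pieces of the $W_p^{\sigma,\sigma/2}$‑norm of $f'$, using the equivalent form \eqref{2.5} of the norm (legitimate here since $q=p$). Write $m_0=[\sigma/2]$ and $\theta=\sigma/2-m_0$; since $\sigma\notin\N$ also $\sigma/2\notin\N$, so $\theta\in(0,1)$. Cutting off in $t$ commutes with spatial differentiation and, away from $t=0$, with $\partial_t^{m_0}$, so $\partial_t^{m_0}f'$ is exactly the zero‑extension of $g:=\partial_t^{m_0}f$ and $D_x^{[\sigma]}f'$ the zero‑extension of $D_x^{[\sigma]}f$ (the distributional jump at $t=0$ is irrelevant, since the Slobodetskii seminorm sees only pointwise values). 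Hence $\|f'\|_{L_p(\Omega\times(-\infty,T))}=\|f\|_{L_p(\Omega^T)}$, and in the spatial difference term the $t$‑integral over $(-\infty,T)$ collapses to one over $(0,T)$ (the integrand vanishes for $t<0$), reproducing verbatim the spatial part of $\|f\|_{W_p^{\sigma,\sigma/2}(\Omega^T)}$.

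It remains to treat the temporal difference term
$$
J:=\intop_\Omega dx\intop_{-\infty}^T dt'\intop_{-\infty}^T dt''\,\frac{|g'(x,t')-g'(x,t'')|^p}{|t'-t''|^{1+p\theta}},
$$
where $g'$ is the zero‑extension of $g$. I would split the $(t',t'')$‑plane into the four quadrants about the origin. On $\{t',t''>0\}$ the integrand gives precisely the temporal part of $\|f\|_{W_p^{\sigma,\sigma/2}(\Omega^T)}$; on $\{t',t''<0\}$ it vanishes; on each mixed region, say $t'<0<t''$, one has $|g'(t')-g'(t'')|^p=|g(x,t'')|^p$, and
$$
\intop_\Omega dx\intop_0^T |g(x,t'')|^p\Big(\intop_{-\infty}^0\frac{dt'}{(t''-t')^{1+p\theta}}\Big)dt''=\frac1{p\theta}\intop_\Omega dx\intop_0^T\frac{|g(x,t'')|^p}{(t'')^{p\theta}}\,dt''.
$$
Adding the two symmetric mixed regions, $J$ equals the temporal part of $\|f\|_{W_p^{\sigma,\sigma/2}(\Omega^T)}$ plus $\frac{2}{p\theta}\intop_0^T\intop_\Omega|\partial_t^{m_0}f|^p\,t^{-p\theta}\,dx\,dt$. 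Together with the previous paragraph this yields \eqref{2.6} with $c=(2/p\theta)^{1/p}$, up to the usual equivalence constants (cf. Lemmas \ref{l2.12}, \ref{l2.13}); strictly speaking the ``$=$'' in \eqref{2.6} is a norm equivalence, the point being that $f'\in W_p^{\sigma,\sigma/2}(\Omega\times(-\infty,T))$ iff the weighted integral on the right is finite.

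For the last assertion, assume $p\theta>1$ and that this right‑hand side is finite. Since $f\in W_p^{\sigma,\sigma/2}(\Omega^T)$, the function $g=\partial_t^{m_0}f$ has temporal fractional smoothness $\theta>1/p$, so by the one‑dimensional embedding $W_p^\theta(0,T)\hookrightarrow C([0,T])$ applied in the $L_p(\Omega)$‑valued sense (cf. the trace statement of Lemma \ref{l2.8}) we get $g\in C([0,T];L_p(\Omega))$ and the trace $g_0:=\partial_t^{m_0}f|_{t=0}=\lim_{t\to0^+}g(\cdot,t)$ exists in $L_p(\Omega)$. If $g_0\neq0$, then $\|g(\cdot,t)\|_{L_p(\Omega)}\ge\tfrac12\|g_0\|_{L_p(\Omega)}>0$ on some interval $(0,\delta)$, whence $\intop_0^\delta\intop_\Omega|g|^p t^{-p\theta}\,dx\,dt\ge(\tfrac12\|g_0\|_{L_p(\Omega)})^p\intop_0^\delta t^{-p\theta}\,dt=\infty$ because $p\theta\ge1$ — contradicting finiteness. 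Hence $g_0=0$.

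The splitting of the integration domain and the elementary integral are routine; the delicate points are the identification $\partial_t^{m_0}f'=(\partial_t^{m_0}f)'$ (harmless, as just noted), the bookkeeping of norm conventions ($\ell^1$‑ versus $\ell^p$‑combination of the three terms, affecting only the equivalence constant), and — the genuine though still standard obstacle — the continuity statement $g\in C([0,T];L_p(\Omega))$ derived from $\theta>1/p$, which is where the trace/embedding lemmas enter.
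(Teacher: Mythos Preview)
Your proof is correct and follows essentially the same route as the paper's: direct computation of the three parts of the norm (\ref{2.5}), with the temporal Gagliardo seminorm split according to the signs of $t'$ and $t''$, and the mixed regions producing the weighted integral after integrating out the negative variable. Your four-quadrant split and your explicit constant $(2/p\theta)^{1/p}$ are a bit more careful than the paper, which writes only one mixed region $I_{32}$ and a generic $c$; likewise your continuity/contradiction argument for the trace assertion fleshes out what the paper states in one line.
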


This is a compatibility condition.

\begin{proof}
We have
$$\eqal{
&\|f'\|_{W_p^{\sigma,\sigma/2}(\Omega\times(-\infty,T))}= \|f'\|_{L_p(\Omega\times(-\infty,T))}\cr
&\quad+\bigg(\intop_{-\infty}^Tdt\intop_\Omega dx'\intop_\Omega dx'' {|D_{x'}^{[\sigma]}f(x',t)-D_{x''}^{[\sigma]}f(x'',t)|^p\over |x'-x''|^{3+p(\sigma-[\sigma])}}\bigg)^{1/p}\cr
&\quad+\bigg(\intop_\Omega dx\intop_{-\infty}^Tdt'\intop_{-\infty}^Tdt'' {|\partial_{t'}^{[\sigma/2]}f'(x,t')-\partial_{t''}^{[\sigma/2]}f'(x,t'')|^p\over |t'-t''|^{1+p(\sigma/2-[\sigma/2])}}\bigg)^{1/p}\cr
&\equiv I_1+I_2+I_3.\cr}
$$
In view of the definition of $f'$ we have
$$\eqal{
&I_1=\|f\|_{L_p(\Omega\times(0,T))},\cr
&I_2=\bigg(\intop_0^Tdt\intop_\Omega dx'\intop_\Omega dx'' {|D_{x'}^{[\sigma]}f(x',t)-D_{x''}^{[\sigma]}f(x'',t)|^p\over |x'-x''|^{3+p(\sigma-[\sigma])}}\bigg)^{1/p}\cr}
$$
and
$$\eqal{
I_3^p&=\intop_\Omega dx\intop_0^Tdt'\intop_0^Tdt'' {|\partial_{t'}^{[\sigma/2]}f(x,t')-\partial_{t''}^{[\sigma/2]}f(x,t'')|^p\over |t'-t''|^{1+p(\sigma/2-[\sigma/2])}}\cr
&\quad+\intop_\Omega dx\intop_0^Tdt'\intop_{-\infty}^0dt'' {|\partial_{t'}^{[\sigma/2]}f(x,t')|^p\over|t'-t''|^{1+p(\sigma/2-[\sigma/2])}} \equiv I_{31}^p+I_{32}^p.\cr}
$$
Integrating with respect to $t''$ in $I_{32}$ yields
$$
I_{32}^p=c\intop_\Omega dx\intop_0^Tdt'{|\partial_{t'}^{[\sigma/2]}f(x,t')|^p\over |t'|^{p(\sigma/2-[\sigma/2])}}.
$$
Hence (\ref{2.6}) holds.

If $p(\sigma/2-[\sigma/2])>1$ then the condition $I_{32}<\infty$ implies that
$$
\|\partial_t^{[\sigma/2]}f(x,t)\|_{L_p(\Omega)}|_{t=0}=0.
$$
Consequently, conditions $I_{32}<\infty$ and $p\big({\sigma\over2}-\big[{\sigma\over2}\big]\big)>1$ imply that\break $\|\partial_t^{[\sigma/2]}f(x,t)\|_{L_p(\Omega)}$ must converge to zero sufficiently fast as $t$ goes to zero. This ends the proof.
\end{proof}

\begin{lemma}\label{l2.15}
Let $f\in W_p^{\sigma,\sigma/2}(\Omega^T)$, $\sigma\not\in\N$. Then
\begin{equation}
\bigg(\intop_0^T\intop_\Omega{|\partial_t^{[\sigma/2]}f(x,t)|^p\over t^{p(\sigma/2-[\sigma/2])}}dxdt\bigg)^{1/p}\le c\|f\|_{W_p^{\sigma,\sigma/2}(\Omega^T)},
\label{2.7}
\end{equation}
where $c$ does not depend on $f$.
\end{lemma}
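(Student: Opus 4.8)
The plan is to derive the estimate \eqref{2.7} as a consequence of Lemma \ref{l2.14}, by comparing $f$ with a suitable even-type extension of $f$ past $t=0$ rather than the trivial zero extension. Write $k_0=[\sigma/2]$ and $\theta=\sigma/2-k_0\in(0,1)$, so the quantity to be bounded is $\big(\int_0^T\!\int_\Omega |\partial_t^{k_0}f|^p\,t^{-p\theta}\,dx\,dt\big)^{1/p}$. The point is that the one-sided Hardy-type weight $t^{-p\theta}$ with $p\theta$ possibly $\le 1$ cannot be controlled by the interior norm of $f$ alone in general; what saves us is that $f$ is defined on the whole interval $(0,T)$ and we only integrate $t$ over $(0,T)$, so $t^{-p\theta}$ is comparable, up to the $t\sim T$ region where everything is harmless, to $\int_{-\infty}^0 |t'-t|^{-1-p\theta}\,dt'$ applied to $\partial_t^{k_0}f(x,t)$ — which is exactly the term $I_{32}$ that appeared inside the identity \eqref{2.6}.

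Concretely, first I would fix a smooth cutoff isolating $t$ near $0$: the contribution to the left side of \eqref{2.7} from $t\in(T/2,T)$ is trivially bounded by $c(T)\|\partial_t^{k_0}f\|_{L_p(\Omega^T)}\le c\|f\|_{W_p^{\sigma,\sigma/2}(\Omega^T)}$, so it suffices to estimate the integral over $t\in(0,T/2)$. Next, reverse the computation in the proof of Lemma \ref{l2.14}: for $t\in(0,T/2)$ one has
$$
\frac{1}{t^{p\theta}}=c_\theta\intop_{-\infty}^{0}\frac{dt''}{|t-t''|^{1+p\theta}}\ge c_\theta\intop_{-T/2}^{0}\frac{dt''}{|t-t''|^{1+p\theta}},
$$
with $c_\theta=p\theta$. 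Hence, defining $\tilde f$ on $\Omega\times(-T/2,T)$ by $\tilde f=f$ for $t>0$ and $\tilde f=0$ for $t<0$, we get
$$
\intop_0^{T/2}\!\!\intop_\Omega\frac{|\partial_t^{k_0}f|^p}{t^{p\theta}}\,dx\,dt
\le c\intop_\Omega dx\intop_0^{T/2}dt'\intop_{-T/2}^0 dt''\,\frac{|\partial_{t'}^{k_0}\tilde f(x,t')-\partial_{t''}^{k_0}\tilde f(x,t'')|^p}{|t'-t''|^{1+p\theta}},
$$
since the second term vanishes identically for $t''<0$. The right-hand side is at most the full Slobodetskii time-seminorm of $\tilde f$ on $\Omega\times(-T/2,T)$, i.e. it is bounded by $c\|\tilde f\|_{W_p^{\sigma,\sigma/2}(\Omega\times(-T/2,T))}^p$.

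It then remains to bound $\|\tilde f\|_{W_p^{\sigma,\sigma/2}(\Omega\times(-T/2,T))}$ by $c\|f\|_{W_p^{\sigma,\sigma/2}(\Omega^T)}$ — but this is precisely the content of Lemma \ref{l2.14}, whose identity \eqref{2.6} expresses $\|f'\|_{W_p^{\sigma,\sigma/2}(\Omega\times(-\infty,T))}$ as $\|f\|_{W_p^{\sigma,\sigma/2}(\Omega^T)}$ plus a constant times the very quantity on the left of \eqref{2.7}; restricting that extension to the finite interval $(-T/2,T)$ only decreases the norm, and one reads off $\|\tilde f\|_{W_p^{\sigma,\sigma/2}(\Omega\times(-T/2,T))}^p\le c\|f\|_{W_p^{\sigma,\sigma/2}(\Omega^T)}^p + c\int_0^T\!\int_\Omega|\partial_t^{k_0}f|^p t^{-p\theta}\,dx\,dt$. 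To avoid circularity I would instead argue directly: the spatial-difference term of $\tilde f$ equals that of $f$ and costs nothing; the time-difference term of $\tilde f$ over $t',t''\in(-T/2,T)$ splits, as in the proof of \ref{l2.14}, into the part with both arguments positive (bounded by $\|f\|_{W_p^{\sigma,\sigma/2}(\Omega^T)}$) and the cross part with $t''<0<t'$, which integrates in $t''$ to give back a constant times $\int_0^{T/2}\!\int_\Omega|\partial_{t'}^{k_0}f|^p (t')^{-p\theta}\,dx\,dt'$ — and \emph{this} we have just shown is controlled by the same cross part. The apparent circularity is only formal: one shows the cross integral $J:=\int_0^{T/2}\!\int_\Omega|\partial_t^{k_0}f|^p t^{-p\theta}$ satisfies $J\le c\|f\|^p_{W_p^{\sigma,\sigma/2}(\Omega^T)}+$ (terms already bounded by $J$ but with a strictly smaller geometric factor if one truncates the $t''$-integration appropriately), so $J$ is finite and bounded, giving \eqref{2.7}.

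The main obstacle I anticipate is exactly this last bookkeeping point — making rigorous that the cross term and the weighted one-sided integral are comparable \emph{without} a vicious circle, and checking that the constant is uniform in $f$ (it depends only on $p,\sigma,T,\Omega$). The cleanest way is probably to prove \eqref{2.7} first under the a priori assumption that its left side is finite (using the comparison above together with Lemma \ref{l2.14}), and then remove that assumption by a density/truncation argument: replace $f$ by $f_\varepsilon(x,t)=f(x,t+\varepsilon)$ (defined on $\Omega\times(0,T-\varepsilon)$, smooth enough near $t=0$ that the weighted integral is manifestly finite), apply the estimate to $f_\varepsilon$ with a constant independent of $\varepsilon$, and pass to the limit $\varepsilon\to 0^+$ by monotone convergence on the left and continuity of the $W_p^{\sigma,\sigma/2}$ norm under translation on the right.
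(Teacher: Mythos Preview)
Your approach has a genuine gap: the circularity you identify is real and cannot be broken by the truncation you propose. Write $\theta=\sigma/2-[\sigma/2]$ and $J=\int_0^{T}\!\int_\Omega|\partial_t^{[\sigma/2]}f|^p\,t^{-p\theta}\,dx\,dt$. The computation in Lemma~\ref{l2.14} shows that the cross term $I_{32}^p$ in the norm of the zero extension $f'$ equals a constant times $J$ \emph{exactly}; hence bounding $J$ by the Slobodetskii time--seminorm of $f'$ and then invoking \eqref{2.6} simply returns $J\le c\|f\|^p+cJ$ with a constant $c$ you have no control over. Truncating the $t''$-integration to $(-\delta,0)$ replaces the weight $t^{-p\theta}$ by $t^{-p\theta}-(t+\delta)^{-p\theta}$, which for $t\ll\delta$ is $(1-o(1))\,t^{-p\theta}$; the truncated cross term is therefore not a small fraction of $J$ uniformly in $t$, and no absorption is possible. (Incidentally, your displayed inequality with the $(-T/2,0)$ truncation points the wrong way: the truncated integral is \emph{smaller} than $c_\theta t^{-p\theta}$, so you get $\ge$, not $\le$.) The density step with $f_\varepsilon(t)=f(t+\varepsilon)$ does not help either: the same non-absorbable constant appears for each $\varepsilon$.

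The paper proceeds entirely differently: it invokes a Hardy-type inequality in fractional Sobolev spaces, cited as Lemma~2 of \cite{S1}, which gives directly $\int_0^\infty |g(t)|^p\,t^{-p\theta}\,dt \le c\|g\|_{W_p^\theta(0,\infty)}^p$ when $0<\theta<1/p$, and the same under the additional hypothesis $g(0)=0$ when $1/p<\theta<1$. Applying this with $g=\partial_t^{[\sigma/2]}f(x,\cdot)$ (after an extension in time from $(0,T)$ to $(0,\infty)$) and integrating in $x$ yields \eqref{2.7}. In the second case the proof tacitly uses the compatibility condition $\partial_t^{[\sigma/2]}f|_{t=0}=0$, which is not in the lemma statement but is in force in all applications in the paper. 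The missing ingredient in your argument is precisely this external Hardy inequality; it is independent of, and not derivable from, Lemma~\ref{l2.14}.
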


\begin{proof}
Let $\sigma/2-[\sigma/2]<1/p$. Then Lemma 2 from \cite{S1} implies
\begin{equation}\eqal{
&\bigg(\intop_0^\infty\intop_\Omega{|\partial_t^{[\sigma/2]}f(x,t)|^p\over t^{p(\sigma/2-[\sigma/2])}}dxdt\bigg)^{1/p}\le c\|f\|_{L_p(\Omega;W_p^{\sigma/2-[\sigma/2]}(0,\infty))}\cr
&\le c\|f\|_{W_p^{\sigma,\sigma/2}(\Omega^T)},\cr}
\label{2.8}
\end{equation}
where an appropriate extension with respect to time from $(0,T)$ to $(0,\infty)$ was used.

Let $1/p<\sigma/2-[\sigma/2]<1+1/p$ and let $\partial_t^{[\sigma/2]}f|_{t=0}=0$. Then Lemma 2 from \cite{S1} implies (\ref{2.8}) as well. This concludes the proof.
\end{proof}

\begin{definition}(see [Tr 1, Sect. 2.3.5])\label{d2.16}
Let $L>0$ be a given natural number. By $A_{aL}(\R^4)$ we denote the collection $\varphi=\{\varphi_j(\bar x)\}_{j=0}^\infty \in S(\R^4)$ of functions with compact supports such that
$$\eqal{
C(\varphi)&=\sup_{\bar x\in\R^4}|\bar x|_a^L\sum_{|\bar\alpha|\le L}|D_{\bar x}^{\bar\alpha}\varphi_0(\bar x)|\cr
&\quad+\sup_{\bar x\in\R^4\backslash\{0\}\atop j=1,2,\dots}(|\bar x|_a^L+|\bar x|_a^{-L})\sum_{|\bar\alpha|\le L}|D_{\bar x}^{\bar\alpha}\varphi_j(2^jx,2^{2j} x_0)|<\infty,\cr}
$$
where $D_{\bar x}^{\bar\alpha}=\partial_{x_0}^{\alpha_0}\partial_{x_1}^{\alpha_1} \partial_{x_2}^{\alpha_2}\partial_{x_3}^{\alpha_3}$ and $\sum_{i=0}^4\alpha_i=|\bar\alpha|$.
\end{definition}

\begin{definition}[Partition of unity] (see Ch.4, Sect. 4)\cite{LSU}\label{d2.17}
Let $\Omega$ be the cylindrical domain with boundary $S=S_1\cup S_2$ defined previously.. We assume that $S_2$ is flat and $S_1$, $S_2$ meet along a curve $L=\bar S_1\cap\bar S_2$ under the angle $\pi/2$. The boundary $S_1$ must be sufficiently smooth so that at each point of $S_1$ there must exist a tangent plane.

Let $\bar n(\xi_k)$, $k\in{\frak N}_1$, be the unit outward vector normal to $S_1$ at the point~$\xi_k$.

A Cartesian coordinate system $y=(y_1,y_2,y_3)$ with origin at $\xi_k$, $k\in{\frak N}_1$ and the $y_3$ axis directed along $\bar n(\xi_i)$, is usually called a local coordinate system. Similarly, we can introduce a local coordinate system with origin at $\xi_k\in S_2$, $k\in{\frak N}_2$ and at $\xi_k\in L_i$, $k\in{\frak N}_3$, where $L_i=\bar S_1\cap\bar S_2(a_i)$, $i=1,2$.

Assuming that $\Omega$ is defined by a global Cartesian system $x=(x_1,x_2,x_3)$ we can transform it to the local Cartesian system with origin at $\xi_k$, $k\in{\frak N}_1\cup{\frak N}_2\cup{\frak N}_3,$ by some composition of a translation and a rotation. We denote the mapping by $Y_k$. Restrict our definition to $S_1$ part of the boundary. It is assumed that there exists a number $d>0$ such that, in a sphere of radius $d$ with center ot any point $\xi_k\in S_1$, $k\in{\frak N}_1$, the surface $S_1$ is given in a local system at the point $\xi_k$ by the equation
\begin{equation}
y_3=F_k(y'),\quad y'=(y_1,y_2),\quad k\in{\frak N}_1,
\label{2.9}
\end{equation}
where $F_k$ is a single-valued function.

We will say that $S_1\in C^m$ if $F_k(y')\in C^m(K_0)$, $k\in{\frak N}_1$, for any $\xi_k\in S_1$, where $K_0$ is the ball $|y'|\le d/2$ and if the norms $\|F_k\|_{C^m(K_0)}$ are bounded by a common constant.

We require that at least $S_1\in C^{1+\alpha}$, $\alpha\in(0,1]$. Under this assumption, in a neighborhood of $\xi_k$ we have the inequality
\begin{equation}
\bigg|{\partial F_k\over\partial y_\beta}\bigg|\le c|y'|^\alpha,\quad \beta=1,2.
\label{2.10}
\end{equation}
We assume that it is possible to construct in domain $\Omega$ for any $\lambda>0$, no matter how small, a finite number of subdomains $\omega^{(k)}$ and $\Omega^{(k)}$ possessing the following properties

\begin{itemize}
\item[1.] $\omega^{(k)}\subset\Omega^{(k)}\subset\Omega$, $\bigcup_k\omega^{(k)}=\bigcup_k\omega^{(k)}=\Omega$.
\item[2.] For any point $x\in\Omega$ there exists an $\omega^{(k)}$ such that $x\in\omega^{(k)}$ and the distance from to $\Omega\setminus\Omega^{(k)}$ is not less than $d\lambda$.
\item[3.] There exists a number $N_0,$ not depending on $\lambda,$ such that the intersection of any $N_0+1$ distinct sets $\Omega^{(k)}$ (and consequently any $N_0+1$ distinct sets $\omega^{(k)}$) is empty.
\item[4.] Sets $\omega^{(k)}$ and $\Omega^{(k)}$, $k\in{\frak M}$, are separated from the boundary $S=S_1\cup S_2$ by a positive distance. We assume that they are 3-dimensional cubes with common center $\xi_k\in\Omega$ whose linear dimensions are equal $\beta\lambda$ and $2\beta\lambda$ $(\beta>0)$, respectively.
\end{itemize}

The sets $\omega^{(k)}$ and $\Omega^{(k)}$, $k\in{\frak N}_1\cup{\frak N}_2$, are defined in local coordinates at a point $\xi_k\in S$ by the inequalities
$$\eqal{
&|y_\alpha|<\lambda/2,\quad &\alpha=1,2,\quad &0<y_3-F_k(y')<\lambda,\cr
&|y_\alpha|<\lambda,\quad &\alpha=1,2,\quad &0<y_3-F_k(y')<2\lambda.\cr}
$$
Consider $\omega^{(k)}$ and $\Omega^{(k)}$, $k\in{\frak N}_3$. Then we introduce a local system of coordinates with the origin at $\xi_k\in L_i$, $i=1,2$, such that $S_2$ is perpendicular to $y_3$ and $S_1$ is described by $y_1=F_k(y_2,y_3)$.

Therefore, $\omega^{(k)}$ and $\Omega^{(k)}$, $k\in{\frak N}_3$, are defined by
$$
|y_\beta|<\lambda/2,\quad \beta=2,3,\quad 0<y_1-F_k(y_2,y_3)<\lambda,
$$
$$
|y_\beta|<\lambda,\quad \beta=2,3,\quad 0<y_1-F_k(y_2,y_3)<2\lambda.
$$
The change of variables
\begin{equation}
z_\alpha=y_\alpha,\quad z_3=y_3-F_k(y'),\quad k\in{\frak N}_1
\label{2.11}
\end{equation}
transforms domains $\omega^{(k)}$, $\Omega^{(k)}$, $k\in{\frak N}_1$, into the cubes
$$\eqal{
&|z_\alpha|<\lambda/2,\quad &\alpha=1,2,\quad &0<z_3<\lambda,\cr
&|z_\alpha|<\lambda,\quad &\alpha=1,2,\quad &0<z_3<2\lambda.\cr}
$$
Since $S_2$ is flat, coordinates $x$, $y$ and $z$ can be taken as the same.

For $k\in{\frak N}_3$, the change of variables has the form
\begin{equation}
z_\alpha=y_\alpha,\quad \alpha=2,3,\quad z_1=y_1-F_k(y_2,y_3).
\label{2.12}
\end{equation}
We introduce functions $\zeta^{(k)}(x)$ having the properties
$$
0\le\zeta^{(k)}(x)\le 1,\quad |D_x^s\zeta^{(k)}|\le c_s/\lambda^s,\quad \zeta^{(k)}(x)=\begin{cases}1\ &{\rm for\ }x\in\omega^{(k)}\cr
0\ &{\rm for\ }x\in\Omega\setminus\Omega^{(k)}\cr\end{cases}
$$
By virtue of property 3 of the domains $\Omega^{(k)},$
$$
1\le\sum_k(\zeta^{(k)})^2\le N_0
$$
and hence the functions
$$
\eta^{(k)}(x)={\zeta^{(k)}(x)\over\sum_j\zeta^{(j)^2}(x)}
$$
possess the following properties:
$$
\eta^{(k)}=0\quad {\rm in}\ \ \Omega\setminus\Omega^{(k)},\quad |D^s\eta^{(k)}(x)|\le c_s\lambda^s
$$
and moreover,
$$
\sum_k\eta^{(k)}(x)\zeta^{(k)}(x)=1.
$$
\end{definition}
\begin{remark}\label{r2.18}
The norms of Besov spaces described by  Fourier transforms and the norms defined by differences are equivalent (see \cite{Am}).
\end{remark}

\begin{lemma}\label{l2.19}
Let $e_0(x_3)=e^{-\tau x_3}$, $e_1(x_3)={e^{-\tau x_3}-e^{-|\xi|x_3}\over\tau-|\xi|}$, $e_2(x_3)=e^{-|\xi|x_3}$, $\tau^2=s+\xi'^2$, $\xi'=(\xi_1,\xi_2)$, $s=\gamma+i\xi_0$, $\xi'\in\R^2$, $\xi_0\in\R$, ${\rm Re} s=\gamma>0$. Let $j\in\N\cup\{0\}\equiv\N_0$, $\partial_{\xi'}^{k'}=\partial_{\xi_1}^{k_1}\partial_{\xi_2}^{k_2}$, $k'=k_1+k_2$, $\varkappa\in(0,1)$.\\
Then for $e_0(x_3)$ we have
\begin{equation}\eqal{
&\sum_{2k+k'\le2}\intop_0^\infty|\partial_{x_3}^j\partial_{\xi_0}^k \partial_{\xi'}^{k'}e_0|^pdx_3\le c\sum_{2k+k'\le 2}|\tau|^{pj-p(2k+k')-1},\cr
&\sum_{j+2k+k'\le 2}\intop_0^\infty\intop_0^\infty {|\partial_{x_3}^j\partial_{\xi_0}^k\partial_{\xi'}^{k'}e_0(x_3+z)- \partial_{x_3}^j\partial_{\xi_0}^k\partial_{\xi'}^{k'}e_0(x_3)|^p\over z^{1+p\varkappa}}\cr
&\le c\sum_{j+2k+k'\le 2}c|\tau|^{p(j+\varkappa)-p(2k+k')-1}.\cr}
\label{2.13}
\end{equation}
\begin{equation}\eqal{
&\sum_{j+2k+k'\le 2}\intop_0^\infty\intop_0^\infty {|\partial_{x_3}^j\partial_{\xi_0}^k\partial_{\xi'}^{k'}e_0(x_3+z)- \partial_{x_3}^j\partial_{\xi_0}^k\partial_{\xi'}^{k'}e_0(x_3)|^p\over z^{1+p\varkappa}}dx_3dz\cr
&\le\sum_{j+2k+k'\le 2}c|\tau|^{p(j+\varkappa)-p(2k+k')-1}.\cr}
\label{2.14}
\end{equation}
Next $e_1(x_3)$ satisfies the estimates
\begin{equation}\eqal{
&\sum_{j+2k+k'\le 2}\intop_0^\infty|\partial_{x_3}^j\partial_{\xi_0}^k\partial_{\xi'}^{k'} e_1(x_3)|^pdx_3\cr
&\le c\sum_{j+2k+k'\le 2} {|\tau|^{pj-p(2k+k')-1}+|\xi|^{pj-p(2k+k')-1}\over |\tau|^p}\cr}
\label{2.15}
\end{equation}
\begin{equation}\eqal{
&\sum_{j+2k+k'\le 2}\intop_0^\infty\intop_0^\infty {|\partial_{x_3}^j\partial_{\xi_0}^k\partial_{x'}^{k'}e_1(x_3+z)- \partial_{x_3}^j\partial_{\xi_0}^k\partial_{\xi'}^{k'}e_1(x_3)|^p\over z^{1+2\varkappa}}dx_3dz\cr
&\le c\sum_{2k+k'\le 2} {|\tau|^{p(j+\varkappa)-p(2k+k')-1}+ |\xi|^{p(j+\varkappa)-p(2k+k')-1}\over|\tau|^p}\cr}
\label{2.16}
\end{equation}
Finally, $e_2=e_0-(\tau-|\xi|)e_1$.
\end{lemma}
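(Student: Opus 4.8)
The plan is to reduce every estimate to two elementary one-variable facts about exponentials, and then track the powers of the parameters carefully. For the first fact: for $\mathrm{Re}\,\mu>0$ and $m\in\N_0$ one has $\intop_0^\infty|x_3^m e^{-\mu x_3}|^p\,dx_3 = c_{m,p}\,|\mu|^{-(pm+1)}\cdot(|\mu|/\mathrm{Re}\,\mu)^{\text{(bounded)}}$; since in our situation $\mathrm{Re}\,\tau$ and $\mathrm{Re}\,|\xi|$ are comparable to $|\tau|$ and $|\xi|$ respectively (because $\mathrm{Re}\,s=\gamma>0$ and $\tau^2=s+|\xi'|^2$ gives $\mathrm{Re}\,\tau\ge c|\tau|$ in the relevant sector), this reads simply $\intop_0^\infty|x_3^m e^{-\tau x_3}|^p\,dx_3\le c|\tau|^{-(pm+1)}$. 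For the second fact, the difference quotient in $z$: writing $g(x_3+z)-g(x_3)=\intop_0^z g'(x_3+\theta)\,d\theta$ for $z$ small and using the trivial bound $|g(x_3+z)-g(x_3)|\le |g(x_3+z)|+|g(x_3)|$ for $z$ large, one gets $\intop_0^\infty\intop_0^\infty |g(x_3+z)-g(x_3)|^p z^{-1-p\varkappa}\,dx_3\,dz\le c\,\|g'\|_{L_p}^{p}\,(\text{from }z<1)+c\,\|g\|_{L_p}^p\,(\text{from }z>1)$ — but to get a single clean power one interpolates: the standard computation yields $\le c\,|\mu|^{p\varkappa}\intop_0^\infty|g|^p\,dx_3$ when $g=x_3^m e^{-\mu x_3}$, i.e. $\le c|\mu|^{p(m+\varkappa)-1}$, valid precisely because $\varkappa\in(0,1)$ so the integral in $z$ converges at both ends.

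First I would dispatch $e_0=e^{-\tau x_3}$. Each derivative $\partial_{x_3}^j\partial_{\xi_0}^k\partial_{\xi'}^{k'}e_0$ is, by the chain rule and $\partial_{\xi_0}\tau = \tfrac{i}{2\tau}$, $\partial_{\xi_\ell}\tau=\tfrac{\xi_\ell}{\tau}$, a finite sum of terms of the form $(\text{bounded coefficient})\cdot\tau^{a}\cdot x_3^{b}e^{-\tau x_3}$ with $b\le j$ and $a = j - (2k+k') + (\text{correction} \le 0)$ — more precisely the worst power is $a+b$ combining to give, after the $L_p$ integration via the first fact, $|\tau|^{pj-p(2k+k')-1}$. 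Summing over $2k+k'\le 2$ (resp. $j+2k+k'\le 2$) gives \eqref{2.13}. Estimate \eqref{2.14} is literally the second fact applied termwise with $\mu=\tau$, $m\le j$. Note \eqref{2.13} and \eqref{2.14} overlap; I'd just prove the pointwise-in-$(j,k,k')$ bound once and sum.

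For $e_1 = \dfrac{e^{-\tau x_3}-e^{-|\xi|x_3}}{\tau-|\xi|}$ the only subtlety is the apparent singularity at $\tau=|\xi|$. I would handle it by the integral representation $e_1(x_3) = \intop_0^1 x_3\, e^{-(\theta\tau+(1-\theta)|\xi|)x_3}\,d\theta$ (so $e_1$ is manifestly smooth across $\tau=|\xi|$), differentiate under the integral sign, and on the integrand use the first/second facts with $\mu_\theta=\theta\tau+(1-\theta)|\xi|$, noting $|\mu_\theta|\ge c(|\tau|\wedge|\xi|)$ and $|\mu_\theta|\le |\tau|+|\xi|$; each $\xi$- or $\xi_0$-derivative falling on $\mu_\theta$ costs a factor $|\tau|^{-1}$ or $|\mu_\theta|^{-1}$ from the Faà di Bruno expansion, which after integrating in $x_3$ (or in $z$ for the difference) and bounding $\mu_\theta$ from above by $|\tau|$ and $|\xi|$ separately, produces the two summands $\bigl(|\tau|^{pj-p(2k+k')-1}+|\xi|^{pj-p(2k+k')-1}\bigr)/|\tau|^p$ of \eqref{2.15} and the analogue \eqref{2.16}. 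The identity $e_2 = e_0 - (\tau-|\xi|)e_1$ is immediate from the definition and needs no proof; it is recorded only so that the estimates for $e_2$ follow by combining those for $e_0$ and $e_1$.

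The main obstacle is purely bookkeeping: keeping the exponent arithmetic straight through the chain rule when both $\partial_{\xi_0}$ (worth two units, via $\tau^2=s$) and $\partial_{\xi'}$ (worth one unit each) act, and making sure the "correction" terms from differentiating $\tau$ never improve by less than claimed. I expect the cleanest route is to prove a single lemma-internal pointwise estimate of the shape $|\partial_{x_3}^j\partial_{\xi_0}^k\partial_{\xi'}^{k'}e_0|\le c\sum_{b\le j}|\tau|^{j-b-(2k+k')}x_3^{b}e^{-c|\tau|x_3}$ by induction on $k+k'$, and likewise for $e_1$ with $|\tau|$ replaced by $|\mu_\theta|$ inside the $\theta$-integral, and then feed these into the two elementary integral facts. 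Everything else is routine.
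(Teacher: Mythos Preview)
Your proposal is correct, and for $e_0$ it is essentially what the paper does: explicit chain-rule expansion into terms of the form $\tau^a x_3^b e^{-\tau x_3}$, then the one-variable exponential integral, with the difference quotient handled by factoring $e^{-\tau(x_3+z)}-e^{-\tau x_3}=e^{-\tau x_3}(e^{-\tau z}-1)$ and rescaling $y=\tau z$ (which is your ``standard computation'').

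For $e_1$ your route genuinely diverges from the paper's. The paper represents $e_1$ as the convolution $e_1(x_3)=-\intop_0^{x_3}e^{-|\xi|(x_3-y)}e^{-\tau y}\,dy$ and applies Young's inequality for the basic $L_p$ bound, then invokes Solonnikov's Lemma~3.1 in \cite{S2} for the $\partial_{x_3}^j$ estimates; for the difference quotient it uses the algebraic identity $e_1(x_3+z)-e_1(x_3)=e^{-\tau x_3}e_1(z)+e_1(x_3)(e^{-|\xi|z}-1)$ and breaks the resulting expression (after a further $\partial_{\xi_0}$) into a cascade of sub-terms $I_1,I_2^1,I_2^2,I_3^1,I_3^2$, several of which carry intermediate factors $|\tau-|\xi||^{-p}$ that must cancel in the end. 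Your parametric representation $e_1(x_3)=-x_3\intop_0^1 e^{-\mu_\theta x_3}\,d\theta$ with $\mu_\theta=\theta\tau+(1-\theta)|\xi|$ (note the sign) sidesteps both the external citation and the apparent $\tau=|\xi|$ singularity entirely, and lets you recycle the $e_0$-type computation uniformly in $\theta$; this is cleaner and fully self-contained. The paper's approach, on the other hand, makes the individual contributions of the $\tau$- and $|\xi|$-exponentials more visible at each step, which matches how the bounds are later used in Section~\ref{se4}. One small caution: when you differentiate $\mu_\theta$ in $\xi'$ you get $\theta\xi_\ell/\tau+(1-\theta)\xi_\ell/|\xi|$, which is $O(1)$ rather than $O(|\tau|^{-1})$, so the bookkeeping for $\partial_{\xi'}$ versus $\partial_{\xi_0}$ needs to be kept separate---but your anisotropic counting ($\partial_{\xi_0}$ worth two, $\partial_{\xi'}$ worth one) already reflects this.
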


\begin{proof}
First we prove (\ref{2.13}). Let $k=k'=j=0$. Then
\begin{equation}
\intop_0^\infty|e_0(x_3)|^pdx_3=\intop_0^\infty e^{-p{\rm Re}\,\tau x_3}dx_3\le {1\over p{\rm Re}\tau}\le{\sqrt{2}\over p}|\tau|^{-1},
\label{2.17}
\end{equation}
where we have used the fact that ${\rm Arg}\tau\in(-\pi/4,\pi/4)$. Thus $|\tau|\le\sqrt{2}{\rm Re}\tau$. Let $k=k'=0$ and $j\in\N$. Then
\begin{equation}
\partial_{x_3}^je_0=(-1)^j\tau^je^{-\tau x_3}
\label{2.18}
\end{equation}
and
\begin{equation}
\intop_0^\infty|\partial_{x_3}^je_0|^pdx_3\le|\tau|^{pj}\intop_0^\infty |e_0(x_3)|^pdx\le c|\tau|^{pj-1}
\label{2.19}
\end{equation}
Let $k=1$. Then
\begin{equation}
\partial_{xi_0}\partial_{x_3}^je_0=(c_1\tau^{j-1}+c_2\tau^jx_3)\tau_{,\xi_0}e^{-\tau x_3}.
\label{2.20}
\end{equation}
Since $|\tau_{,\xi_0}|\le c/|\tau|$ and
\begin{equation}
\sup_{x_3}x_3^pe^{-(p/2){\rm Re}\tau x_3}\le c/|\tau|^p
\label{2.21}
\end{equation}
we obtain
\begin{equation}
\intop_0^\infty|\partial_{\xi_0}\partial_{x_3}^je_0|^pdx_3\le c|\tau|^{pj-2p}\intop_0^\infty e^{-(p/2){\rm Re}\tau x_3}dx_3\le cc|\tau|^{pj-2p-1}.
\label{2.22}
\end{equation}
Let $k'=2$. Since $|\partial_{\xi'}\tau|\le c$, $|\partial_{\xi'}^2\tau|\le c/|\tau|$ we obtain
$$
|\partial_{\xi'}^2\partial_{x_3}^je_0|\le(c_1|\tau|^{j-2}+c_2|\tau|^{j-1}x_3+ c_3|\tau|^jx_3^2)e^{-{\rm Re}\tau x_3}.
$$
Using (\ref{2.21}) and
\begin{equation}
\sup_{x_3}x_3^{2p}e^{-(p/2){\rm Re}\tau x_3}\le c/|\tau|^2
\label{2.23}
\end{equation}
we obtain
\begin{equation}
\intop_0^\infty|\partial_\xi^2\partial_{x_3}^je_0|^pdx_3\le c|\tau|^{pj-2p-1}.
\label{2.24}
\end{equation}
Estimated (\ref{2.19}), (\ref{2.22}) adn (\ref{2.24}) imply (\ref{2.13}).

Next we prove (\ref{2.14}). For $k=k'=0$ and $j\in\N_0$ we have
$$\eqal{
&\intop_0^\infty\intop_0^\infty {|\partial_{x_3}^je_0(x_3+z)- \partial_{x_3}^je_0(x_3)|^p\over z^{1+p\varkappa}}dx_3dz\cr
&\le c\intop_0^\infty\intop_0^\infty|\tau|^{pj} {|e^{-\tau(x_3+z)}-e^{-\tau x_3}|^p\over z^{1+p\varkappa}}dx_3dz\cr
&\le c\intop_0^\infty|\tau|^{pj}e^{-{\rm Re}\tau px_3}dx_3\intop_0^\infty {|e^{-\tau z}-1|^p\over z^{1+\varkappa p}}dz\cr
&\le c|\tau|^{p(j+\varkappa)-1}.\cr}
$$
Finally, we prove (\ref{2.14}) for $k=1$, $k'=0$, $j\in\N_0$. Then we get
$$
\intop_0^\infty\intop_0^\infty {|\partial_{\xi_0}\partial_{x_3}^je_0(x_3+z)- \partial_{\xi_0}\partial_{x_3}^je(x_3)|^p\over z^{1+p\varkappa}}dx_3dz\equiv I_1.
$$
Using that
$$\eqal{
&\partial_{\xi_0}(\tau^je^{-\tau(x_3+z)})=(j\tau^{j-1}-\tau^j(x_3+z)) \partial_{\xi_0}\tau e^{-\tau(x_3+z)}\cr
&\partial_{\xi_0}(\tau^je^{-\tau x_3})=(j\tau^{j-1}-\tau^jx_3)\partial_{\xi_0} \tau e^{-\tau x_3}\cr}
$$
we obtain
$$\eqal{
&\partial_{\xi_0}(\tau^je^{-\tau(x_3+z)})-\partial_{\xi_0}(\tau^je^{-\tau x_3})= -\tau^jze^{-\tau(x_3+z)}\partial_{\xi_0}\tau\cr
&\quad+(j\tau^{j-1}-\tau^jx_3)(e^{-\tau(x_3+z)}-e^{-\tau x_3})\partial_{\xi_0}\tau.\cr}
$$
Using that $|\tau_{,\xi_0}|\le c/|\tau|$ we obtain
$$\eqal{
I_1&\le c|\tau|^{p(j-1)}\intop_0^\infty\intop_0^\infty {|z|^pe^{-p{\rm Re}\tau\,(x_3+z)}\over|z|^{1+\varkappa p}}dx_3dz\cr
&\quad+c|\tau|^{p(j-2)}\intop_0^\infty\intop_0^\infty {|e^{-\tau(x_3+z)}-e^{-\tau x_3}|^p\over|z|^{1+\varkappa p}}dx_3dz\cr
&\quad+c|\tau|^{p(j-1)}\intop_0^\infty\intop_0^\infty x_3^p {|e^{-\tau(x_3+z)}-e^{-\tau x_3}|^p\over|z|^{1+\varkappa p}}dx_3dz\cr
&\equiv J_1+J_2+J_3.\cr}
$$
To estimate $J_1$ we use
$$\eqal{
&\intop_0^\infty\intop_0^\infty{|z^pe^{-p{\rm Re}\tau(x_3+z)}\over z^{1+\varkappa p}}dzdx_3=\intop_0^\infty e^{-p{\rm Re}\tau x_3}dx_3 \intop_0^\infty{z^pe^{-p{\rm Re}\tau z}\over z^{1+\varkappa p}}dz\cr
&={1\over p{\rm Re}\tau}\intop_0^\infty{y^pe^{-py}\over y^{1+\varkappa p}}dy ({\rm Re}\tau)^{\varkappa p-p}\le c|\tau|^{\varkappa p-p-1}.\cr}
$$
To estimate $J_2$ we need
$$\eqal{
&\intop_0^\infty\intop_0^\infty {|e^{-\tau(x_3+z)}-e^{-\tau x_3}|^p\over |z|^{1+\varkappa p}}dxdx_3=\intop_0^\infty e^{-p{\rm Re}\tau x_3}dx_3\cdot\cr
&\quad\cdot\intop_0^\infty{|e^{-{\tau\over|\tau|}y}-1|^p\over|y|^{1+\varkappa p}}dy|\tau|^{\varkappa p}\le c|\tau|^{\varkappa p-1}.\cr}
$$
Finally, we examine $J_3$. Then we have
$$\eqal{
&\intop_0^\infty\intop_0^\infty{x_3^p|e^{-\tau(x_3+z)}-e^{-\tau x_3}|^p\over z^{1+\varkappa p}}dx_3dz\cr
&=\intop_0^\infty x_3^pe^{-p{\rm Re}\tau x_3}dx_3\intop_0^\infty{|e^{-\tau z}-1|^p\over z^{1+\varkappa p}}dz\equiv J_3^1,\cr}
$$
where the first integral is bounded by
$$
\sup_{x_3}x_3^pe^{-(p/2){\rm Re}\tau x_3}\intop_0^\infty e^{-(p/2){\rm Re}\tau x_3}dx_3\le c|{\rm Re}\tau|^{-p-1}.
$$
Hence
$$
J_3^1\le c|\tau|^{\varkappa p-p-1}.
$$
Summarizing, we proved (\ref{2.14}) in the case $k=1$, $j\in\N_0$. A corresponding estimate holds for $k'=2$, $j\in\N_0$.

Next, we show (\ref{2.15}). We consider $e_1(x_3)$ as the convolution
\begin{equation}
e_1(x_3)=-\intop_0^{x_3}e^{-|\xi|(x_3-y)}e^{-\tau y}dy.
\label{2.25}
\end{equation}
Hence, by the Young inequality, we obtain
$$
\intop_0^\infty|e_1(x_3)|^pdx_3\le\bigg(\intop_0^\infty e^{-{\rm Re}\tau\, y}dy\bigg)^p\intop_0^\infty e^{-p|\xi|x_3}dx_3\le{1\over|\tau|^p|\xi|}.
$$
We calculate
$$
\partial_{\xi_0}e_1(x_3)=\intop_0^{x_3}e^{-|\xi|(x_3-y)}ye^{-\tau y}\partial_{\xi_0}\tau dy.
$$
Using $|\partial_{\xi_0}\tau|\le c/|\tau|$ and the Young inequality yield
$$
\intop_0^\infty|\partial_{\xi_0}e_1(x_3)|^pdx_3\le{c\over|\tau|^{3p}|\xi|}.
$$
In view of the relation
$$
{d^je_1(x_3)\over dx_3^j}=(-|\xi|)^je_1(x_3)+(-1)^j(\tau^{j-1}+\tau^{j-2}|\xi|+ \cdots+|\xi|^{j-1})e_0(x_3)
$$
and the estimate
$$
\intop_0^\infty\bigg|{d^j\over dx_3^j}e_0(x_3)\bigg|^pdx_3\le c|\tau|^{2j-1}
$$
we obtain
$$
\intop_0^\infty\bigg|{d^j\over dx_3^j}e_1(x_3)\bigg|^pdx_3\le c{|\tau|^{pj-1}+|\xi|^{pj-1}\over|\tau|^p}.
$$
From the proof of Lemma \ref{l3.1} from \cite{S2} we have
$$
\intop_0^\infty\bigg|{d^je_1(x_3)\over dx_3^j}\bigg|^pdx_3\le c{|\tau|^{pj-1}+|\xi|^{pj-1}\over|\tau|^p}.
$$
Consider
$$\eqal{
&\intop_0^\infty|\partial_{\xi_0}e_1(x_3)|^pdx_3\le\intop_0^\infty {1\over|\tau-|\xi||^p}|\tau_{,\xi_0}|^p|e_1|^pdx_3\cr
&\quad+\intop_0^\infty\bigg|{x_3e^{-\tau x_3}\tau_{,\xi_0}\over\tau-|\xi|} \bigg|^pdx_3\equiv I_1+I_2.\cr}
$$
First we estimate
$$
I_1\le{c\over|\tau|^p}{1\over|\tau-|\xi||^p}\intop_0^\infty|e_1|^pdx_3\le {c\over|\tau-|\xi||^p|\tau|^{2p}|\xi|},
$$
where we used estimate of $\intop_0^\infty|e_1|^pdx_3$ from the proof of Lemma l3.1 in \cite{S2}.

Next
$$
I_2\le{c\over|\tau|^p}{1\over|\tau-|\xi||^p}\intop_0^\infty x_3^pe^{-p{\rm Re}\tau x_3}dx_3\le{c\over|\tau-|\xi||^p|\tau|^{2p}|\tau|}
$$
Summarizing, we obtain
$$
\intop_0^\infty|\partial_{\xi_0}e_1(x_3)|^pdx_3\le{c\over|\tau-|\xi||^p|\tau|^{2p}} \bigg({1\over|\tau|}+{1\over|\xi|}\bigg).
$$
Continuing the considerations we derive (\ref{2.15}).

Consider the function
$$
e_1(x_3+z)-e_1(x_3)=e^{-\tau x_3}e_1(z)+e_1(x_3)(e^{-|\xi|z}-1).
$$
We examine
$$\eqal{
I&\equiv\intop_0^\infty|(e_1(x_3+z)-e_1(x_3))_{,\xi_0}|^pdx_3\cr
&=\intop_0^\infty\intop_0^\infty|(e^{-\tau x_3}e_1(z))_{,\xi_0}+e_{1,\xi_0}(x_3) (e^{-|\xi|z}-1)|^p{dx_3dz\over z^{1+p\varkappa}}\cr
&\le\intop_0^\infty\intop_0^\infty|x_3e^{-\tau x_3}\tau_{,\xi_0}e_1(z)|^p {dx_3dz\over z^{1+p\varkappa}}\cr
&\quad+\intop_0^\infty\intop_0^\infty|e^{-\tau x_3}e_{1,\xi_0}(z)|^p {dx_3dz\over z^{1+p\varkappa}}\cr
&\quad+\intop_0^\infty\intop_0^\infty|e_{1,\xi_0}(x_3)(e^{-|\xi|z}-1)|^p {dx_3dz\over z^{1+p\varkappa}}\equiv I_1+I_2+I_3.\cr}
$$
First we estimate $I_1$,
$$\eqal{
I_1&\le{c\over|\tau|^p}\intop_0^\infty|x_3e^{-\tau x_3}|^pdx_3\intop_0^\infty |e_1(z)|^p{dz\over z^{1+p\varkappa}}\cr
&\le{c\over|\tau|^{2p+1}}\intop_0^\infty|e_1(z)|^p{dz\over z^{1+p\varkappa}}\equiv I_1^1.\cr}
$$
From the proof of Lemma 3.1 in \cite{S2} we have
\begin{equation}
\intop_0^\infty{|e_1(z)|^p\over z^{1+p\varkappa}}dz\le c|\xi|^{p\varkappa-2}.
\label{2.26}
\end{equation}
Using (\ref{2.26}) yields
$$
I_1^1\le{c\over|\tau|^{2p+1}}|\xi|^{p\varkappa-2}.
$$
To estimate $I_2$ we use the formula
\begin{equation}
e_{1,\xi_0}(z)={-ze^{-\tau z}\over\tau-|\xi|}\tau_{,\xi_0}-{e^{-\tau z}-e^{-|\xi|z}\over(\tau-|\xi|)^2}\tau_{,\xi_0}
\label{2.27}
\end{equation}
Then
$$\eqal{
I_2&\le\intop_0^\infty\intop_0^\infty\bigg|{e^{-\tau x_3}ze^{-\tau z}\over \tau-|\xi|}\tau_{,\xi_0}\bigg|^p{dx_3dz\over z^{1+p\varkappa}}\cr
&\quad+\intop_0^\infty\intop_0^\infty\bigg|{e^{-\tau x_3}(e^{-\tau z}-e^{-|\xi|z})\over(\tau-|\xi|)^2}\tau_{,\xi_0}\bigg|^p{dx_3dz\over z^{1+p\varkappa}}\cr
&\equiv I_2^1+I_2^2.\cr}
$$
Consider $I_2^1$. It is bounded by
$$\eqal{
I_2^1&\le{1\over|\tau-|\xi||^p}{c\over|\tau|^p}\intop_0^\infty e^{-p{\rm Re}\tau x_3}dx_3\intop_0^\infty{|z|^p\over z^{1+p\varkappa}}e^{-p{\rm Re}\tau z}dz\cr
&\le{c\over|\tau-|\xi||^p}{1\over|\tau|^{p+1}}\intop_0^\infty{|z|^p\over z^{1+p\varkappa}}e^{-p{\rm Re}\tau z}dz\equiv I_2^{11}.\cr}
$$
Changing variables $y={\rm Re}\tau z$ in the integral yields
$$\eqal{
I_2^{11}&={c\over|\tau-|\xi||^p}{1\over|\tau|^{p+1}}|\tau|^{p(\varkappa-1)} \intop_0^\infty{|y|^p\over y^{1+p\varkappa}}e^{-py}dy\cr
&={c\over|\tau-|\xi||^p}{1\over|\tau|^p}|\tau|^{p(\varkappa-1)-1}.\cr}
$$
Next we examine $I_2^2$. We estimate it in the form
$$
I_2^2\le{1\over|\tau-|\xi||^p}{c\over|\tau|^p}\intop_0^\infty e^{-p{\rm Re}\tau x_3}dx_3\intop_0^\infty{|e_1(z)|^p\over z^{1+p\varkappa}}dz\equiv I_2^{21}.
$$
Performing integration in the first integral and using (\ref{2.26}) yield
$$
I_2^{21}\le{1\over|\tau-|\xi||^p}{c\over|\tau|^{p+1}}|\xi|^{p\varkappa-2}.
$$
To estimate $I_3$ we use the formula
\begin{equation}
e_{1,\xi_0}(x_3)={-x_3e^{-\tau x_3}\over\tau-|\xi|}\tau_{,\xi_0}-{e^{-\tau x_3}-e^{-|\xi|x_3}\over(\tau-|\xi|)^2}\tau_{,\xi_0}
\label{2.28}
\end{equation}
Then
$$\eqal{
I_3&\le{c\over|\tau|^p}\intop_0^\infty\bigg|{x_3e^{-\tau x_3}\over\tau-|\xi|}\bigg|^pdx_3\intop_0^\infty{|e^{-|\xi|z}-1|^p\over z^{1+\varkappa p}}dz\cr
&\quad+{c\over|\tau|^p}\intop_0^\infty\bigg|{e^{-\tau x_3}-e^{-|\xi|x_3}\over(\tau-|\xi|)^2}\bigg|^pdx_3\intop_0^\infty{|e^{-|\xi|z}-1|^p\over z^{1+\varkappa p}}dz\cr
&\equiv I_3^1+I_3^2.\cr}
$$
Consider $I_3^1$,
$$
I_3^1\le{c\over|\tau|^p}{1\over|\tau-|\xi||^p}{1\over|\tau|^{p+1}}\intop_0^\infty {|e^{-|\xi|z}-1|^p\over z^{1+\varkappa p}}dz\equiv I_3^{11}.
$$
Changing variables in the integral and using that
\begin{equation}
\intop_0^\infty{|e^{-y}-1|^p\over y^{1+\varkappa p}}dy<\infty
\label{2.29}
\end{equation}
we obtain
$$
I_3^{11}\le{c\over|\tau-|\xi||^p}{1\over|\tau|^{2p+1}}|\xi|^{\varkappa p}.
$$
Finally, we estimate
$$
I_3^2\le{c\over|\tau|^p}{1\over|\tau-|\xi||^p}\intop_0^\infty|e_1(x_3)|^pdx_3 \intop_0^\infty{|e^{-|\xi|z}-1\over z^{1+\varkappa p}}dz\equiv I_3^{21}.
$$
Using (see the proof of Lemma 3.1 in \cite{S2})
$$
\intop_0^\infty|e_1(x_3)|^pdx_3\le c{1\over|\tau|^p|\xi|}
$$
and (\ref{2.29}) we obtain
$$
I_3^{21}\le{c\over|\tau-|\xi||^p}{1\over|\tau|^{2p}|\xi|}|\xi|^{p\varkappa}= {c\over|\tau-|\xi||^p}{1\over|\tau|^{2p}}|\xi|^{p\varkappa-1}.
$$
Summarizing,
$$\eqal{
I&\le{c\over|\tau|^{2p+1}}|\xi|^{p\varkappa-2}+{c\over|\tau-|\xi||^p} |\tau|^{p(\varkappa-2)-1}\cr
&\quad+{c\over|\tau-|\xi||^p}{1\over|\tau|^{p+1}}|\xi|^{p\varkappa-2}\cr
&\quad+{c\over|\tau-|\xi||^p}{|\xi|^{\varkappa p}\over|\tau|^{2p}}\bigg({1\over|\tau|}+{1\over|\xi|}\bigg).\cr}
$$
Estimates for $\partial_{x_3}^j$ are derived in Lemma \ref{l3.1} in \cite{S2}. Hence (\ref{2.16}) holds. This ends the proof.
\end{proof}

\section{The Stokes system in the whole space}\label{se3}

We consider the following Stokes system
\begin{equation}\eqal{
&v_t-\nu\Delta v+\nabla p=f\quad &{\rm in}\ \ \R^3\times\R_+,\cr
&\divv v=g\quad &{\rm in}\ \ \R^3\times\R_+,\cr
&v|_{t=0}=v_0\quad &{\rm in}\ \ \R^3.\cr}
\label{3.1}
\end{equation}
We are looking for solutions to (\ref{3.1}) under the following assumptions
\begin{equation}\eqal{
&f\in W_p^{\sigma,\sigma/2}(\R^3\times\R_+),\cr
&g\in W_p^{\sigma+1,\sigma/2+1/2}(\R^3\times\R_+),\cr
&v_0\in W_p^{\sigma+2-2/p}(\R^3),\cr}
\label{3.2}
\end{equation}
where $p\in(1,\infty)$, $\sigma\not\in\N$.

Since $v_0\in W_p^{\sigma+2-2/p}(\R^3)$ there exists the time extension\\ $\tilde v_0\in W_p^{\sigma+2,\sigma/2+1}(\R^3\times\R_+)$ such that
\begin{equation}
\tilde v_0|_{t=0}=v_0
\label{3.3}
\end{equation}
and
\begin{equation}
\|\tilde v_0\|_{W_p^{\sigma+2,\sigma/2+1}(\R^3\times\R_+)}\le c\|v_0\|_{W_p^{\sigma+2-2/p}(\R^3)},
\label{3.4}
\end{equation}
where $c$ does not depend on $v_0$.

Having the extension $\tilde v_0$ we introduce the new function
\begin{equation}
\tilde v=v-\tilde v_0
\label{3.5}
\end{equation}
such that $(\tilde v,p)$ is a solution to the Stokes system with vanishing initial data
\begin{equation}\eqal{
&\tilde v_t-\nu\Delta\tilde v+\nabla p=f-\tilde v_{0,t}+\nu\Delta\tilde v_0 \equiv\tilde f,\cr
&\divv\tilde v=g-\divv\tilde v_0\equiv\tilde g\cr
&\tilde v|_{t=0}=0.\cr}
\label{3.6}
\end{equation}
Using Lemmas \ref{l2.14} and \ref{l2.15} we can extend functions $\tilde f$ and $\tilde g$ by zero for $t<0$. We denote the extended functions by
$$\eqal{
&f'=\begin{cases}\tilde f&\textrm{for $t>0$,}\cr 0&\textrm{for $t<0$,}\cr\end{cases}\cr
&g'=\begin{cases}\tilde g&\textrm{for $t>0$,}\cr 0&\textrm{for $t<0$.}\cr\end{cases}\cr}
$$
The extensions are possible if the following compatibility conditions hold
\begin{equation}\eqal{
&\partial_t^i\tilde f|_{t=0}=0\quad &{\rm for}\ \ i\le[\sigma/2],\cr
&\partial_t^i\tilde g|_{t=0}=0\quad &{\rm for}\ \ i\le[\sigma/2+1/2].\cr}
\label{3.7}
\end{equation}
Employing the extensions in (\ref{3.6}) we get the problem
\begin{equation}\eqal{
&v'_t-\nu\Delta v'+\nabla p'=f'\quad &{\rm in}\ \ \R^3\times\R,\cr
&\divv v'=g'\quad &{\rm in}\ \ \R^3\times\R.\cr}
\label{3.8}
\end{equation}
Let $E(x)=c|x|^{-1}$ be the fundamental solution to the Laplace equation. Then any solution to the equation
$$
\Delta\varphi=g'
$$
has the form
\begin{equation}
\varphi=E\star g',
\label{3.9}
\end{equation}
where $\star$ means convolution.

Introducing the new functions
\begin{equation}
v''=v'-\nabla\varphi,\quad p''=p',\quad f''=f'-\nabla\varphi_t+\nu\Delta\varphi
\label{3.10}
\end{equation}
we replace $v''$, $p''$, $f''$ by $v$, $p$, $f$. Then we see that $(v,p)$ is a solution to the problem
\begin{equation}\eqal{
&v_t-\nu\Delta v+\nabla p=f'-\nabla\varphi_t+\nu\Delta\varphi\equiv f,\cr
&\divv v=0.\cr}
\label{3.11}
\end{equation}
To solve (\ref{3.11}) we use the Fourier and the Fourier-Laplace transforms.

First we introduce the Fourier transform and its inverse
\begin{equation}\eqal{
&\tilde f(\xi,\xi_0)\equiv(Ff)(\xi,\xi_0)=\intop\intop_{\R^4}e^{-i\xi_0t-i\xi\cdot x} f(x,t)dxdt\cr
&(F^{-1}f)(x,t)=\intop\intop_{\R^4}e^{i\xi_0t+i\xi\cdot x}f(\xi,\xi_0)d\xi d\xi_0,\cr}
\label{3.12}
\end{equation}
where $x\cdot\xi=x_1\xi_1+x_2\xi_2+x_3\xi_3$.

Next, the Fourier-Laplace transform and its inverse are defined by
\begin{equation}\eqal{
&(F_2f)(\xi,\tau)\equiv\hat f(\xi,\tau)\equiv(Ff_\gamma)(\xi,\tau)=\intop\intop_{\R^4}e^{-st-i\xi\cdot x}f(x,t)dxdt,\cr
&(F_2^{-1}f)(x,t)=\intop\intop_{\R^4}e^{st+i\xi\cdot x}f(\xi,s)d\xi d\xi_0,\cr}
\label{3.13}
\end{equation}
where $s=\gamma+i\xi_0$, $\tau=\sqrt{s+\xi^2}$, $0<\gamma\in\R_+$.

Comparing (\ref{3.12}) and (\ref{3.13}) we have
\begin{equation}\eqal{
&(F_2f)(\xi,\tau)=F(e^{-\gamma t}f)(\xi,\xi_0),\cr
&(F_2^{-1}f)(x,t)=e^{\gamma t}F^{-1}(f)(x,t).\cr}
\label{3.14}
\end{equation}
Applying the Fourier-Laplace transform to (\ref{3.11}) yields
\begin{equation}\eqal{
&(\tau+\xi^2)\hat v(\xi,\tau)+i\xi\hat p(\xi,\tau)=\hat f(\xi,\tau),\cr
&i\xi\cdot\hat v(\xi,\tau)=0.\cr}
\label{3.15}
\end{equation}
Solving (\ref{3.15}) we get
\begin{equation}
\hat v(\xi,\tau)={P(\xi)\hat f(\xi,\tau)\over\tau+|\xi|^2},\quad
\hat p(\xi,\tau)={\xi\cdot\hat f(\xi,\tau)\over i|\xi|^2},
\label{3.16}
\end{equation}
where $P(\xi)=\{\delta_{jk}-\xi_j\xi_k/|\xi|^2\}_{j,k=1,2,3}$.

\begin{lemma}\label{l3.1}
Assume that $p\in(1,\infty)$, $\sigma\in\R_+$ and $f\in B_{p,p}^{\sigma,\sigma/2}(\R^3\times\R)$. Then there exists a unique solution to problem (\ref{3.11}) such that $v\in B_{p,p}^{\sigma+2,\sigma/2+1}(\R^3\times\R)$, $\nabla p\in B_{p,p}^{\sigma,\sigma/2}(\R^3\times\R)$ and
\begin{equation}\eqal{
&\|v\|_{B_{p,p}^{\sigma+2,\sigma/2+1}(\R^3\times\R)}\le c\|f\|_{B_{p,p}^{\sigma,\sigma/2}(\R^3\times\R)},\cr
&\|\nabla p\|_{B_{p,p}^{\sigma,\sigma/2}(\R^3\times\R)}\le c\|f\|_{B_{p,p}^{\sigma,\sigma/2}(\R^3\times\R)}.\cr}
\label{3.17}
\end{equation}
The existence and uniqueness of solutions follow from (\ref{3.16}).
\end{lemma}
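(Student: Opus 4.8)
\emph{Plan; existence and uniqueness.} The idea is to read existence and uniqueness straight off the representation (\ref{3.16}) and to prove (\ref{3.17}) by treating the symbols in (\ref{3.16}) as Fourier multipliers tested against the dyadic pieces $F^{-1}(\varphi_kF\,\cdot\,)$ of the Triebel-type norm of Definition \ref{d2.3}. For the first part: given $f\in B_{p,p}^{\sigma,\sigma/2}(\R^3\times\R)\subset S'(\R^4)$, define $\hat v,\hat p$ by (\ref{3.16}) and set $v=F^{-1}\hat v$, $\nabla p=F^{-1}(i\xi\hat p)$; since $P(\xi)/(s+\nu|\xi|^2)$ is locally bounded and $\xi_\ell\xi_j/|\xi|^2$ has at $\xi=0$ only a Riesz-type singularity, these are well-defined tempered distributions, and by construction $(v,p)$ solves (\ref{3.11}). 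Conversely, if $(v,p)$ solves (\ref{3.11}) in the classes asserted, its Fourier--Laplace transform satisfies (\ref{3.15}); contracting the momentum equation with $\xi$ and using $i\xi\cdot\hat v=0$ forces $\hat p$, hence $\nabla p$, and then $\hat v$ to be given by (\ref{3.16}) --- distributions supported where $s+\nu|\xi|^2=0$, i.e. polynomials in $(x,t)$, being excluded by membership in $B_{p,p}$. This is the last sentence of the statement.

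\emph{Reduction of the estimate.} With $q=p$, Definition \ref{d2.3} turns (\ref{3.17}) into the family of pointwise-in-$k$ inequalities
$$\eqal{
&2^{(\sigma+2)k}\|F^{-1}(\varphi_kFv_\gamma)\|_{L_p}\le c\sum_{|j-k|\le 2}2^{\sigma j}\|F^{-1}(\varphi_jFf_\gamma)\|_{L_p},\cr
&2^{\sigma k}\|F^{-1}(\varphi_kF(\nabla p)_\gamma)\|_{L_p}\le c\sum_{|j-k|\le 2}2^{\sigma j}\|F^{-1}(\varphi_jFf_\gamma)\|_{L_p},\cr}
$$
$k\in\N_0$, with $c$ independent of $k$ and of $\gamma>0$: raising these to the power $p$, summing over $k$ and using that each index meets only finitely many neighbours yields (\ref{3.17}) (the equivalence with the difference definition of the Besov norms being Remark \ref{r2.18} and Lemma \ref{l2.12}). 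For the all-time problem (\ref{3.11}) one may either keep $\gamma>0$ here and let $\gamma\to0^+$ at the end, or use the plain Fourier transform in $(x,t)$ from the outset, the parabolic symbol then being $i\xi_0+\nu|\xi|^2$ with $|i\xi_0+\nu|\xi|^2|\ge\max(|\xi_0|,\nu|\xi|^2)$.

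\emph{The multiplier bounds.} By (\ref{3.16}), $Fv_\gamma=P(\xi)\,r(s,\xi)\,Ff_\gamma$ with $r(s,\xi)=(s+\nu|\xi|^2)^{-1}$, and $F(\partial_\ell p)_\gamma=(\xi_\ell\xi_j/|\xi|^2)\,F(f_j)_\gamma$. The homogeneous degree-zero symbols $P(\xi)$ (Leray projector) and $\xi_\ell\xi_j/|\xi|^2$ (double Riesz transform) define Calder\'on--Zygmund operators bounded on $L_p(\R^3)$, hence on $L_p(\R^3\times\R)$, since they act only in $x$. On $\supp\varphi_k$ one has $|\xi_0|+|\xi|^2\sim 2^{2k}$ and, because $|s+\nu|\xi|^2|\ge\max(\nu|\xi|^2,|\xi_0|)$, also $|s+\nu|\xi|^2|\ge c_1(|\xi|^2+|\xi_0|)\ge c_22^{2k}$; consequently $|r|\sim 2^{-2k}$ there, each derivative of $r$ in $\xi_1,\xi_2,\xi_3$ costs a factor $\le c2^{-k}$ and each derivative in $\xi_0$ a factor $\le c2^{-2k}$ (relative to $|r|$). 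Together with property $3^\circ$ of Definition \ref{d2.1} for $\varphi_k$ and the Leibniz rule, this shows that every frequency derivative of $\varphi_k\,2^{2k}r$ of anisotropic order $\ell$ (spatial derivatives counted once, $\xi_0$-derivatives twice) is $O(2^{-\ell k})$, so that $\varphi_k\,2^{2k}r$ satisfies, after rescaling the frequencies to the unit annulus, the Mikhlin--Lizorkin multiplier conditions with constants uniform in $k$ and $\gamma$. Hence $g\mapsto F^{-1}(\varphi_k\,2^{2k}r\,\hat g)$ is bounded on $L_p(\R^3\times\R)$ with norm $\le c$ independent of $k$ (cf. the kernel estimates in Lemma \ref{l2.19}). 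Since $\varphi_k=\varphi_k\sum_{|j-k|\le2}\varphi_j$ on $\supp\varphi_k$, composing this with the (bounded) Leray projector and distributing the factor $2^{2k}\sim|\xi_0|+|\xi|^2$ --- which on $\supp\varphi_k$ realises simultaneously the $\partial_t$- and the $\Delta$-scaling --- gives the first displayed inequality; the second follows in the same way from $F(\partial_\ell p)_\gamma=(\xi_\ell\xi_j/|\xi|^2)F(f_j)_\gamma$. The piece $k=0$ is handled identically.

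\emph{Main obstacle.} The only genuinely technical step is the uniform (in $k$ and in $\gamma>0$) verification of the Mikhlin--Lizorkin derivative bounds for $\varphi_k\,2^{2k}r$, above all the lower bound for $|s+\nu|\xi|^2|$ and the control of the $\xi,\xi_0$-derivatives of its reciprocal in the regime $|\xi|\ll 2^k$, $|\xi_0|\sim 2^{2k}$. For the whole space this is considerably simpler than the analogous half-space analysis carried out in Lemma \ref{l2.19}: no boundary-layer symbols of type $e_1,e_2$ arise, the Leray and Riesz factors absorb the only degree-zero singularities, and what remains is a genuine parabolic resolvent whose multiplier norm on each dyadic annulus is $O(1)$.
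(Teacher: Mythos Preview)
Your argument is correct, but it follows a genuinely different route from the paper's proof.

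The paper does not invoke any off-the-shelf multiplier theorem. Instead it introduces an auxiliary system $\{\psi_l\}$ with $\psi_l\equiv 1$ on $\supp\varphi_l$, writes
\[
F^{-1}\varphi_k\frac{P(\xi)}{\tau+\xi^2}\hat f
=\sum_{l}F^{-1}\Big(\psi_l\frac{P(\xi)}{\tau+\xi^2}\Big)\varphi_k\varphi_l\hat f,
\]
and then, via the scaling identity of Lemma \ref{l3.4}, Minkowski, H\"older against a weight $(1+|\bar y|_a)^{-d}$, and Parseval, reduces everything to the $W_2^{d,d/2}$-bound of Lemma \ref{l3.2},
\[
\Big\|\psi_l\frac{P(\xi)}{\tau+\xi^2}(2^l\cdot,2^{2l}\cdot)\,\varphi_k(2^l\cdot,2^{2l}\cdot)\Big\|_{W_2^{d,d/2}(\R^4)}\le c\,2^{-2l}\,2^{(d+5-L)|l-k|},
\]
which yields an almost-diagonal estimate summable in $l$ once $L$ is chosen large. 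Your proof instead factors the symbol as $P(\xi)\cdot r(s,\xi)$, disposes of $P(\xi)$ (and of $\xi_\ell\xi_j/|\xi|^2$ for $\nabla p$) by Calder\'on--Zygmund boundedness in the $x$-variables, and then checks the anisotropic Mikhlin conditions for $2^{2k}\varphi_k r$ directly; only the three neighbouring indices $|j-k|\le 1$ appear.

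Your route is shorter and conceptually cleaner for the whole-space problem. The paper's route, however, is deliberately set up so that the identical machinery (auxiliary $\psi_l$, Lemma \ref{l3.4}, Parseval, a Lemma-\ref{l3.2}-type $W_2$ bound) can be rerun in Section \ref{se4} for the half-space, where the symbols $g_{mr}(\xi',s)\,e_1(x_3)$ and $h_{mr}(\xi',s)\,e_0(x_3)$ depend on the normal variable $x_3$ and no standard multiplier theorem applies; see Lemmas \ref{l4.4}--\ref{l4.7}. So the paper trades brevity here for a template that carries the half-space case, while your argument exploits the simplicity of the full-space symbol at the cost of not extending to the boundary problem.
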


\begin{proof}
First we consider
$$\eqal{
&\|v\|_{B_{p,p}^{\sigma+2,\sigma/2+1}(\R^3\times\R)}=\sum_{k=0}^\infty\bigg( \intop_{\R^4}\bigg|2^{(\sigma+2)k}F_2^{-1}\varphi_k{P(\xi)\hat f(\xi,\tau)\over\tau+|\xi|^2}\bigg|^p dxdt\bigg)^{1/p}\cr
&=\sum_{k=0}^\infty\bigg(\intop_{\R^4}\bigg|2^{(\sigma+2)k}F_2^{-1}\varphi_k {P(\xi)\over\tau+\xi^2}F_2f\bigg|^pdxdt\bigg)^{1/p}\equiv I_1,\cr}
$$
where
$$
F_2^{-1}\varphi_k{P(\xi)\over\tau+\xi^2}F_2f=F_2^{-1} \bigg(\varphi_k{P(\xi)\over\tau+\xi^2}F_2f\bigg).
$$
Let us introduce a family of functions $\{\psi_j(\bar\xi)\}$, where $\bar\xi=(\xi,\xi_0)$, such that $\supp\psi_0\subset\{\bar\xi\colon|\bar\xi|_a\le4\}$, $\supp\psi_j\subset\{\bar\xi\colon2^{j-2}\le|\bar\xi|_a\le2^{j+2}\}$, and $\psi_j(\bar\xi)=1$ for $\bar\xi\in\supp\varphi_j$. Then
$$\eqal{
I_1&=\sum_{k=0}^\infty\bigg(\intop_{\R^4}\bigg|\sum_{l=0}^\infty 2^{(\sigma+2)k}F_2^{-1}\psi_l{P(\xi)\over\tau+\xi^2}\varphi_k\varphi_lF_2f\bigg|^p dxdt\bigg)^{1/p}\cr
&=\sum_{k=0}^\infty\bigg(\intop_{\R^4}\bigg|\sum_{l=0}^\infty 2^{(\sigma+2)k}F_2^{-1}\psi_l {P(\xi)\over\tau+\xi^2}F_2F_2^{-1}\varphi_kF_2F_2^{-1}\varphi_lF_2f\bigg|^p dxdt\bigg)^{1/p}.\cr}
$$
Continuing, we have
{\small$$\eqal{
&I_1=\sum_{k=0}^\infty\bigg(\intop_{\R^4}\bigg|\sum_{l=0}^\infty 2^{(\sigma+2)k}F_2^{-1}\psi_l {P(\xi)\over\tau+\xi^2}F_2(F_2^{-1}\varphi_k\star F_2^{-1}\varphi_lF_2f)\bigg|^p dxdt\bigg)^{1/p}\cr
&=\sum_{k=0}^\infty\!\bigg(\intop_{\R^4}\bigg|\!\sum_{l=0}^\infty 2^{(\sigma+2)k}\!\!\!\intop_{\R^4}\!\!d\bar y\bigg(\!\!F_2^{-1}\psi_l{P(\xi)\over\tau+\xi^2}F_2F_2^{-1}\varphi_k(\bar y)(F_2^{-1}\varphi_lF_2f)(\bar x-\bar y)\bigg|^p\!d\bar x\bigg)^{\!\!1/p},\cr}
$$}
where $\bar x=(x,t)$.

We recall the formula
\begin{equation}\eqal{
&\bigg(F_2^{-1}\psi_l{P(\xi)\over\tau+\xi^2}F_2F_2^{-1}\varphi_k\bigg)(\bar y)= F_2^{-1}\bigg(F_2F_2^{-1}\psi_l{P(\xi)\over\tau+\xi^2}F_2F_2^{-1}\varphi_k\bigg) (\bar y)\cr
&=F_2^{-1}F_2\bigg(F_2^{-1}\psi_l{P(\xi)\over\tau+\xi^2}\star F_2^{-1}\varphi_k\bigg)(\bar y)\cr
&=\bigg(F_2^{-1}\psi_l{P(\xi)\over\tau+\xi^2}\star F_2^{-1}\varphi_k\bigg)(\bar y).\cr}
\label{3.18}
\end{equation}
Moreover, we have the notation $f_i(2^l\cdot,2^{2l}\cdot)$, $i=1,2$, where $\cdot$ replaces the corresponding argument in convolution. Then we have (see Lemma \ref{l3.4})
\begin{equation}\eqal{
&[F_2^{-1}(f_1(2^l\cdot,2^{2l}\cdot))\star F_2^{-1}(f_2(2^l\cdot,2^{2l}\cdot))](\bar y)\cr
&=2^{-5l}(F_2^{-1}f_1\star F_2^{-1}f_2)(2^{-l}\tilde y),\cr}
\label{3.19}
\end{equation}
where $\tilde y=(y,2^{-l}y_0)$.

With the help of (\ref{3.18}) and (\ref{3.19}), assuming that $f_1=\psi_l{P(\xi)\over\tau+\xi^2}$, $f_2=\varphi_k$ the expression $I_1$ takes the form
$$\eqal{
I_1&=\sum_{k=0}^\infty\bigg(\intop_{\R^4}\bigg|\sum_{l=0}^\infty 2^{(\sigma+2)k}2^{-5l}\intop_{\R^4}d\bar y\bigg(F_2^{-1}\psi_l{P(\xi)\over\tau+\xi^2}\cr
&\quad\star F_2^{-1}\varphi_k)(2^{-l}\tilde y)(F_2^{-1}\varphi_lF_2f)(\bar x-2^{-l}\tilde y)\bigg|^pd\bar x\bigg)^{1/p}.\cr}
$$
In view of (\ref{3.19}), we obtain
$$\eqal{
I_1&=\sum_{k=0}^\infty\bigg(\intop_{\R^4}d\bar x\bigg|\sum_{l=0}^\infty 2^{(\sigma+2)k}\intop_{\R^4}d\bar y\bigg[F_2^{-1}\psi_l{P(\xi)\over\tau+\xi^2}(2^l\cdot,2^{2l}\cdot)\cr
&\quad\star(F_2^{-1}\varphi_k)(2^l\cdot,2^{2l}\cdot)\bigg](\bar y)(F_2^{-1}\varphi_lF_2f)(\bar x-\bar y)\bigg|^p\bigg)^{1/p}.\cr}
$$
Applying the Minkowski inequality with respect to $\bar x$ yields
$$\eqal{
I_1&\le\sum_{k=0}^\infty\bigg(\sum_{l=0}^\infty\bigg|2^{(\sigma+2)k} \intop_{\R^4}d\bar y\bigg[\bigg(F_2^{-1}\psi_l{P(\xi)\over\tau+\xi^2}\bigg)(2^l\cdot,2^{2l}\cdot)\cr
&\quad\star(F_2^{-1}\varphi_k)(2^l\cdot,2^{2l}\cdot)\bigg](\bar y)\bigg( \intop_{\R^4}d\bar x|F_2^{-1}\varphi_lF_2f|(\bar x-\bar y)\bigg|^p\bigg)^{1/p} \bigg|^p\bigg)^{1/p}\cr
&\equiv I_2.\cr}
$$
Changing variables $\bar z=\bar x-\bar y$ in the integral above with respect to $\bar x$ implies
$$\eqal{
I_2&=\sum_{k=0}^\infty\bigg(\bigg|\sum_{l=0}^\infty 2^{(\sigma+2)k}\intop_{\R^4} d\bar y\bigg[F_2^{-1}\psi_l{P(\xi)\over\tau+\xi^2}\bigg)(2^l\cdot,2^{2l}\cdot)\cr
&\quad\star(F_2^{-1}\varphi_k)(2^l\cdot,2^{2l}\cdot)\bigg](\bar y)\bigg( \intop_{\R^4}d\bar z|(F_2^{-1}\varphi_lF_2)(\bar z)|^p\bigg)^{1/p}\bigg|^p \bigg)^{1/p}.\cr}
$$
Using the H\"older inequality in the integral with respect to $\bar y$ and replacing $\bar z$ by $\bar x$, we obtain
$$\eqal{
I_2&\le\sum_{k=0}^\infty\bigg(\bigg|\sum_{l=0}^\infty 2^{(\sigma+2)k}\bigg(\intop_{\R^4}d\bar y{1\over(1+|\bar y|_a)^d}\bigg)^{1/2}\cdot\cr
&\quad\cdot\bigg(\intop_{\R^4}d\bar y\bigg|\bigg[\bigg(F_2^{-1}\psi_l{P(\xi)\over\tau+\xi^2}\bigg)(2^l\cdot,2^{2l}\cdot) \star(F_2^{-1}\varphi_k)(2^l\cdot,2^{2l}\cdot)\bigg](\bar y)\cdot\cr
&\quad\cdot(1+|\bar y|_a)^{d/2}\bigg|^2\bigg)^{1/2}\bigg|^p\bigg)^{1/p} \bigg(\intop_{\R^4}d\bar x|(F_2^{-1}\varphi_lF_2f)(\bar x)|^p\bigg)^{1/p}\equiv I_3.\cr}
$$
For the power $d>4$ we get
$$\eqal{
I_3&\le c\sum_{k=0}^\infty\bigg(\bigg|\sum_{l=0}^\infty 2^{(\sigma+2)k}\bigg( \intop_{\R^4}d\bar y\bigg|\bigg[\bigg(F_2^{-1}\psi_l{P(\xi)\over\tau+\xi^2}\bigg) (2^l\cdot,2^{2l}\cdot)\cr
&\quad\star(F_2^{-1}\varphi_k)(2^l\cdot,2^{2l}\cdot)\bigg](\bar y)(1+|\bar y|_a^{d/2})\bigg|^2\bigg)^{1/2}\bigg|^p\bigg)^{1/p}\cdot\cr
&\quad\cdot\bigg(\intop_{\R^4}d\bar x|(F_2^{-1}\varphi_lF_2f)(\bar x)|^p\bigg)^{1/p}\equiv I_4.\cr}
$$
By the Parseval identity, we have
$$\eqal{
I_4&=c\sum_{k=0}^\infty\sum_{l=0}^\infty 2^{(\sigma+2)(k-l)}2^{(\sigma+2)l} \bigg\|\bigg(\psi_l{P(\xi)\over\tau+\xi^2}\bigg)(2^l\cdot,2^{2l}\cdot)\cr
&\quad\cdot\varphi_k(2^l\cdot,2^{2l}\cdot)\bigg\|_{W_2^{d,d/2}(\R^4)} \bigg(\intop_{\R^4}d\bar x|(F_2^{-1}\varphi_lF_2f)(\bar x)|^p\bigg)^{1/p}\cr}
$$
In view of Lemma \ref{l3.2} (see below), we have
\begin{equation}
I_4\le c\sum_{k=0}^\infty\sum_{l=0}^\infty 2^{(\sigma+2+d+5-L)|l-k|}2^{\sigma l} \|F_2^{-1}\varphi_lF_2f\|_{L_p(\R^4)}\equiv I_5.
\label{3.20}
\end{equation}
For $L$ sufficiently large, we obtain
$$
I_5\le c\sum_{l=0}^\infty 2^{\sigma l}\|F_2^{-1}\varphi_lF_2f\|_{L_p(\R^4)}.
$$
Hence $(\ref{3.17})_1$ is proved. Similarly we show $(\ref{3.17})_2$. This concludes the proof.
\end{proof}

To prove (\ref{3.20}) we need

\begin{lemma}\label{l3.2}
Let $d$ be an even number such that $d>4$. Then
\begin{equation}\eqal{
I&\equiv\bigg\|\psi_l{P(\xi)\over\tau+\xi^2}(2^l\cdot,2^{2l}\cdot)\varphi_k (2^l\cdot,2^{2l}\cdot)\bigg\|_{W_2^{d,d/2}(\R^4)}\cr
&\le c2^{-2l}2^{(d+5-L)|l-k|},\cr}
\label{3.21}
\end{equation}
where $P(\xi)=\{\delta_{jk}-\xi_j\xi_k/|\xi|^2\}_{j,k=1,2,3}$, $s=\gamma+i\xi_0$, $\tau=\sqrt{s+\xi^2}$ and for $l=0$ the constant $c$ depends on $\gamma$ and $L>0$ may be assumed sufficiently large.
\end{lemma}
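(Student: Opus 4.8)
The plan is to reduce (\ref{3.21}) to an estimate for a rescaled symbol supported in a fixed anisotropic dyadic shell, and then to count derivatives in the $W_2^{d,d/2}$–norm. First I would carry out the substitution $\bar\xi\mapsto(2^l\xi,2^{2l}\xi_0)$ explicitly. Since $P$ is positively homogeneous of degree $0$ one has $P(2^l\xi)=P(\xi)$, and since $\tau(2^l\xi,2^{2l}\xi_0)=2^l\tau_l(\bar\xi)$ with $\tau_l(\bar\xi):=(2^{-2l}\gamma+i\xi_0+|\xi|^2)^{1/2}$, for $l\ge1$ one obtains
$$
\Big(\frac{P(\xi)}{\tau+\xi^2}\Big)(2^l\xi,2^{2l}\xi_0)=2^{-2l}m_l(\bar\xi),\qquad m_l(\bar\xi):=\frac{P(\xi)}{|\xi|^2+2^{-l}\tau_l(\bar\xi)},
$$
which isolates the factor $2^{-2l}$; for $l=0$ the symbol is $P(\xi)/(\tau+|\xi|^2)$ and, as in the statement, the constants will be permitted to depend on $\gamma$. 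Setting $\tilde\psi_l(\bar\xi):=\psi_l(2^l\xi,2^{2l}\xi_0)$ and $\tilde\varphi_j(\bar\xi):=\varphi_j(2^j\xi,2^{2j}\xi_0)$, the construction of the $\psi$'s and property $3^\circ$ of Definition \ref{d2.1} (equivalently Definition \ref{d2.16}) show that $\{\tilde\psi_l\}$ and $\{\tilde\varphi_j\}$ are families supported in the fixed shells $\{1/4\le|\bar\xi|_a\le4\}$ and $\{1/2\le|\bar\xi|_a\le2\}$, with all derivatives up to order $L$ bounded uniformly in the index, and moreover $\varphi_k(2^l\xi,2^{2l}\xi_0)=\tilde\varphi_k(2^{l-k}\xi,2^{2(l-k)}\xi_0)$. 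Hence
$$
I=2^{-2l}\big\|\tilde\psi_l\,m_l\,\tilde\varphi_k(2^{l-k}\cdot,2^{2(l-k)}\cdot)\big\|_{W_2^{d,d/2}(\R^4)}
$$
(with $2^{-2l}$ replaced by a $\gamma$–constant when $l=0$), so it remains to bound this $W_2^{d,d/2}$–norm by $c\,2^{(d+5-L)|l-k|}$.

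Second, I would exploit the support structure: on the common support of $\tilde\psi_l$ and $\tilde\varphi_k(2^{l-k}\cdot,2^{2(l-k)}\cdot)$ one has $|\bar\xi|_a\sim1$ together with $2^{l-k}|\bar\xi|_a\sim1$, so the product vanishes unless $|l-k|$ is bounded by an absolute constant, and on that support $|\bar\xi|_a\sim2^{k-l}\sim1$. Granting (see below) the uniform-in-$l$ bounds for $m_l$ and its $\xi$– and $\xi_0$–derivatives up to order $L$ on this region, I would estimate the $W_2^{d,d/2}$–norm by the Leibniz rule: a $\xi$–derivative landing on $\tilde\varphi_k(2^{l-k}\cdot,2^{2(l-k)}\cdot)$ produces a factor $2^{l-k}$ and a $\xi_0$–derivative a factor $2^{2(l-k)}$, so carrying out the $d$ spatial and $d/2$ temporal differentiations of the $W_2^{d,d/2}$–norm costs at most $2^{d|l-k|}$, while the derivatives falling on $\tilde\psi_l$ and $m_l$ stay uniformly bounded. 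The surplus $L-d$ derivatives controlled by Definition \ref{d2.16} (together with the weight $|\bar\xi|_a^{\mp L}$) and the Jacobian of the $\R^4$–dilation — of anisotropic dimension $5$, exactly the exponent appearing in the factor $2^{-5l}$ of (\ref{3.19}) — then convert into the gain $2^{-(L-d-5)|l-k|}$, and integrating over the support, of anisotropic measure $\sim1$, gives (\ref{3.21}). The analogous estimate for the pressure multiplier $\xi(\xi\cdot)/|\xi|^2$, needed for the pressure estimate in (\ref{3.17}), is obtained in the same manner.

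The hard part will be the uniform estimate for $m_l=P(\xi)/(|\xi|^2+2^{-l}\tau_l)$ and all its derivatives up to order $L$ on the relevant region. Here one uses $\mathrm{Re}\,\tau_l\ge(2^{-2l}\gamma+|\xi|^2)^{1/2}$, hence $\big|\,|\xi|^2+2^{-l}\tau_l\,\big|\ge|\xi|^2+2^{-l}\mathrm{Re}\,\tau_l$ stays bounded below on the support, so that differentiating the denominator never worsens the estimate; the delicate régime is where $\xi$ is small — so that $|\xi_0|\sim1$ and $|\tau_l|\sim1$ — where the homogeneous-of-degree-$0$ factor $P(\xi)$, singular at $\xi=0$, must be controlled. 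This is the place where one has to use that the $W_2^{d,d/2}$–estimate is really required only for the product with $\tilde\psi_l\tilde\varphi_k$, arranging (through the precise choice of the partition of unity) that the relevant products avoid a neighbourhood of $\xi=0$, or else treating the low-$|\xi|$ contribution by a separate argument. A final, purely bookkeeping, point is to match the number $L$ of controlled derivatives against the $d$ spatial orders of $W_2^{d,d/2}$ plus the $5$ coming from the dilation, so as to land precisely on the exponent $d+5-L$.
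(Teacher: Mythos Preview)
Your overall scheme coincides with the paper's: rescale, apply Leibniz, use the $A_{aL}$ property of $\{\varphi_k\}$ to extract powers of $2^{|l-k|}$. Your observation that the supports of $\tilde\psi_l$ and $\tilde\varphi_k(2^{l-k}\cdot,2^{2(l-k)}\cdot)$ meet only when $|l-k|\le C$ is correct and in fact renders the factor $2^{(d+5-L)|l-k|}$ harmless; the paper follows a more general template and does not exploit this.

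You are right to flag the singularity of $P(\xi)=\{\delta_{jk}-\xi_j\xi_k/|\xi|^2\}$ at $\xi=0$ as the crux, and this is a point the paper's proof passes over: it asserts $\big\|\big(\partial_{\xi}^{s'}\partial_{\xi_0}^{s}\tfrac{P(\xi)}{\tau+\xi^2}\big)(2^l\cdot,2^{2l}\cdot)\big\|_{L_4(A)}\le c(\gamma)2^{-2l}$ without comment, yet $|\partial_\xi^{s'}(\xi_j\xi_k/|\xi|^2)|\sim|\xi|^{-|s'|}$ is not in $L_4_{\rm loc}(\R^3)$ for $|s'|\ge1$ (nor in $L_2_{\rm loc}$ for $|s'|\ge2$), and the anisotropic shell $A$ does contain the axis $\{\xi=0\}$. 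Your first proposed remedy, however, cannot work: since $\sum_k\varphi_k\equiv1$ and each anisotropic annulus $\{2^{k-1}\le|\bar\xi|_a\le2^{k+1}\}$ meets $\{\xi=0\}$, no admissible choice of the partition avoids that axis. What is actually needed is to separate the Leray projector from the parabolic resolvent: write $P(\xi)/(\tau+\xi^2)=1/(\tau+\xi^2)-(\xi\otimes\xi/|\xi|^2)\cdot 1/(\tau+\xi^2)$, run your argument for the \emph{smooth} multiplier $1/(\tau+\xi^2)$ (here $\gamma>0$ keeps the denominator away from zero on all of $A$, so the $W_2^{d,d/2}$ estimate is genuine), and handle $\xi_j\xi_k/|\xi|^2$ as a composition of Riesz transforms, which are bounded on $B_{p,q}^{\sigma,\sigma/2}$ by the anisotropic Mikhlin--H\"ormander theorem and commute with the Littlewood--Paley pieces. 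Without some such splitting the $W_2^{d,d/2}$ bound in (\ref{3.21}) is not available as stated.
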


\begin{proof}
To estimate $I$, we express it explicitly as
$$\eqal{
I=&\sum_{\sum_{i=1}^3 s'_i+2s_i\le d}\bigg\|(\partial_\xi^{s'_1}\partial_{\xi_0}^{s_1}\psi_l)(2^l\cdot,2^{2l}\cdot) \bigg(\partial_\xi^{s'_2}\partial_{\xi_0}^{s_2}{P(\xi)\over\tau+\xi^2}\bigg) (2^l\cdot,2^{2l}\cdot)\cr
&\quad(\partial_\xi^{s'_3}\partial_{\xi_0}^{s_3}\varphi_k)(2^l\cdot,2^{2l}\cdot) \bigg\|_{L_2(\R^4)}.\cr}
$$
Since $\supp\psi_l(2^l\cdot,2^{2l}\cdot)\subset\{\bar\xi\colon|\bar\xi|_a\le4\}$ for $l=0$, $\supp\psi_l(2^l\cdot,2^{2l}\cdot)\subset$ \\ $\{\bar\xi\colon 1/4\le|\bar\xi|_a\le 4\}$ for $l\not=0$, and
\begin{equation}
|\partial_\xi^{s'_1}\partial_{\xi_0}^{s_1}\psi_l(2^l\cdot,2^{2l}\cdot)|\le c\quad {\rm for}\ \ \bar\xi\in A,
\label{3.22}
\end{equation}
where
$$\eqal{
A= & \bar\xi\colon|\bar\xi|_a\le 4\quad &{\rm for}\ \ l=0, \cr
A= & \bar\xi\colon 1/4\le|\bar\xi|_a\le4 \quad &{\rm for}\ \  l\not=0. }
$$
Then we have
$$
I\le c\sum_{\sum_{i=2}^3s'_i+2s_i\le d}\bigg\|\bigg(\partial_\xi^{s'_2}\partial_{\xi_0}^{s_2}{P(\xi)\over\tau+\xi^2}\bigg) (2^l\cdot,2^{2l}\cdot)\partial_\xi^{s'_3}\partial_{\xi_0}^{s_3} (2^l\cdot,2^{2l}\cdot)\bigg\|_{L_2(A)}.
$$
Next, we obtain
$$
\bigg\|\bigg(\partial_\xi^{s'_2}\partial_{\xi_0}^{s_2}{P(\xi)\over\tau+\xi^2}\bigg) (2^l\cdot,2^{2l}\cdot)\bigg\|_{L_4(A)}\le c(\gamma)2^{-2l}
$$
and finally, we estimate
$$\eqal{
&\|\partial_\xi^{s'_3}\partial_{\xi_0}^{s_3}\varphi_k(2^l\cdot,2^{2l}\cdot)\|_{L_4(A)} =\|\partial_\xi^{s'_3}\partial_{\xi_0}^{s_3}\varphi_k(2^k\cdot2^{l-k}\cdot, 2^{2k}\cdot2^{2(l-k)}\cdot)\|_{L_4(A)}\cr
&\le c2^{(d+2)|l-k|}\|\partial_y^{s'_3}\partial_{y_0}^{s_3}\varphi_k (2^k\cdot,2^{2k}\cdot)\|_{L_4(B)}\cr
&\le c2^{(d+4-L)|l-k|},\cr}
$$
where
$$\eqal{
B & =\{  \bar y\colon 2^{l-k-2}\le|\bar y|_a\le 2^{l-k+2}\} \quad &{\rm for}\ \ l\not=0, \cr
B & =\{ \bar y\colon|\bar y|_a\le 2^{4-k}\} \quad &{\rm for}\ \ l=0,\ \ L>0,}
$$
 $L$ is a chosen number, and we have been used the fact that $\{\varphi_k\}\in A_{aL}(\R^4)$ (see Definition \ref{d2.16}) and we change variables $y_0=2^{2(l-k)}\xi_0$, $y=2^{l-k}\xi$. Hence (\ref{3.20}) holds. This concludes the proof.
\end{proof}

\begin{theorem}\label{t3.3}
Assume that $f\in W_p^{\sigma,\sigma/2}(\R^3\times\R_+)$, $g\in W_p^{\sigma+1,\sigma/2+1/2}(\R^3\times\R_+)$, $v_0\in W_p^{\sigma+2-2/p}(\R^3)$, $p\in(1,\infty)$, $\sigma\not\in\N$.\\
Then there exists a solution to problem (\ref{3.1}) such that $v\in W_p^{2+\sigma,1+\sigma/2}(\R^3\times\R_+)$, $\nabla p\in W_p^{\sigma,\sigma/2}(\R^3\times\R_+)$ and the estimate holds
\begin{equation}\eqal{
&\|v\|_{W_p^{2+\sigma,1+\sigma/2}(\R^3\times\R_+)}+\|\nabla p\|_{W_p^{\sigma,\sigma/2}(\R^3\times\R_+)}\cr
&\le c(\|f\|_{W_p^{\sigma,\sigma/2}(\R^3\times\R_+)}+ \|g\|_{W_p^{\sigma+1,\sigma/2+1/2}(\R^3\times\R_+)}\cr
&\quad+\|v_0\|_{W_p^{\sigma+2-2/p}(\R^3)},\cr}
\label{3.23}
\end{equation}
where $c$ does not depend on $v$ and $p$.
\end{theorem}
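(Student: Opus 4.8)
\medskip
\noindent\emph{Sketch of the argument.}
The plan is to reduce problem (\ref{3.1}) to the divergence-free whole-line problem (\ref{3.11}), already solved in Lemma~\ref{l3.1}, by the three successive reductions indicated in (\ref{3.3})--(\ref{3.11}), and then to reassemble the estimate (\ref{3.23}). Concretely: first homogenize the initial condition using the time extension $\tilde v_0$ of $v_0$; then extend the resulting right-hand sides by zero to $t<0$; finally remove the divergence by subtracting a Newtonian potential. The substance of the proof is to check that each reduction keeps all data in the spaces demanded by Lemma~\ref{l3.1} and to collect the constants; the one genuinely analytic ingredient is the potential estimate used in the last step.

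First, since $v_0\in W_p^{\sigma+2-2/p}(\R^3)$, Lemma~\ref{l2.9} provides $\tilde v_0\in W_p^{\sigma+2,\sigma/2+1}(\R^3\times\R_+)$ with (\ref{3.3})--(\ref{3.4}). Putting $\tilde v=v-\tilde v_0$ gives (\ref{3.6}); counting parabolic orders, $\tilde v_{0,t}$ and $\Delta\tilde v_0$ lie in $W_p^{\sigma,\sigma/2}$ and $\divv\tilde v_0$ in $W_p^{\sigma+1,\sigma/2+1/2}$, so $\tilde f\in W_p^{\sigma,\sigma/2}(\R^3\times\R_+)$, $\tilde g\in W_p^{\sigma+1,\sigma/2+1/2}(\R^3\times\R_+)$ with norms bounded by the right-hand side of (\ref{3.23}). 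Next, under the compatibility conditions (\ref{3.7}) --- assumed as part of the hypotheses, and automatic when $p(\tfrac\sigma2-[\tfrac\sigma2])<1$ and $p(\tfrac{\sigma+1}2-[\tfrac{\sigma+1}2])<1$ in view of Lemma~\ref{l2.14} --- the zero extensions $f',g'$ of $\tilde f,\tilde g$ to $\R^3\times\R$ belong to $W_p^{\sigma,\sigma/2}(\R^3\times\R)$ and $W_p^{\sigma+1,\sigma/2+1/2}(\R^3\times\R)$, and Lemma~\ref{l2.15} bounds their norms by the right-hand side of (\ref{3.23}).

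Now set $\varphi=E\star g'$ and $v''=v'-\nabla\varphi$, $p''=p'$, as in (\ref{3.9})--(\ref{3.11}); then $\divv v''=0$ and $(v'',p'')$ solves (\ref{3.11}) with right-hand side $f=f'-\nabla\varphi_t+\nu\Delta\varphi$. Granting the potential estimate $\|\nabla\varphi\|_{W_p^{\sigma+2,\sigma/2+1}(\R^3\times\R)}\le c\|g'\|_{W_p^{\sigma+1,\sigma/2+1/2}(\R^3\times\R)}$, this new right-hand side lies in $W_p^{\sigma,\sigma/2}(\R^3\times\R)=B_{p,p}^{\sigma,\sigma/2}(\R^3\times\R)$ (Remark~\ref{r2.18}, Lemma~\ref{l2.13}), so Lemma~\ref{l3.1} yields $v''\in B_{p,p}^{\sigma+2,\sigma/2+1}$, $\nabla p''\in B_{p,p}^{\sigma,\sigma/2}$ with (\ref{3.17}). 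Undoing the substitutions, $v=v''+\nabla\varphi+\tilde v_0$ and $p=p''$, restricted to $\R^3\times\R_+$, solve (\ref{3.1}); adding (\ref{3.17}), (\ref{3.4}), the bounds of the two reductions above and the potential estimate, and using that restriction to $\R^3\times\R_+$ does not increase the relevant norms up to a fixed constant, gives (\ref{3.23}).

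The main obstacle is precisely the potential estimate, and within it the control of $\nabla\varphi_t$ in $W_p^{\sigma,\sigma/2}$. The higher space derivatives $\nabla^k\varphi$ ($k\ge2$) are Calder\'on--Zygmund transforms of $\nabla^{k-2}g'$ and are harmless; but $\nabla\varphi_t$ is the operator with symbol $\xi/|\xi|^2$ applied to $g'_t$, and this symbol is \emph{not} of anisotropic order $-1$: on the dyadic shell $|\bar\xi|_a\sim2^k$ it is unbounded where $|\xi|\to0$ while $|\xi_0|\sim2^{2k}$. Hence one cannot merely invoke elliptic regularity, and the extra half order of time smoothness needed to pass from $g'\in W_p^{\sigma+1,\sigma/2+1/2}$ to $\nabla\varphi\in W_p^{\sigma+2,\sigma/2+1}$ must be extracted from the parabolic structure --- either by estimating the anisotropic Littlewood--Paley pieces of $\varphi$ directly (Definition~\ref{d2.3}) and exploiting that on the dangerous frequency region $g'$ carries additional time regularity, or by replacing the Newtonian potential with a parabolic resolvent adapted to the divergence constraint. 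Once this step is carried out carefully, everything else is bookkeeping with the lemmas of Section~\ref{se2}.
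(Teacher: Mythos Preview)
Your proposal is correct and follows exactly the route the paper takes: the three reductions (\ref{3.3})--(\ref{3.11}), application of Lemma~\ref{l3.1}, then unwinding via the potential estimates (\ref{3.25}) and the extension bound (\ref{3.4}). The paper's own proof simply asserts the three lines of (\ref{3.25}) without further argument; your identification of $\|\nabla\Delta^{-1}g'_t\|_{W_p^{\sigma,\sigma/2}}\le c\|g'\|_{W_p^{1+\sigma,1/2+\sigma/2}}$ as the nontrivial point---and your analysis of why the multiplier $\xi/|\xi|^2$ is not an honest anisotropic symbol of order $-1$---is more scrupulous than the paper itself, which treats this step as routine.
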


\begin{proof}
By Lemma \ref{l3.1} and (\ref{3.10}) we have
$$
\|v''\|_{W_p^{2+\sigma,1+\sigma/2}(\R^4)}\le c\|f''\|_{W_p^{\sigma,\sigma/2}(\R^4)}.
$$
From (\ref{3.10}), it follows
$$
\|v'-\nabla\varphi\|_{W_p^{2+\sigma,1+_\sigma/2}(\R^4)}\le c\|f'-\nabla\varphi_t+\nu\Delta\varphi\|_{W_p^{\sigma,\sigma/2}(\R^4)}.
$$
Simplifying the above inequality yields
\begin{equation}\eqal{
&\|v'\|_{W_p^{2+\sigma,1+\sigma/2}(\R^4)}\le c(\|f'\|_{W_p^{\sigma,\sigma/2}(\R^4)}+ \|\nabla\varphi\|_{W_p^{2+\sigma,1+\sigma/2}(\R^4)}\cr
&\quad+\|\nabla\varphi_t\|_{W_p^{\sigma,\sigma/2}(\R^4)}+ \|\Delta\varphi\|_{W_p^{\sigma,\sigma/2}(\R^4)}).\cr}
\label{3.24}
\end{equation}
Next (\ref{3.9}) implies that
$$
\varphi=\Delta^{-1}g'.
$$
Hence
\begin{equation}\eqal{
&\|\nabla\Delta^{-1}g'\|_{W_p^{2+\sigma,1+\sigma/2}(\R^4)}\le c\|g'\|_{W_p^{1+\sigma,1/2+\sigma/2}(\R^4)},\cr
&\|\nabla\Delta^{-1}g'_t\|_{W_p^{\sigma,\sigma/2}(\R^4)}\le c\|g'\|_{W_p^{1+\sigma,1/2+\sigma/2}(\R^4)},\cr
&\|\Delta\Delta^{-1}g'\|_{W_p^{\sigma,\sigma/2}(\R^4)}\le c\|g'\|_{W_p^{\sigma,\sigma/2}(\R^4)}.\cr}
\label{3.25}
\end{equation}
Using (\ref{3.25}) in (\ref{3.24}) gives
\begin{equation}
\|v'\|_{W_p^{2+\sigma,1+\sigma/2}(\R^4)}\le c\|f'\|_{W_p^{\sigma,\sigma/2}(\R^4)}+c \|g'\|_{W_p^{1+\sigma,1/2+\sigma/2}(\R^4)}.
\label{3.26}
\end{equation}
In view of relation between $v'$, $f'$, $g^i$ and $\tilde v$, $\tilde f$, $\tilde g$, (\ref{3.27}) implies
\begin{equation}
\|\tilde v\|_{W_p^{2+\sigma,1+\sigma/2}(\R^4)}\le c(\|\tilde f\|_{W_p^{\sigma,\sigma/2}(\R^4)}+\|\tilde g\|_{W_p^{1+\sigma,1/2+\sigma/2}(\R^4)}).
\label{3.27}
\end{equation}
Finally, extension (\ref{3.3}) gives
\begin{equation}\eqal{
&\|v\|_{W_p^{2+\sigma,1+\sigma/2}(\R^3\times\R_+)}\le c\|f\|_{W_p^{\sigma,\sigma/2}(\R^3\times\R_+)}\cr
&\quad+c\|g\|_{W_p^{1+\sigma,1/2+\sigma/2}(\R^3\times\R_+)}+ c\|v_0\|_{W_p^{2+\sigma-2/p}(\R^3)}.\cr}
\label{3.28}
\end{equation}
\vskip-12pt
\end{proof}

\begin{lemma}\label{l3.4}
Let $l\in\N$, $\bar y=(y,y_0)$, $y\in\R^n$ and $\tilde y=(y,2^{-l}y_0)$. Then the following equality holds
\begin{equation}\eqal{
&[F_2^{-1}(f_1(2^l\cdot,2^{2l}\cdot))\star F_2^{-1}(f_2(2^l\cdot,2^{2l}\cdot))] (\bar y)\cr
&=2^{-(n+2)l}(F_2^{-1}f_1\star F_2^{-1}f_2)(2^{-l}\tilde y).\cr}
\label{3.29}
\end{equation}
\end{lemma}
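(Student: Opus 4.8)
The statement is a scaling identity, and the plan is to deduce it from the elementary dilation behaviour of $F_2^{-1}$ together with a single change of variables in the convolution integral.

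First I would establish the anisotropic dilation rule for $F_2^{-1}$. For a function $g=g(\xi,\xi_0)$, writing $g^{(l)}(\xi,\xi_0)=g(2^l\xi,2^{2l}\xi_0)$, the substitution $\eta=2^l\xi$, $\eta_0=2^{2l}\xi_0$ in the integral defining $F_2^{-1}$ gives $d\xi\,d\xi_0=2^{-(n+2)l}\,d\eta\,d\eta_0$ --- the exponent is $n+2$, not $n+1$, because the $n$ spatial frequencies are dilated by $2^l$ while the single temporal frequency is dilated by $2^{2l}$ --- and the phase $x\cdot\xi+t\xi_0$ turns into $(2^{-l}x)\cdot\eta+(2^{-2l}t)\eta_0$. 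Hence
$$
(F_2^{-1}g^{(l)})(x,t)=2^{-(n+2)l}(F_2^{-1}g)(2^{-l}x,2^{-2l}t).
$$
The Laplace character of $F_2$ causes no trouble here: $F_2^{-1}$ differs from the plain inverse Fourier transform only by the factor $e^{\gamma t}$, and the dilation commutes with multiplication by this factor in the way required for the rule above to persist.

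Next I would apply this with $g=f_1$ and $g=f_2$ and insert the results into the convolution on the left of (\ref{3.29}): writing $u_i=F_2^{-1}f_i$, it equals
$$
2^{-2(n+2)l}\intop_{\R^{n+1}}u_1\bigl(2^{-l}(y-w),2^{-2l}(y_0-w_0)\bigr)\,u_2\bigl(2^{-l}w,2^{-2l}w_0\bigr)\,dw\,dw_0 .
$$
Then the substitution $w=2^l v$, $w_0=2^{2l}v_0$, whose Jacobian $2^{(n+2)l}$ cancels one of the two factors $2^{-(n+2)l}$, collapses the integral to $2^{-(n+2)l}(u_1\star u_2)(2^{-l}y,2^{-2l}y_0)$. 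Finally I would observe that $(2^{-l}y,2^{-2l}y_0)=2^{-l}(y,2^{-l}y_0)=2^{-l}\tilde y$, which is precisely the right-hand side of (\ref{3.29}).

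I do not expect a genuine obstacle; the argument is pure bookkeeping. The only point needing care is the anisotropy of the scaling --- keeping the spatial weight $2^{l}$ and the temporal weight $2^{2l}$ distinct, so that the Jacobian is $2^{(n+2)l}$ and the final evaluation point is the parabolically rescaled $2^{-l}\tilde y$ with $\tilde y=(y,2^{-l}y_0)$, rather than the isotropic $2^{-l}\bar y$. One should also note that the convolution identity $F_2(u\star v)=F_2u\cdot F_2v$ implicitly invoked when (\ref{3.29}) is used in the proof of Lemma \ref{l3.1} is the Laplace--Fourier convolution theorem for causal functions, which is standard.
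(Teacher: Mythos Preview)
Your proof is correct and follows essentially the same route as the paper's: both arguments amount to the anisotropic change of variables $\eta=2^l\xi$, $\eta_0=2^{2l}\xi_0$ in the inverse transform followed by the matching substitution in the convolution integral. The only difference is organisational---you isolate the dilation rule for $F_2^{-1}$ as a separate preliminary step, whereas the paper writes out all the nested integrals at once and performs the two substitutions sequentially inside them; the bookkeeping is identical.
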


\begin{proof}
Using the definition of convolution we have
$$\eqal{
I_1&=[F_2^{-1}(f_1(2^l\cdot,2^{2l}\cdot))\star F_2^{-1}(f_2(2^l\cdot,2^{2l}\cdot))](\bar y)\cr
&=\intop_{\R^{n+1}}F_2^{-1}(f_1(2^l\cdot,2^{2l}\cdot))(\bar y-\bar z)F_2^{-1}(f_2(2^l\cdot,2^{2l}\cdot))(\bar z)d\bar z,\cr}
$$
where $\bar z=(z,z_0)$, $z\in\R^n$, $z_0\in\R$.

From the definition of the Fourier transform it follows
$$\eqal{
I_1&=\intop_{\R^{n+1}}d\bar z\intop_{\R^{n+1}}e^{i(\bar y-\bar z)\cdot\bar\eta}f_1 (2^l\eta,2^{2l}\eta_0)d\bar\eta\cdot\cr
&\quad\cdot\intop_{\R^{n+1}}e^{i\bar z\bar\vartheta}f_2(2^l\vartheta,2^{2l}\vartheta_0)d\bar\vartheta,\cr}
$$
where $\bar\eta=(\eta,\eta_0)$, $\bar\vartheta=(\vartheta,\vartheta_0)$, $d\bar\eta=d\eta d\eta_0$, $d\bar\vartheta=d\vartheta d\vartheta_0$, $\eta,\vartheta\in\R^n$, $\eta_0,\vartheta_0\in\R$.

Introduce new variables
$$
\eta'_0=2^{2l}\eta_0,\quad \eta'=2^l\eta,\quad \vartheta'_0=2^{2l}\vartheta_0,\quad \vartheta'=2^l\vartheta.
$$
Then $d\bar\eta=2^{-(n+2)l}d\bar\eta'$, $d\bar\vartheta=2^{-(n+2)l}d\bar\vartheta'$, where $\bar\eta'=(\eta',\eta'_0)$, $\bar\vartheta'=(\vartheta',\vartheta'_0)$. Hence $I_1$ equals
$$\eqal{
I_1&=\intop_{\R^{n+1}}d\bar z\intop_{\R^{n+1}}e^{i(y_0-z_0)2^{-2l}\eta'_0+i(y-z)2^{-l}\cdot\eta'}f_1 (\eta',\eta'_0)d\bar\eta'2^{-(n+2)l}\cdot\cr
&\quad\cdot\intop_{\R^{n+1}}e^{iz_02^{-2l}\vartheta'_0+iz2^{-l}\cdot\vartheta'} f_2(\vartheta',\vartheta'_0)d\bar\vartheta'e^{-(n+2)l}\cr
&=2^{-2(n+2)l}\intop_{\R^{n+1}}d\bar z\intop_{\R^{n+1}} e^{i(2^{-2l}y_0-2^{-2l}z_0)\eta'_0+i(2^{-l}y-2^{-l}z)\cdot\eta'}\cdot\cr
&\quad\cdot f_1(\eta',\eta'_0)d\bar\eta'\intop_{\R^{n+1}} e^{i2^{-2l}z_0\vartheta'_0+i2^{-l}z\cdot\vartheta'}f_2(\vartheta',\vartheta'_0) d\bar\vartheta'\cr}
$$
Introducing new variables
$$
z'_0=2^{-2l}z_0,\quad z'=2^{-l}z,\quad \bar z'=(z',z'_0)
$$
we have $d\bar z=2^{(2+n)l}d\bar z'$. Then
$$\eqal{
I_1&=2^{-(n+2)l}\intop_{\R^{n+1}}d\bar z'\intop_{\R^{n+1}} e^{i(2^{-2l}y_0-z'_0)\eta'_0+i(2^{-l}y-z')\cdot\eta'}\cdot\cr
&\quad\cdot f_1(\eta',\eta'_0)d\bar\eta'\intop_{\R^{n+1}} e^{iz'_0\vartheta'_0+iz'\cdot\vartheta'}f_2(\vartheta',\vartheta'_0) d\bar\vartheta'.\cr}
$$
Finally, we introduce
$$
\tilde y=(y,2^{-l}y_0)
$$
Then $I_1$ takes the form
$$
I_1=2^{-(n+2)l}(F_2^{-1}f_1\star F_2^{-1}f_2)(2^{-l}\tilde y).
$$
This ends the proof.
\end{proof}

\section{The Stokes system in the half-space}\label{se4}

In this Section we prove the existence of solutions to the Stokes system with  slip boundary conditions in the half space. Let $\R_+^3=\{x\in\R^3\colon x_3>0\}$. Then the Stokes system has the form
\begin{equation}\eqal{
&v_t-\nu\Delta v+\nabla p=f\quad &{\rm in}\ \ \R_+^3\times\R_+,\cr
&\divv v=g\quad &{\rm in}\ \ \R_+^3\times\R_+,\cr
&v_{3,x_1}+v_{1,x_3}=b_1\ \ {\rm for}\ \ x_3=0\quad &{\rm in}\ \ \R^2\times\R_+,\cr
&v_{3,x_2}+v_{2,x_3}=b_2\ \ {\rm for}\ \ x_3=0\quad &{\rm in}\ \ \R^2\times\R_+,\cr
&v_3=b_3\ \ {\rm for}\ \ x_3=0\quad &{\rm in}\ \ \R^2\times\R_+,\cr
&v|_{t=0}=v_0\quad &{\rm in}\ \ \R_+^3.\cr}
\label{4.1}
\end{equation}
We are looking for solutions to (\ref{4.1}) under the following assumptions
\begin{equation}\eqal{
&f\in W_p^{\sigma,\sigma/2}(\R_+^3\times\R_+),\cr
&g\in W_p^{\sigma+1,\sigma/2+1/2}(\R_+^3\times\R_+),\cr
&b_\alpha\in W_p^{\sigma+1-1/p,\sigma/2+1/2-1/2p}(\R^2\times\R_+),\quad \alpha=1,2,\cr
&b_3\in W^{\sigma+2-1/p,\sigma/2+1-1/2p}(\R^2\times\R_+),\cr
&v_0\in W_p^{\sigma+2-2/p}(\R_+^3),\cr}
\label{4.2}
\end{equation}
where $p\in(1,\infty)$, $\sigma\not\in\N$.

\noindent
Since $v_0\in W_p^{\sigma+2-2/p}(\R_+^3)$ there exists the time extension\break $\tilde v_0\in W_p^{\sigma+2,\sigma/2+1}(\R_+^3\times\R_+)$ such that
\begin{equation}
\tilde v_0|_{t=0}=v_0
\label{4.3}
\end{equation}
and
\begin{equation}
\|\tilde v_0\|_{W_p^{2+\sigma,1+\sigma/2}(\R_+^3\times\R_+)}\le c\|v_0\|_{W_p^{\sigma+2-2/p}(\R_+^3)},
\label{4.4}
\end{equation}
where $c$ does not depend on $v_0$.

Having extension $\tilde v_0$ we can introduce the function
\begin{equation}
\tilde v=v-\tilde v_0
\label{4.5}
\end{equation}
such that $(\tilde v,p)$ is a solution to the Stokes system with vanishing initial data
\begin{equation}\eqal{
&\tilde v_t-\nu\Delta\tilde v+\nabla p=f-\tilde v_{0,t}+\nu\Delta\tilde v_0\equiv\tilde f,\cr
&\divv\tilde v=g-\divv\tilde v_0\equiv\tilde g,\cr
&\tilde v_{3,x_1}+\tilde v_{1,x_3}=b_1-\tilde v_{30,x_1}-\tilde v_{10,x_3}\equiv\tilde b_1,\cr
&\tilde v_{3,x_2}+\tilde v_{2,x_3}=b_2-\tilde v_{30,x_2}-\tilde v_{20,x_3}\equiv\tilde b_2,\cr
&\tilde v_3=b_3-\tilde v_{30}\equiv\tilde b_3,\cr
&\tilde v|_{t=0}=0.\cr}
\label{4.6}
\end{equation}
In view of Lemmas \ref{l2.14} and \ref{l2.15} we can extend functions $\tilde f,\tilde g,\tilde b=(\tilde b_1,\tilde b_2,\tilde b_3)$ by zero for $t<0$ in the same classes assuming the compatibility conditions
\begin{equation}\eqal{
&\partial_t^i\tilde f|_{t=0}=0\quad &{\rm for}\ \ i\le[\sigma/2],\cr
&\partial_t^i\tilde g|_{t=0}=0\quad &{\rm for}\ \ i\le[\sigma/2+1/2],\cr
&\partial_t^i\tilde b_\alpha|_{t=0}=0\quad &{\rm for}\ \ i\le\bigg[{\sigma\over2}+{1\over2}-{1\over p}\bigg],\ \ \alpha=1,2,\cr
&\partial_t^i\tilde b_3|_{t=0}=0\quad &{\rm for}\ \ i\le\bigg[{\sigma\over2}+1-{1\over p}\bigg].\cr}
\label{4.7}
\end{equation}
Denote  extended functions by $f'$, $g'$, $b'$, respectively. Then problem (\ref{4.6}) for extended functions takes the form
\begin{equation}\eqal{
&v'_t-\nu\Delta v'+\nabla p'=f'\quad &{\rm in}\ \ \R_+^3\times\R,\cr
&\divv v'=g'\quad &{\rm in}\ \ \R_+^3\times\R,\cr
&v'_{3,x_\alpha}+v'_{\alpha,x_3}=b'_\alpha,\ \ \alpha=1,2,\ \ {\rm for}\ \ x_3=0\quad &{\rm in}\ \ \R^2\times\R,\cr
&v'_3=b'_3\ \ {\rm for}\ \ x_3=0\quad &{\rm in}\ \ \R^2\times\R.\cr}
\label{4.8}
\end{equation}
Consider the Neumann problem
\begin{equation}\eqal{
&\Delta\varphi'=g'\quad &{\rm in}\ \ \R_+^3,\cr
&{\partial\over\partial x_3}\varphi'|_{x_3}=0\quad &{\rm in}\ \ \R^2\cr}
\label{4.9}
\end{equation}
Introducing the function
\begin{equation}
v''=v'-\nabla\varphi'
\label{4.10}
\end{equation}
we see that $(v'',p')$ is a solution to the problem
\begin{equation}\eqal{
&v''_t-\nu\Delta v''+\nabla p'=f'-\nabla\varphi'_t+\nu\Delta\nabla\varphi'\equiv f'',\cr
&\divv v''=0\cr
&v''_{3,x_\alpha}+v''_{\alpha,x_3}=b'_\alpha-2\varphi'_{,x_\alpha x_3}\equiv b''_\alpha,\ \ \alpha=1,2,\ \ x_3=0,\cr
&v''_3=b'_3\equiv b''_3,\ \ x_3=0.\cr}
\label{4.11}
\end{equation}
Consider the Stokes system
\begin{equation}\eqal{
&v''_t-\nu\Delta v''+\nabla p'=f''\quad &{\rm in}\ \ \R_+^3\times\R,\cr
&\divv v''=0\quad &{\rm in}\ \ \R_+^3\times\R.\cr}
\label{4.12}
\end{equation}
To apply the results from Section \ref{se3} we have to extend problem (\ref{4.12}) to the problem in the whole space.

Since $f''\in W_p^{\sigma,\sigma/2}(\R_+^3\times\R)$, $\sigma\not\in\N$, we have to extend $f''$ by zero for $x_3<0$. For this purpose we examine the norm
$$\eqal{
&\|f''\|_{W_p^{\sigma,\sigma/2}(\R_+^3\times\R)}=\|f''\|_{L_p(\R_+^3\times\R)}\cr
&\quad+\bigg(\intop_\R dt\intop_{\R^2}dx'\intop_{\R^2}dx''\intop_0^\infty dx_3 {|D_x^{[\sigma]}f''(x',x_3,t)-D_{x''}^{[\sigma]}f(x'',x_3,t)|^p\over |x'-x''|^{2+p(\sigma-[\sigma])}}\bigg)^{1/p}\cr
&\quad+\bigg(\intop_\R dt\intop_{\R^2}dx'\intop_0^\infty dx'_3\intop_0^\infty dx''_3{|\partial_{x'_3}^{[\sigma]}f''(x',x'_3,t)- \partial_{x''_3}^{[\sigma]}f''(x',x''_3,t)|^p\over |x'_3-x''_3|^{1+p(\sigma-[\sigma])}}\bigg)^{1/p}\cr
&\quad+\bigg(\intop_{\R_+^3}dx\intop_\R dt'\intop_\R dt'' {|\partial_{t'}^{[\sigma/2]}f''(x,t')-\partial_{t''}^{[\sigma/2]}f''(x,t'')|^p\over |t'-t''|^{1+p(\sigma/2-[\sigma/2])}}\bigg)^{1/p}\cr
&\equiv I_1+I_2+I_3+I_4.\cr}
$$
Let $\bar f''=\begin{cases}f''&{\rm for\ }x_3>0,\cr 0&{\rm for\ }x_3<0.\cr
\end{cases}$

It is  clear that terms $I_1$, $I_2$, $I_4$ holds also for $\bar f''$. To express $I_3$ for $\bar f''$ we need that
\begin{equation}
\bigg(\intop_0^\infty dx_3\intop_{\R^3}dx'dt {|\partial_{x_3}^{[\sigma]}f''|^p\over x_3^{p(\sigma-[\sigma])}}\bigg)^{1/p}
\label{4.13}
\end{equation}
is finite. Similarly as in Lemma \ref{l2.15} the term (\ref{4.13}) is bounded by
$$
\|\bar f''\|_{W_p^{\sigma,\sigma/2}(\R^3\times\R)}.
$$
Then we can transform (\ref{4.12}) to the form
\begin{equation}\eqal{
&\bar v''_t-\nu\Delta\bar v''+\nabla\bar p''=\bar f''\quad {\rm in}\ \ \R^4\cr
&\divv\bar v''=0\cr}
\label{4.14}
\end{equation}
From Theorem \ref{t3.3} we have the existence of solutions to problem (\ref{4.14}) and the estimate
\begin{equation}\eqal{
&\|\bar v''\|_{W_p^{\sigma+2,\sigma/2+1}(\R^4)}+\|\nabla\bar p''\|_{W_p^{\sigma,\sigma/2}(\R^4)}\cr
&\le c\|f''\|_{W_p^{\sigma,\sigma/2}(\R^3\times\R)}.\cr}
\label{4.15}
\end{equation}
Introducing new functions
\begin{equation}
u=v''-\bar v'',\quad q=p'-\bar p''
\label{4.16}
\end{equation}
we see that $(u,q)$ is a solution to the problem
\begin{equation}\eqal{
&u_t-\nu\Delta u+\nabla q=0\cr
&\divv u=0\cr
&u_{3,x_\alpha}+u_{\alpha,x_3}=b''_\alpha-\bar v''_{3,x_\alpha}-\bar v''_{\alpha,x_3}\equiv d_\alpha,\ \ \alpha=1,2,\cr
&u_3=b''_3-\bar v''_3\equiv d_3.\cr}
\label{4.17}
\end{equation}
To solve problem (\ref{4.17}) we use the Fourier-Laplace transform
\begin{equation}
\tilde u(\xi,x_3,s)=(F_2u)(\xi,x_3,s)=\intop_0^\infty e^{-st}\intop_{\R^2}e^{-ix'\cdot\xi}u(x,t)dx'dt,
\label{4.18}
\end{equation}
where $s=\gamma+i\xi_0$, $\gamma\in\R_+$, $\xi_0\in\R$, $\xi=(\xi_1,\xi_2)$, $x'=(x_1,x_2)$, $x=(x_1,x_2,x_3)$, $x'\cdot\xi=x_1\xi_1+x_2\xi_2$.

Applying the Fourier-Laplace transform (\ref{4.18}) to system $(\ref{4.17})_{1,2}$ we obtain
\begin{equation}\eqal{
&\nu\bigg(-{d^2\over dx_3^2}+\tau^2\bigg)\tilde u_\alpha+i\xi_\alpha\tilde q=0,\ \ \alpha=1,2,\cr
&\nu\bigg(-{d^2\over dx_3^2}+\tau^2\bigg)\tilde u_3+{d\tilde q\over dx_3}=0,\cr
&i\xi_1\tilde u_1+i\xi_2\tilde u_2+{d\tilde u_3\over dx_3}=0,\cr
&\tilde u\to 0,\ \ \tilde q\to 0\quad {\rm as}\quad x_3\to+\infty,\cr}
\label{4.19}
\end{equation}
where $\tau^2={s\over\nu}+\xi^2$, $\xi^2=\xi_1^2+\xi_2^2$, $\arg\tau\in(-\pi/4,\pi/4)$ and conditions $(\ref{4.19})_4$ are called the Shapiro-Lopatinskii conditions.

Solutions of (\ref{4.19}) have the form (see \cite{S2})
\begin{equation}\eqal{
&\tilde u=\Phi(\xi,s)e^{-\tau x_3}+\varphi(\xi,s)(i\xi_1,i\xi_2,-|\xi|) e^{-|\xi|x_3},\cr
&\tilde q=-s\varphi(\xi,s)e^{-|\xi|x_3},\cr}
\label{4.20}
\end{equation}
where $\Phi(\xi,s)=(\Phi_1,\Phi_2,(i\xi_1\Phi_1+i\xi_2\Phi_2)/\tau)$ and $\Phi_1$, $\Phi_2$, $\varphi$ are arbitrary parameters which must be calculated from boundary conditions $(\ref{4.17})_{3,4}$.

Applying the Fourier-Laplace transform to $(\ref{4.17})_{3,4}$ yields
\begin{equation}\eqal{
&i\xi_1\tilde u_3+\tilde u_{1,x_3}=\tilde d_1,\cr
&i\xi_2\tilde u_3+\tilde u_{2,x_3}=\tilde d_2,\cr
&\tilde u_3=\tilde d_3.\cr}
\label{4.21}
\end{equation}
Using (\ref{4.20}) in (\ref{4.21}) yields
\begin{equation}\eqal{
&(\tau^2+\xi_1^2)\Phi_1+\xi_1\xi_2\Phi_2+2i\xi_1|\xi|\tau\varphi=-\tau\tilde d_1,\cr
&\xi_1\xi_2\Phi_1+(\tau^2+\xi_2^2)\Phi+2i\xi_2|\xi|\tau\varphi=-\tau\tilde d_2,\cr
&i\xi_1\Phi_1+i\xi_2\Phi_2-|\xi|\tau\varphi=\tau\tilde d_3.\cr}
\label{4.22}
\end{equation}
Eliminating $\varphi$ from (\ref{4.22}) gives
\begin{equation}\eqal{
&(\tau^2-\xi_1^2)\Phi_1-\xi_1\xi_2\Phi_2=2i\xi_1\tau\tilde d_3-\tau\tilde d_1\equiv\tilde B_1,\cr
&-\xi_1\xi_2\Phi_1+(\tau^2-\xi_2^2)\Phi_2=2i\xi_2\tau\tilde d_3-\tau\tilde d_2\equiv\tilde B_2,\cr
&|\xi|\tau\varphi=i(\xi_1\Phi_1+\xi_2\Phi_2)-\tau\tilde d_3.\cr}
\label{4.23}
\end{equation}
From $(\ref{4.23})_{1,2}$ we calculate
\begin{equation}\eqal{
&\Phi_1={(\tau^2-\xi_2^2)\tilde B_1+\xi_1\xi_2\tilde B_2\over D},\cr
&\Phi_2={\xi_1\xi_2\tilde B_1+(\tau^2-\xi_1^2)\tilde B_2\over D},\cr}
\label{4.24}
\end{equation}
where $D=\tau^2(\tau^2-\xi^2)={s\over\nu}\tau^2$ because $\tau^2={s\over\nu}+\xi^2$.

Using (\ref{4.24}) in $(\ref{4.23})_3$ yields
\begin{equation}
\varphi={i(\xi_1\tilde B_1+\xi_2\tilde B_2)\over|\xi|\tau s/\nu}-{1\over|\xi|} \tilde d_3.
\label{4.25}
\end{equation}
Using the expressions of $\Phi_1$, $\Phi_2$ and $\varphi$ in (\ref{4.20}) we obtain
$$\eqal{
\tilde u_1&={(\tau^2-\xi_2^2)\tilde B_1+\xi_1\xi_2\tilde B_2\over\tau^2s/\nu} e^{-\tau x_3}-{\xi_1^2\tilde B_1+\xi_1\xi_2\tilde B_2\over|\xi|\tau s/\nu} e^{-|\xi|x_3}-{i\xi_1\over|\xi|}\tilde d_3e^{-|\xi|x_3}\cr
&=\bigg[{\tau^2-\xi_2^2\over\tau^2s/\nu}e^{-\tau x_3}-{\xi_1^2\over|\xi|\tau s/\nu}e^{-|\xi|x_3}\bigg]\tilde B_1\cr
&\quad+\bigg[{\xi_1\xi_2\over\tau^2 s/\nu}e^{-\tau x_3}-{\xi_1\xi_2\over|\xi|\tau s/\nu}e^{-|\xi|x_3}\bigg]\tilde B_2-{i\xi_1\over|\xi|}\tilde d_3e^{-|\xi|x_3}.\cr}
$$
Consider the coefficient next to $\tilde B_1$. We write it in the form
$$\eqal{
&{\tau^2-\xi^2\over\tau^2s/\nu}e^{-\tau x_3}+\xi_1^2\bigg({1\over\tau^2s/\nu}e^{-\tau x_3}-{1\over|\xi|\tau s/\nu} e^{-|\xi|x_3}\bigg)\cr
&={1\over\tau^2}e^{-\tau x_3}+\xi_1^2\bigg({1\over\tau^2s/\nu}-{1\over|\xi|\tau s/\nu}\bigg)e^{-\tau x_3}+{\xi_1^2\over|\xi|\tau s/\nu}(e^{-\tau x_3}-e^{-|\xi|x_3})\cr
&={1\over\tau^2}e^{-\tau x_3}+{\xi_1^2\over\tau}{|\xi|-\tau\over\tau|\xi|s/\nu} e^{-\tau x_3}+{\xi_1^2(\tau-|\xi|)\over|\xi|\tau x/\nu} {e^{-\tau x_3}-e^{-|\xi|x_3}\over\tau-|\xi|}\cr
&={1\over\tau^2}e^{-\tau x_3}-{\xi_1^2(\tau^2-\xi^2)\over\tau^2|\xi|s/\nu(\tau+|\xi|)}e^{-\tau x_3}+ {\xi_1^2(\tau^2-\xi^2)\over|\xi|\tau s/\nu(\tau+|\xi|)}{e^{-\tau x_3}-e^{-|\xi|x_3}\over\tau-|\xi|}\cr
&={1\over\tau^2}e^{-\tau x_3}-{\xi_1^2\over\tau^2|\xi|(\tau+|\xi|)}e^{-\tau x_3} +{\xi_1^2\over\tau|\xi|(\tau+|\xi|)}{e^{-\tau x_3}-e^{-|\xi|x_3}\over\tau-|\xi|}.\cr}
$$
Consider the coefficient next to $\tilde B_2$. We express it in the form
$$\eqal{
&\xi_1\xi_2\bigg[\bigg({1\over\tau^2s/\nu}-{1\over|\xi|\tau s/\nu}\bigg) e^{-\tau x_3}+{\tau-|\xi|\over|\xi|\tau s/\nu}{e^{-\tau x_3}-e^{-|\xi|x_3}\over\tau-|\xi|}\bigg]\cr
&=\xi_1\xi_2\bigg[{1\over\tau s/\nu}{|\xi|-\tau\over\tau|\xi|}e^{-\tau x_3}+ {1\over|\xi|\tau(\tau+|\xi|)}{e^{-\tau x_3}-e^{-|\xi|x_3}\over\tau-|\xi|}\bigg]\cr
&=\xi_1\xi_2\bigg[-{1\over\tau^2|\xi|(\tau+|\xi|)}e^{-\tau x_3}+{1\over|\xi|\tau(\tau+|\xi|)}{e^{-\tau x_3}-e^{-|\xi|x_3}\over\tau-|\xi|}\bigg].\cr}
$$
Introduce the notation
$$
e_0(x_3)=e^{-\tau x_3},\quad e_1(x_3)={e^{-\tau x_3}-e^{-|\xi|x_3}\over\tau-|\xi|},\quad e_2(x_3)=e^{-|\xi|x_3}.
$$
Then $\tilde u_1$ takes the form
\begin{equation}\eqal{
\tilde u_1&=\bigg[{1\over\tau^2}e_0-{\xi_1^2\over\tau^2|\xi|(\tau+|\xi|)}e_0+ {\xi_1^2\over\tau|\xi|(\tau+|\xi|)}e_1\bigg]\tilde B_1\cr
&\quad+\xi_1\xi_2\bigg(-{1\over\tau^2|\xi|(\tau+|\xi|)}e_0+ {1\over|\xi|\tau(\tau+|\xi|)}e_1\bigg)\tilde B_2-{i\xi_1\over|\xi|}\tilde d_3e_2\cr}
\label{4.26}
\end{equation}
Similarly,
\begin{equation}\eqal{
\tilde u_2&=\bigg(-{\xi_1\xi_2\over\tau^2|\xi|(\tau+|\xi|)}e_0+ {\xi_1\xi_2\over\tau|\xi|(\tau+|\xi|)}e_1\bigg)\tilde B_1\cr
&\quad+\bigg({1\over\tau^2}e_0-{\xi_2^2\over\tau^2|\xi|(\tau+|\xi|)}e_0+ {\xi_2^2\over\tau|\xi|(\tau+|\xi|)}e_1\bigg)\tilde B_2-{i\xi_2\over|\xi|}\tilde d_3e_2.\cr}
\label{4.27}
\end{equation}
Next, we consider
\begin{equation}\eqal{
\tilde u_3&=(i\xi_1\Phi_1+i\xi_2\Phi_2)/\tau e^{-\tau x_3}-\varphi|\xi|e^{-|\xi|x_3}\cr
&=\bigg[{i\xi_1\over\tau}{(\tau^2-\xi_2^2)\tilde B_1+\xi_1\xi_2\tilde B_2\over \tau^2s/\nu}+{i\xi_2\over\tau}{\xi_1\xi_2\tilde B_1+(\tau^2-\xi_1^2)\tilde B_2 \over\tau^2s/\nu}\bigg]e^{-\tau x_3}\cr
&\quad-|\xi|{i(\xi_1\tilde B_1+\xi_2\tilde B_2)\over|\xi|\tau s/\nu} e^{-|\xi|x_3}+\tilde d_3e^{-|\xi|x_3}\cr
&=\bigg[\bigg({i\xi_1\over\tau^3s/\nu}(\tau^2-\xi_2^2)+{i\xi_2\over\tau} {\xi_1\xi_2\over\tau^2s/\nu}\bigg)e^{-\tau x_3}-{i\xi_1\over\tau s/\nu} e^{-|\xi|x_3}\bigg]\tilde B_1\cr
&\quad+\bigg[\bigg({i\xi_1\over\tau}{\xi_1\xi_2\over\tau^2s/\nu}+{i\xi_2\over\tau} {(\tau^2-\xi_1^2)\over\tau^2s/\nu}\bigg)e^{-\tau x_3}-{i\xi_2\over\tau s/\nu} e^{-|\xi|x_3}\bigg]\tilde B_2+\tilde d_3e^{-|\xi|x_3}\cr
&=\bigg[{i\xi_1\over\tau s/\nu}e^{-\tau x_3}-{i\xi_1\over\tau s/\nu}e^{-|\xi|x_3}\bigg]\tilde B_1+\bigg[{i\xi_2\over\tau s/\nu}e^{-\tau x_3}-{i\xi_2\over\tau s/\nu}e^{-|\xi|x_3}\bigg]\tilde B_2\cr
&\quad+\tilde d_3e^{-|\xi|x_3}\cr
&={i\xi_1\over\tau s/\nu}(\tau-|\xi|){e^{-\tau x_3}-e^{-|\xi|x_3}\over\tau-|\xi|} \tilde B_1+{i\xi_2\over\tau s/\nu}(\tau-|\xi|){e^{-\tau x_3}-e^{-|\xi|x_3}\over\tau-|\xi|}\tilde B_2\cr
&\quad+\tilde d_3e^{-|\xi|x_3}={i\xi_1\over\tau(\tau+|\xi|)}e_1\tilde B_1+ {i\xi_2\over\tau(\tau+|\xi|)}e_1\tilde B_2+\tilde d_3e_2.\cr}
\label{4.28}
\end{equation}
Finally, we calculate
\begin{equation}
\tilde q=-{i\nu(\xi_1\tilde B_1+\xi_2\tilde B_2)\over|\xi|\tau}e^{-|\xi|x_3}+ {s\over|\xi|}\tilde d_3e^{-|\xi|x_3}.
\label{4.29}
\end{equation}
Using the form of $\tilde B_1$, $\tilde B_2$ and expressing $e_2$ in terms of $e_0$ and $e_1$ we have
\begin{equation}\eqal{
\tilde u_1&=\bigg[{1\over\tau^2}e_0-{\xi_1^2\over\tau^2|\xi|(\tau+|\xi|)}e_0+ {\xi_1^2\over\tau|\xi|(\tau+|\xi|)}e_1\bigg] (2i\xi_1\tau\tilde d_3-\tau\tilde d_1)\cr
&\quad+\xi_1\xi_2\bigg(-{1\over\tau^2|\xi|(\tau+|\xi|)}e_0+ {1\over|\xi|\tau(\tau+|\xi|)}e_1\bigg)(2i\xi_2\tau\tilde d_3-\tau\tilde d_2)\cr
&\quad-{i\xi_1\over|\xi|}\tilde d_3(e_0-(\tau-|\xi|)e_1)\cr
&=-{\xi_1^2\over|\xi|(\tau+|\xi|)}e_1\tilde d_1-{\xi_1\xi_2\over|\xi|(\tau+|\xi|)}e_1\tilde d_2+i\xi_1{|\xi|^2+\tau^2\over|\xi|(\tau+|\xi|)}e_1\tilde d_3\cr
&\quad+\bigg(-{1\over\tau}+{\xi_1^2\over\tau|\xi|(\tau+|\xi|)}\bigg)e_0\tilde d_1+{\xi_1\xi_2\over\tau|\xi|(\tau+|\xi|)}e_0\tilde d_2+{i\xi_1(|\xi|-\tau)\over|\xi|(\tau+|\xi|)}e_0\tilde d_3\cr
&\equiv\sum_{r=1}^3(g_{1r}e_1\tilde d_r+h_{1r}e_0\tilde d_r).\cr}
\label{4.30}
\end{equation}
Next, we calculate
\begin{equation}\eqal{
\tilde u_2&=\bigg(-{\xi_1\xi_2\over\tau^2|\xi|(\tau+|\xi|)}e_0+ {\xi_1\xi_2\over\tau|\xi|(\tau+|\xi|)}e_1\bigg)(2i\xi_1\tau\tilde d_3-\tau\tilde d_1)\cr
&\quad+\bigg({1\over\tau^2}e_0-{\xi_2^2\over\tau^2|\xi|(\tau+|\xi|)}e_0+ {\xi_2^2\over\tau|\xi|(\tau+|\xi|)}e_1\bigg)(2i\xi_2\tau\tilde d_3-\tau\tilde d_2)\cr
&\quad-{i\xi_2\over|\xi|}\tilde d_3(e_0-(\tau-|\xi|)e_1)\cr
&=-{\xi_1\xi_2\over|\xi|(\tau+|\xi|)}e_1\tilde d_1-{\xi_2^2\over|\xi|(\tau+|\xi|)}e_1\tilde d_2+i\xi_2{|\xi|^2+\tau^2\over|\xi|(\tau+|\xi|)}e_1\tilde d_3\cr
&\quad-{\xi_1\xi_2\over\tau|\xi|(\tau+|\xi|)}e_0\tilde d_1+\bigg(-{1\over\tau}+{\xi_2^2\over\tau|\xi|(\tau+|\xi|)}\bigg)e_0\tilde d_2\cr
&\quad+{i\xi_2(|\xi|-\tau)\over|\xi|(\tau+|\xi|)}e_0\tilde d_3\cr
&\equiv\sum_{r=1}^3(g_{2r}e_1\tilde d_r+h_{2r}e_0\tilde d_r).\cr}
\label{4.31}
\end{equation}
From (\ref{4.28}) we have
\begin{equation}\eqal{
\tilde u_3&={i\xi_1\over\tau(\tau+|\xi|)}e_1(2i\xi_1\tau\tilde d_3-\tau\tilde d_1)\cr
&\quad+{i\xi_2\over\tau(\tau+|\xi|)}e_1(2i\xi_2\tau\tilde d_3-\tau\tilde d_2)+\tilde d_3(e_0-(\tau-|\xi|)e_1)\cr
&=-{i\xi_1\over\tau+|\xi|}e_1\tilde d_1-{i\xi_2\over\tau+|\xi|}e_1\tilde d_2- {\tau^2+|\xi|^2\over\tau+|\xi|}e_1\tilde d_3+e_0\tilde d_3\cr
&\equiv\sum_{r=1}^3(g_{3r}e_1\tilde d_r+g_{3r}e_0\tilde d_r)\cr}
\label{4.32}
\end{equation}
Finally, (\ref{4.29}) yields
\begin{equation}\eqal{
\tilde q&=\bigg[-{i\nu\xi_1\over|\xi|\tau}(2i\xi_1\tau\tilde d_3-\tau\tilde d_1)-{i\nu\xi_2\over|\xi|\tau}(2i\xi_2\tau\tilde d_3-\tau\tilde d_2)\cr
&\quad+{s\over|\xi|}\tilde d_3\bigg](e_0-(\tau-|\xi|)e_1)\cr
&=-{i\nu\xi_1(\tau-|\xi|)\over|\xi|}e_1\tilde d_1-{i\nu\xi_2(\tau-|\xi|)\over|\xi|}e_1\tilde d_2\cr
&\quad+\nu{(\tau^2+\xi^2)(\tau-|\xi|)\over|\xi|}e_1\tilde d_3+{i\nu\xi_1\over|\xi|}e_0\tilde d_1+{i\nu\xi_2\over|\xi|}e_0\tilde d_2\cr
&\quad-{\nu(\tau^2+|\xi|^2)\over|\xi|}e_0\tilde d_3\cr
&\equiv\sum_{r=1}^3(g_{4r}e_1\tilde d_r+h_{4r}e_0\tilde d_r)\cr}
\label{4.33}
\end{equation}

\begin{lemma}\label{l4.1}
Let $\tau=\sqrt{\gamma+i\xi_0+\xi^2}$, $\xi^2=\xi_1^2+\xi_2^2$. Then the following estimates hold:
\begin{itemize}
\item[(1)] $|\partial_{\xi_0}g_{mr}|\le{c\over|\tau|^2}$, $|\partial_{\xi_0}^2g_{mr}|\le{c\over|\tau|^4}$, $m=1,2,3$, $r=1,2$.
\item[(2)] $|\partial_{xi'}^1g_{mr}|\le c/|\tau|$, $|\partial_{\xi'}^2g_{mr}|\le c/|\xi|\,|\tau|$, \ $|\partial_{\xi'}^3g_{mr}|\le{c\over|\xi|^2|\tau|}+{c\over|\xi|\,|\tau|^2}$,\\ $|\partial_{\xi'}^4g_{mr}|\le{c\over|\xi|^3|\tau|}+{c\over|\xi|^2|\tau|^2}+ {c\over|\xi|\,|\tau|^3}$, $m=1,2,3$, $r=1,2$.
\item[(3)] $|\partial_{\xi_0}g_{mr}|\le c/|\tau|$, $|\partial_{\xi_0}^2g_{mr}|\le c/|\tau|^3$, $r=3$.
\item[(4)] $|\partial_{\xi'}g_{m3}|\le c+c|\tau|/|\xi|$,\\
$|\partial_{\xi'}^2g_{m3}|\le c/|\tau|+c/|\xi|+c|\tau|/|\xi|^2$,\\
$|\partial_{\xi'}^3g_{m3}|\le c/|\xi|(|\tau|+|\xi|)+c/(|\tau|+|\xi|)^2$\\
$|\partial_{\xi'}^4g_{m3}|\le{c\over|\xi|^2(|\tau|+|\xi|)}+ {c\over|\xi|(|\tau|+|\xi|)^2}$.
\item[(5)] $|h_{mr,\xi_0}|\le c/|\tau|^3$, $|h_{mr,\xi_0\xi_0}|\le c/|\tau|^5$, $m=1,2,3$, $r=1,2$.
\item[(6)] $|\partial_{\xi'}h_{mr}|\le c/|\tau|^2$, $|\partial_{\xi'}^2h_{mr}|\le c/|\tau|^3+{c\over|\tau|^2|\xi|}$,\\
$|\partial_{\xi'}^3h_{mr}|\le c/|\tau|^4+{c\over|\tau|^3|\xi|}+{c\over|\tau|^2|\xi|^2}$,\\
$|\partial_{\xi'}^4h_{mr}|\le c/|\tau|^5+{c\over|\tau|^4|\xi|}+{c\over|\tau|^3|\xi|^2}+ {c\over|\tau|^2|\xi|^3}$, $m=1,2,3$, $r=1,2$.
\item[(7)] $|h_{m3,\xi_0}|\le c/|\tau|^2$, $|h_{m3,\xi_0\xi_0}|\le c/|\tau|^4$, $m=1,2$, $|\partial_{\xi'}h_{m3}|\le{c\over|\tau|}+{c\over|\xi|}$,\\
$|\partial_{\xi'}^2h_{m3}|\le{c\over|\tau|^2}+{c\over|\xi|^2}$,\\
$|\partial_{\xi'}^3h_{m3}|\le{c\over|\xi|^2|\tau|}+{c\over|\xi|\,|\tau|^2}$,\\
$|\partial_{\xi'}^4h_{m3}|\le{c\over|\xi|^3|\tau|}+{c\over|\xi|^2|\tau|^2}+ {c\over|\xi|\,|\tau|^3}$, $m=1,2$.
\end{itemize}
\end{lemma}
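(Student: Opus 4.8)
The plan is to reduce the statement to elementary differentiation bounds for a short list of building-block symbols and then to combine them by the Leibniz rule. Reading off the coefficients from (\ref{4.30})--(\ref{4.33}) and using $\tau^2-|\xi|^2=s/\nu$ to rewrite $\tau^2+|\xi|^2=2|\xi|^2+(\tau-|\xi|)(\tau+|\xi|)$ wherever it occurs, one sees that every $g_{mr}$ and $h_{mr}$ is a finite linear combination of monomials of the form
$$
\xi_1^{a_1}\xi_2^{a_2}\,\tau^{-b}\,|\xi|^{-c}\,(\tau+|\xi|)^{-d}\,(\tau-|\xi|)^{e},\qquad a_1,a_2,b,c,d,e\in\N_0 .
$$
Each such monomial is homogeneous of a definite parabolic degree under $\xi\mapsto\lambda\xi$, $s\mapsto\lambda^2s$ (so $\tau\mapsto\lambda\tau$, $\xi_0\mapsto\lambda^2\xi_0$), and every line of (1)--(7) is of the form ``$|\tau|$ or $|\xi|$ raised to the parabolic degree left after the indicated differentiation''. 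Thus the lemma is a power-counting statement, the only real decisions being, for each surviving term, whether a tangential derivative of a $(\tau+|\xi|)^{-1}$-factor should be charged to $|\tau|$ or to $|\xi|$.

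The decisive preliminary, which I would establish first, is a pair of non-cancellation facts. Since $\tau^2=\gamma/\nu+i\xi_0/\nu+\xi^2$ one has ${\rm Re}(\tau^2)\ge\xi^2$, hence $|\tau|\ge|\xi|$; and because $\arg\tau\in(-\pi/4,\pi/4)$ one has ${\rm Re}\,\tau\ge|\tau|/\sqrt2>0$, so that ${\rm Re}(\tau+|\xi|)\ge c(|\tau|+|\xi|)$ and therefore
$$
|\tau+|\xi||\ge c(|\tau|+|\xi|),\qquad \text{whence}\quad \frac1{|\tau+|\xi||}\le\frac c{|\tau|}\ \text{ and }\ \frac1{|\tau+|\xi||}\le\frac c{|\xi|}.
$$
This is the only structurally essential point: because $\tau$ cannot point nearly opposite to $|\xi|$, the factor $(\tau+|\xi|)^{-1}$ behaves like $(|\tau|+|\xi|)^{-1}$ and its derivatives can be absorbed into whichever of $|\tau|$, $|\xi|$ a given line of the lemma calls for; likewise $|\tau-|\xi||\le|\tau|+|\xi|\le2|\tau|$. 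The remaining preliminaries are the elementary rules already used in the proof of Lemma \ref{l2.19}: $\partial_{\xi_0}\tau=i/(2\tau)$, so a $\xi_0$-derivative of a negative power of $\tau$ merely lowers that power by $2$, and $|\partial_{\xi_0}^2\tau|\le c|\tau|^{-3}$; and $\partial_{\xi_\beta}\tau=\xi_\beta/\tau$, $\partial_{\xi_\beta}|\xi|=\xi_\beta/|\xi|$, whence by induction $|\partial_{\xi'}^{k'}\tau|\le c|\tau|^{1-k'}$ and $|\partial_{\xi'}^{k'}|\xi||\le c|\xi|^{1-k'}$ for $k'\ge1$.

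Given these, the estimates would follow by the Leibniz rule applied monomial by monomial: a $\xi_0$-derivative costs a net factor $\le c|\tau|^{-2}$ (two of them $\le c|\tau|^{-4}$); a tangential derivative falling on a numerator $\xi_1^{a_1}\xi_2^{a_2}$ or on a power $\tau^{-b}$ does not worsen the count, one falling on $|\xi|^{-c}$ costs $|\xi|^{-1}$, and one falling on $(\tau+|\xi|)^{-d}$ costs $(|\tau|+|\xi|)^{-1}$. Collecting the contributions of the dominant monomial(s) of each coefficient and invoking the non-cancellation bound to trade $(\tau+|\xi|)^{-1}$ for $|\tau|^{-1}$ or $|\xi|^{-1}$ as required then reproduces (1)--(7); for instance $g_{11}=-\xi_1^2/(|\xi|(\tau+|\xi|))$ has numerator degree $2$ and one factor each of $|\xi|^{-1}$ and $(\tau+|\xi|)^{-1}$, so after the numerator absorbs two powers of $|\xi|$ one gets $|\partial_{\xi'}^4 g_{11}|\le c|\xi|^{-3}|\tau|^{-1}+c|\xi|^{-2}|\tau|^{-2}+c|\xi|^{-1}|\tau|^{-3}$, which is the fourth line of (2). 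The coefficients with $r=3$ (and $h_{m3}$) carry a numerator one parabolic degree higher — the $\tau^2+|\xi|^2$ terms, equivalently the extra $(\tau-|\xi|)(\tau+|\xi|)$ — which is exactly why their estimates in (3), (4), (7) are correspondingly weaker; all other coefficients and derivative orders are handled in the same way.

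The main obstacle is organizational rather than conceptual: the coefficient list is long, each coefficient must be differentiated up to order four in $\xi'$ and order two in $\xi_0$, and for every Leibniz splitting one must charge the derivatives of the $(\tau+|\xi|)^{-1}$-factors to the correct variable in order to land precisely on the mixed bounds in (2), (4), (6), (7); carrying this out uniformly is the bulk of the work. The single place where an honest inequality rather than pure algebra is needed is the non-cancellation estimate $|\tau+|\xi||\ge c(|\tau|+|\xi|)$, which rests on $\arg\tau\in(-\pi/4,\pi/4)$; with it in hand, everything else is the building-block calculus of Lemma \ref{l2.19} applied factor by factor.
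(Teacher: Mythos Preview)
Your proposal is correct and follows essentially the same route as the paper. Both arguments rest on the same elementary derivative rules $|\partial_{\xi_0}\tau|\le c/|\tau|$, $|\partial_{\xi'}^{k'}\tau|\le c|\tau|^{1-k'}$, $|\partial_{\xi'}^{k'}|\xi||\le c|\xi|^{1-k'}$ together with the non-cancellation bound $|\tau+|\xi||\ge c(|\tau|+|\xi|)$ (which the paper uses tacitly in every line of its computation but never states); the only difference is organisational, in that you package the bookkeeping as Leibniz-rule power counting over monomials, whereas the paper simply differentiates the representative coefficients $g_{11}$, $g_{13}$, $h_{11}$, $h_{13}$ by hand and declares the remaining cases analogous.
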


\begin{proof}
Consider $g_{11}$. Then
$$\eqal{
|g_{11,\xi_0}|&=\bigg|{\xi_1^2\over|\xi|(\tau+|\xi|^2)}{1\over\tau}\bigg|\le c{|\xi|\over(|\tau|+|\xi|)^2}{1\over|\tau|}\cr
&\le{c\over(|\tau|+|\xi|)|\tau|}\le{c\over|\tau|^2},\cr}
$$
$$\eqal{
|g_{11,\xi_0\xi_0}|&=\bigg|{\xi_1^2\over|\xi|(\tau+|\xi|)^3}{1\over\tau^2}+ {\xi_1^2\over|\xi|(\tau+|\xi|)^2}{1\over\tau^3}\bigg|\cr
&\le c{|\xi|\over(|\tau|+|\xi|)^3}{1\over|\tau|^2}+c{|\xi|\over(|\tau|+|\xi|)^2} {1\over|\tau|^3}\cr
&\le c{1\over(|\tau|+|\xi|)^2}{1\over|\tau|^2}+c{1\over(|\tau|+|\xi|)|\tau|^3}\le {c\over|\tau|^4}.\cr}
$$
The same estimates can be can proved for $g_{mr}$, $m=1,2,3$, $r=1,2$. Hence (1) holds
$$\eqal{
&|\partial_{\xi'}^1g_{11}|\le{|\xi_1|\over|\xi|(|\tau|+|\xi|)}\le{c\over|\tau|},\cr
&|\partial_{\xi'}^2g_{11}|\le{1\over|\xi|(|\tau|+|\xi|)}\le{c\over|\xi|\,|\tau|},\cr
&|\partial_{\xi'}^3g_{11}|\le{c\over|\xi|^2|\tau|}+{c\over|\xi|\,|\tau|^2},\cr
&|\partial_{\xi'}^4g_{11}|\le{c\over|\xi|^3|\tau|}+{c\over|\xi|^2|\tau|^2}+ {c\over|\xi|\,|\tau|^3}.\cr}
$$
The same estimates can be proved for $g_{mr}$, $m=1,2,3$, $r=1,2$. Hence (2) holds.

Consider $g_{13}$. Then
$$\eqal{
|\partial_{\xi_0}^1g_{13}|&\le c\bigg|{\xi_1\over|\xi|(\tau+|\xi|)}\bigg|+c \bigg|{\xi_1(|\xi|^2+\tau^2)\over|\xi|(\tau+|\xi|)^2}{1\over\tau}\bigg|\cr
&\le c{1\over|\tau|+|\xi|}+c{|\xi|^2+|\tau|^2\over(|\tau+|\xi|)^2}{1\over|\tau|}\le {c\over|\tau|},\cr
|\partial_{\xi_0}^2g_{13}|&\le{c\over|\tau|^3}.\cr}
$$
Next, we have
$$\eqal{
&|\partial_{\xi'}g_{13}|\le c+c{|\tau|\over|\xi|},\quad |\partial_{\xi'}^2g_{13}|\le{c\over|\tau|}+{c\over|\xi|}+{c|\tau|\over|\xi|^2},\cr
&|\partial_{\xi'}^3g_{13}|\le{c\over|\xi|(|\tau|+|\xi|)}+{c\over(|\tau|+|\xi|)^2},\cr
&|\partial_{\xi'}^4g_{13}|\le{c\over|\xi|^2(|\tau|+|\xi|)}+ {c\over|\xi|(|\tau|+|\xi|)^2}.\cr}
$$
The same estimates can be derived for $g_{m3}$, $m=2,3$. Hence (3) and (4) hold.

Consider $h_{11}$. Qualitatively, we have
$$\eqal{
&|h_{11,\xi_0}|\le{c\over|\tau|^3}+c{\xi_1^2\over|\tau|^3|\xi|(|\tau|+|\xi|)}+c {\xi_1^2\over|\tau|^2|\xi|(|\tau|+|\xi|)^2}\le c/|\tau|^3,\cr
&|h_{11,\xi_0\xi_0}|\le c/|\tau|^5.\cr}
$$
Next
$$\eqal{
|h_{11,\xi'}|&\le{c\over|\tau|^2}+c{|\xi_1|\over|\tau|\,|\xi|(|\tau|+|\xi|)}+ {c|\xi_1|^2\over|\tau|^2|\xi|(|\tau|+|\xi|)}\dots\le c/|\tau|^2,\cr
|\partial_{\xi'}^2h_{11}|&\le{c\over|\tau|^3}+{c\over|\tau\,|\xi|(|\tau|+|\xi|)}+ {c|\xi|\over|\tau|^2|\xi|(|\tau+|\xi|)}\dots\cr
&\le c/|\tau|^3+{c\over|\tau|^2|\xi|},\cr
|\partial_{\xi'}^3h_{11}|&\le c/|\tau|^4+{c\over|\tau|\xi^2(|\tau|+|\xi|)}+ {c\over|\tau|^2|\xi|(|\tau|+|\xi|)}+\cdots\cr
&\le c/|\tau|^4+{c\over|\tau|^2|\xi|^2}+{c\over|\tau|^3|\xi|}.\cr
|\partial_{\xi'}^4h_{11}|&\le c/|\tau|^5+{c\over|\tau|\,|\xi|^3(|\tau|+|\xi|)}+ {c\over|\tau|\,|\xi|(|\tau|+|\xi|)^3}\cr
&\quad+{c\over|\tau|\,|\xi|^2(|\tau|+|\xi|)^2}+\cdots\cr
&\le c/|\tau|^5+{c\over|\tau|^2|\xi|^3}+{c\over|\tau|^4|\xi|}+{c\over|\tau|^3|\xi|^2}.\cr}
$$
Similar estimates hold for $h_{mr}$, $m=1,2,3$, $r=1,2$. Hence (5) and (6) hold.

Finally, we examine $h_{13}$. Then we obtain
$$\eqal{
|h_{13,\xi_0}|&\le c/|\tau|^2,\quad |h_{13,\xi_0\xi_0}|\le c/|\tau|^4,\cr
|\partial_{\xi'}h_{13}|&\le{c\over|\tau|+|\xi|}+{c|\tau|\over|\xi|(|\tau|+|\xi|)} \le{c\over|\tau|}+{c\over|\xi|},\cr
|\partial_{\xi'}^2h_{13}|&\le{c\over|\xi|(|\tau|+|\xi|)}+ {|\xi|+|\tau|\over|\xi|^2(|\xi|+|\tau|)}+{|\xi|+|\tau|\over|\xi|(|\xi|+|\tau|)^2}\cr
&\le{c\over|\xi|^2}+{c\over|\tau|^2},\cr
|\partial_{\xi'}^3h_{13}|&\le{c\over|\xi|^2(|\tau|+|\xi|)}+ {c\over|\xi|(|\tau|+|\xi|)^2}+\cdots\cr
&\le{c\over|\xi|^2|\tau|}+{c\over|\xi|\,|\tau|^2},\cr
|\partial_{\xi'}^4h_{13}|&\le{c\over|\xi|^3(|\tau|+|\xi|)}+ {c\over|\xi|^2(|\tau|+|\xi|)^2}+{c\over|\xi|(|\tau|+|\xi|)^3}+\cdots\cr
&\le{c\over|\xi|^3|\tau|}+{c\over|\xi|^2|\tau|^2}+{c\over|\xi|\,|\tau|^3}.\cr}
$$
The same estimates hold for $h_{23}$. We do not need to estimate $h_{33}$ because $h_{33}=1$. Hence (7) and (8) are proved. Estimates for $g_{4r}$, $h_{4r}$, $r=1,2,3$, can be derived similarly. This concludes the proof of Lemma \ref{l4.1}.
\end{proof}

\begin{remark}\label{r4.2}
From (\ref{4.30})--(\ref{4.33}) the solution to (\ref{4.19}) is expressed in the form
\begin{equation}\eqal{
&F_2u_m\equiv\tilde u_m=\sum_{r=1}^3(g_{mr}e_1\tilde d_r+h_{mr}e_0\tilde d_r),\cr
&F_2q\equiv\tilde q=\sum_{r=1}^3(g_{4r}e_1\tilde d_r+h_{4r}e_0\tilde d_r),\cr}
\label{4.34}
\end{equation}
where $m=1,2,3$ and $g_{mr}$, $h_{mr}$, $g_{4r}$, $h_{4r}$, $r=1,2,3$ are defined by fromulas (\ref{4.30})--(\ref{4.33}).
\end{remark}

\begin{theorem}\label{t4.3}
Let $p,q\in(1,\infty)$, $\sigma\in(0,1)$, $d_k\in B_{p,q,\gamma}^{1+\sigma-1/p,1/2-\sigma/2-1/2p}(\R^2\times\R_+)$, $k=1,2$, $d_3\in B_{p,q,\gamma}^{2+\sigma-1/p,1+\sigma/2-1/2p}(\R^2\times\R_+)$.\\
Then there exists a solution to problem (\ref{4.17}) such that $u\in B_{p,q,\gamma}^{2+\sigma,1+\sigma/2}(\R_+^3\times\R_+)$, $\nabla q\in B_{p,q,\gamma}^{\sigma,\sigma/2}(\R_+^3\times\R_+)$ and
\begin{equation}
\|u\|_{B_{p,q,\gamma}^{2+\sigma,1+\sigma/2}(\R_+^3\times\R_+)}\le cI,
\label{4.35}
\end{equation}
\begin{equation}
\|\nabla q\|_{B_{p,q,\gamma}^{\sigma,\sigma/2}(\R_+^3\times\R_+)}\le cI,
\label{4.36}
\end{equation}
where
$$\eqal{
I&=\sum_{k=1}^2 \|d_k\|_{B_{p,q,\gamma}^{1+\sigma-1/p,(1+\sigma-1/p)/2}(\R^2\times\R_+)}\cr
&\quad+\|d_3\|_{B_{p,q,\gamma}^{2+\sigma-1/p,(2+\sigma-1/p)/2}(\R^2\times\R_+)}.\cr}
$$
\end{theorem}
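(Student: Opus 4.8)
\medskip\noindent\textbf{Plan of the proof.}
The solution is already available in closed form: by (\ref{4.30})--(\ref{4.33}), equivalently (\ref{4.34}), the Fourier--Laplace transforms of $u$ and $q$ are finite sums of the model terms $g_{mr}e_1\tilde d_r$ and $h_{mr}e_0\tilde d_r$ (with $g_{4r},h_{4r}$ for the pressure), where $e_0=e^{-\tau x_3}$, $e_1=(e^{-\tau x_3}-e^{-|\xi|x_3})/(\tau-|\xi|)$. Hence it is enough to estimate, for each $m,r\in\{1,2,3\}$ and each $j\in\{0,1,2\}$, the dyadic blocks $F_2^{-1}(\varphi_k\,g_{mr}\,\partial_{x_3}^j e_1\,\tilde d_r)$ and $F_2^{-1}(\varphi_k\,h_{mr}\,\partial_{x_3}^j e_0\,\tilde d_r)$, where $\{\varphi_k\}$ is the partition of unity in the tangential variables $\bar\xi'=(\xi',\xi_0)$; for $\nabla q$ one in addition multiplies by the tangential symbol $i\xi'$ or applies $\partial_{x_3}$. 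The whole argument runs along the lines of the proofs of Lemma \ref{l3.1} and Lemma \ref{l3.2}, the only new inputs being the multiplier bounds of Lemma \ref{l4.1} for $g_{mr},h_{mr}$ and the $x_3$-estimates of Lemma \ref{l2.19} for $e_0,e_1$. One uses repeatedly that on $\supp\varphi_k$ one has $|\tau|\sim2^k$ — because ${\rm Re}\,\tau^2=\gamma/\nu+|\xi'|^2\ge0$, $|\tau^2|\ge|\xi_0|/\nu$, $|\tau^2|\ge|\xi'|^2$ and $|\bar\xi'|_a\sim2^k$ (constants for small $k$ depending on $\gamma$, as in Lemma \ref{l3.2}) — whereas $|\xi|=|\xi'|$ ranges freely over $[0,c\,2^k]$.

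For fixed $x_3$, insert a fattened cut-off $\psi_k\equiv1$ on $\supp\varphi_k$ so that $\varphi_k\tilde d_r=\varphi_k\psi_k\tilde d_r$; applying the inverse Fourier--Laplace transform in $(\xi',\xi_0)$ only, the $k$-th block becomes the tangential convolution
$$
\big[F_2^{-1}\big(\varphi_k(g_{mr}\partial_{x_3}^j e_1+h_{mr}\partial_{x_3}^j e_0)\big)(\cdot,x_3,\cdot)\big]\ \star\ \big[F_2^{-1}(\psi_k\tilde d_r)(\cdot,\cdot)\big].
$$
By Young's inequality in $(x',t)$ its $L_p(\R^2\times\R)$ norm is at most the $L_1(\R^2\times\R)$ norm of the kernel (for that $x_3$) times $\|F_2^{-1}(\psi_k\tilde d_r)\|_{L_p(\R^2\times\R)}$, and the latter is comparable to the $k$-th Littlewood--Paley piece of $d_r$. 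The kernel $L_1$ norm is controlled as in Lemma \ref{l3.2}: multiplying and dividing by $(1+|\bar\xi'|_a)^N$ and using Cauchy--Schwarz and Parseval, it is dominated by a weighted $L_2$ norm in $(\xi',\xi_0)$ of a fixed number of tangential derivatives of $\varphi_k\,g_{mr}\,\partial_{x_3}^j e_1$ (resp.\ of $\varphi_k\,h_{mr}\,\partial_{x_3}^j e_0$); derivatives landing on $g_{mr},h_{mr}$ are bounded by Lemma \ref{l4.1}, those landing on $e_0,e_1$ by Lemma \ref{l2.19}. Raising to the $p$-th power and integrating in $x_3$, the estimates (\ref{2.13})--(\ref{2.16}) produce the factor $|\tau|^{-1}\sim2^{-k}$ (the $1/p$ of $x_3$-smoothing that accounts for the $-1/p$ in the trace orders of $d_r$) together with exactly the weight $2^{(\sigma+2-j)k}$ required in Definition \ref{d2.5}; the fractional $x_3$-difference term there is handled the same way from (\ref{2.14}), (\ref{2.16}) with $\varkappa=\sigma$. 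Since $\{\varphi_k\}$ lies in a class $A_{aL}$ with $L$ at our disposal (cf.\ Definition \ref{d2.16}), the mismatch between the $k$-th multiplier block and the $l$-th data block is gained as $2^{-\varepsilon|k-l|}$ with $\varepsilon$ arbitrarily large; summing in $l$ and then taking the $\ell^q$-norm in $k$ (Young's inequality for sequences) gives (\ref{4.35}), and the same computation with $g_{4r},h_{4r}$, the harmless factor $|\xi'|\le c\,2^k$ for the tangential part of $\nabla q$, and $\partial_{x_3}$ absorbed as above gives (\ref{4.36}). As the trace at $x_3=0$ of the $u$ read off from (\ref{4.34}) returns $d_r$, the pair $(u,q)$ solves (\ref{4.17}).

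The step I expect to be the main obstacle is the low-frequency regime $|\xi|\to0$: by Lemma \ref{l4.1} the symbols $g_{mr},h_{mr}$ carry negative powers of $|\xi'|$ (up to $|\xi'|^{-3}$), and one must check that the tangential inverse transforms of $\varphi_k\,g_{mr}\,\partial_{x_3}^j e_1$ etc.\ are still integrable. This works because such singularities are integrated against $d\xi'$ in $\R^2$ and because $e_1$ degenerates only mildly as $|\xi|\to|\tau|$, so the weighted $L_2$ bounds stay finite; when convenient one splits $\varphi_k$ into a part supported near $\xi'=0$ and a part away from it and treats the first by a direct estimate. A secondary bookkeeping point is to keep the Besov orders $1+\sigma-1/p$ (for $d_1,d_2$) and $2+\sigma-1/p$ (for $d_3$) attached to the correct $\tilde d_r$ in each product $g_{mr}e_1$, $h_{mr}e_0$ — the one-order gap between them being the expected difference between the Robin-type boundary condition $(\ref{4.17})_3$ and the Dirichlet-type condition $(\ref{4.17})_4$.
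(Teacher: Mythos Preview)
Your proposal is correct and follows essentially the same route as the paper: the paper likewise inserts an auxiliary family $\{\psi_l\}$ with $\psi_l\equiv1$ on $\supp\varphi_l$, writes each dyadic block as a tangential convolution, applies Minkowski/H\"older with the weight $(1+|\bar y|_a)^d$ and Parseval to reduce to a $W_2^{d,d/2}$-bound on the localized symbol, and then invokes Lemma~\ref{l4.1} and Lemma~\ref{l2.19} together with $\{\varphi_k\}\in A_{aL}$ to get the $2^{-c|l-k|}$ off-diagonal decay (this is packaged as Lemmas~\ref{l4.4}--\ref{l4.7}). Your flagged obstacle at $|\xi'|\to0$ is real and the paper handles it rather summarily by asserting $c_12^l\le|\xi|$ on the rescaled support; in fact the individual factors in the Leibniz expansion do carry $|\xi'|$-singularities, but the full symbols $g_{mr}e_1+h_{mr}e_0$ (coming from the explicit formulas (\ref{4.26})--(\ref{4.28})) are smooth at $\xi'=0$, so if you want to be more careful than the paper you should differentiate the product rather than each factor separately.
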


To prove estimates (\ref{4.35}) and (\ref{4.36}) we recall Definition \ref{d2.5} to describe the norms from the l.h.s. of (\ref{4.35}) and (\ref{4.36}). We restrict our considerations to prove estimate (\ref{4.35}) only.

Hence, we have
\begin{equation}\eqal{
&\|u\|_{B_{p,q,\gamma}^{2+\sigma,1+\sigma/2}(\R_+^3\times\R)}=\sum_{m=1}^3\|u_{m\gamma}\|_{B_{p,q}^{2+\sigma,1+\sigma/2}(\R_+^3\times\R)}=\sum_{m=1}^3\|u_{m\gamma}\|_{L_p(\R_+^3\times\R)}\cr
&\quad+\sum_{m=1}^3\bigg[\sum_{k=0}^\infty\bigg(\sum_{j\le2}\intop_{\R_+^3\times\R} |2^{(2+\sigma-j)k}(F_2^{-1}\varphi_kF_2\partial_{x_3}^ju_{m\gamma})(x,t)|^p dxdt\bigg)^{q/p}\bigg]^{1/q}\cr
&\quad+\sum_{m=1}^3\bigg[\sum_{k=0}^\infty\bigg(\intop_\R dt\intop_{\R_+}dx_3\intop_{\R_+}dz\intop_{\R^2}dx'\cdot\cr
&\quad\cdot{|F_2^{-1}\varphi_kF_2(\partial_{x_3}^2u_{m\gamma}(\bar x',x_3+z)-\partial_{x_3}^2u_{m\gamma}(\bar x',x_3)|^p\over z^{1+p\sigma}}\bigg)^{q/p}\bigg]^{1/q}\cr
&\equiv\sum_{m=1}^3(\|u_{m\gamma}\|_{L_p(\R_+^3\times\R)}+ \|u_m\|_{1,B_{p,q,\gamma}^{2+\sigma,1+\sigma/2}(\R_+^3\times\R)}+\|u_m\|_{2,B_{p,q,\gamma}^{2+\sigma,1+\sigma/2}(\R_+^3\times\R)}),\cr}
\label{4.37}
\end{equation}
where $\bar x'=(x_1,x_2,t)$.

\begin{lemma}\label{l4.4}
Let the assumptions of Theorem \ref{t4.3} be satisfied. Then
\begin{equation}\eqal{
&\|u\|_{1,B_{p,q,\gamma}^{2+\sigma,1+\sigma/2}(\R_+^3\times\R)}\cr
&=\sum_{m=1}^3\|u_m\|_{1,B_{p,q,\gamma}^{2+\sigma,1+\sigma/2}(\R_+^3\times\R)}\cr
&\le c\bigg(\sum_{k=1}^2 \|d_k\|_{B_{p,q,\gamma}^{1+\sigma-1/p,(1+\sigma-1/p)/2}(\R^2\times\R)}\cr
&\quad+ \|d_3\|_{B_{p,q,\gamma}^{2+\sigma-1/p,(2+\sigma-1/p)/2}(\R^2\times\R)}\bigg).\cr}
\label{4.38}
\end{equation}
\end{lemma}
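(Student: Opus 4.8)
The proof follows the scheme of Lemma~\ref{l3.1}; the only genuinely new feature is an integration in the normal variable $x_3$, which is handled by combining the decay estimates for $e_0,e_1$ of Lemma~\ref{l2.19} with the symbol bounds of Lemma~\ref{l4.1}. First, by Remark~\ref{r4.2} one has $F_2u_m=\sum_{r=1}^3(g_{mr}e_1\tilde d_r+h_{mr}e_0\tilde d_r)$, and since $\partial_{x_3}$ commutes with $F_2$, for $j\le2$ we get $F_2\partial_{x_3}^ju_m=\sum_{r=1}^3\big(g_{mr}(\partial_{x_3}^je_1)\tilde d_r+h_{mr}(\partial_{x_3}^je_0)\tilde d_r\big)$. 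Hence by linearity it suffices to bound, for each $m$, each $r$, each $j\le2$, and for $\mu=\mu(\xi',\xi_0,x_3)$ equal to $g_{mr}\partial_{x_3}^je_1$ or to $h_{mr}\partial_{x_3}^je_0$, the quantity
$$\Big[\sum_{k=0}^\infty 2^{(2+\sigma-j)kq}\big\|F_2^{-1}(\varphi_k\,\mu\,\tilde d_r)\big\|_{L_p(\R_+^3\times\R)}^q\Big]^{1/q},$$
where $\varphi_k=\varphi_k(\xi',\xi_0)$ acts only on the tangential–time frequencies.

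For fixed $x_3>0$ treat $\mu(\cdot,\cdot,x_3)$ as a Fourier multiplier in $(\xi',\xi_0)$. As in Lemma~\ref{l3.1}, introduce a fattened system $\{\psi_l\}$ with $\psi_l\equiv1$ on $\supp\varphi_l$, split $\tilde d_r=\sum_l\varphi_l\tilde d_r$, and write $F_2^{-1}(\varphi_k\mu\,\varphi_l\tilde d_r)$ as the convolution in $(x',t)$ of $F_2^{-1}(\varphi_k\psi_l\mu)(\cdot,x_3)$ with $F_2^{-1}(\varphi_l\tilde d_r)$. Applying the Minkowski inequality in $(x',t)$ and then the H\"older inequality in the convolution variable against the weight $(1+|\bar y'|_a)^{-d}$, with $d>3$ an even integer, exactly as in Lemma~\ref{l3.1}, separates the two factors: the data factor is $\|F_2^{-1}(\varphi_l\tilde d_r)\|_{L_p(\R^2\times\R)}$, while the multiplier factor is, by the Parseval identity, $\|\varphi_k\psi_l\mu(\cdot,x_3)\|_{W_2^{d,d/2}(\R^3)}$.

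Raising the multiplier factor to the power $p$ and integrating over $x_3\in(0,\infty)$ reduces the matter to bounding $\int_0^\infty\|\varphi_k\psi_l\mu(\cdot,x_3)\|_{W_2^{d,d/2}(\R^3)}^p\,dx_3$. One expands the $W_2^{d,d/2}$ norm, distributes the derivatives $\partial_{\xi'},\partial_{\xi_0}$ by the Leibniz rule among $\varphi_k\psi_l$, the symbols $g_{mr}$ or $h_{mr}$, and the propagators $e_0$ or $e_1$; uses the rescaling of $\varphi_k,\psi_l$ and $\{\varphi_k\}\in A_{aL}(\R^4)$ (as in Lemma~\ref{l3.2}) to produce the factor $2^{(d+5-L)|l-k|}$; uses Lemma~\ref{l4.1} for the derivatives of $g_{mr},h_{mr}$; and uses Lemma~\ref{l2.19} for $\int_0^\infty|\partial_{x_3}^j\partial_{\xi_0}^{k_0}\partial_{\xi'}^{k'}e_i|^p\,dx_3$. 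On $\supp\varphi_k$ one has $|\tau|\sim2^k$ (for $k=0$, $|\tau|\gtrsim\sqrt\gamma$, whence the constant then depends on $\gamma$), so all these quantities are explicit powers of $2^k$ and of $|\xi|$. The point to watch is the low-frequency corner $|\xi|\ll2^k$: for $r=1,2$ the symbols $g_{mr},h_{mr}$ and the propagator $e_1$ carry surplus negative powers of $|\xi|$, but the data $d_1,d_2$ enter at order $1+\sigma-1/p$, a full order below the natural trace order $2+\sigma-1/p$ of $u$, and this gap absorbs the surplus $|\xi|^{-1}$; for $r=3$ the combinations $h_{m3}e_0$ and $g_{m3}e_1$ are matched to the full trace order $2+\sigma-1/p$ of $d_3$. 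Keeping this accounting one obtains
$$\Big(\int_0^\infty\|\varphi_k\psi_l\mu(\cdot,x_3)\|_{W_2^{d,d/2}(\R^3)}^p\,dx_3\Big)^{1/p}\le c\,2^{-(2+\sigma-j)k}\,2^{\rho_r l}\,2^{(d+5-L)|l-k|},$$
where $\rho_1=\rho_2=1+\sigma-1/p$, $\rho_3=2+\sigma-1/p$, the anisotropic scaling automatically supplying the $\rho_r/2$ in the time direction.

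Inserting this bound, the prefactor $2^{(2+\sigma-j)k}$ cancels $2^{-(2+\sigma-j)k}$; summing $2^{(d+5-L)|l-k|}$ over $l$ with $L$ chosen large and using Young's inequality on $\ell^q(\N_0)$ leaves $\big[\sum_l\big(2^{\rho_r l}\|F_2^{-1}\varphi_l\tilde d_r\|_{L_p(\R^2\times\R)}\big)^q\big]^{1/q}=\|d_r\|_{B_{p,q,\gamma}^{\rho_r,\rho_r/2}(\R^2\times\R)}$ by Definition~\ref{d2.4}. Summing over $r=1,2,3$ and $m=1,2,3$ gives (\ref{4.38}). The hard part is the normal integration: one must simultaneously exploit the $x_3$-decay of $\partial_{x_3}^je_0,\partial_{x_3}^je_1$ from Lemma~\ref{l2.19} and the $|\xi|$-singular symbol bounds from Lemma~\ref{l4.1}, and check term by term that every surplus negative power of $|\xi|$ is compensated by the lower differential order carried by the corresponding datum, so that the $x_3$-integral stays finite and the whole expression remains integrable as $\xi'\to0$.
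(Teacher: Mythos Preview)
Your proposal follows essentially the same route as the paper: decompose via $\{\varphi_k\}$ and a fattened system $\{\psi_l\}$, rewrite as a convolution in the tangential--time variables, apply Minkowski and then H\"older against $(1+|\bar y|_a)^{-d}$, invoke Parseval, and reduce everything to a multiplier bound that combines Lemmas~\ref{l2.19} and~\ref{l4.1} with the off-diagonal decay from $\{\varphi_k\}\in A_{aL}$. The paper isolates that multiplier bound as a separate Lemma~\ref{l4.5}; your inline description has the same content.

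Two presentational points to tighten. First, after the rescaling $(2^l\cdot,2^{2l}\cdot)$ the localization is set by $\psi_l$, so the relevant scale is $|\tau|\sim2^l$ (see (\ref{4.44})), not $|\tau|\sim2^k$ as you write; your stated bound $c\,2^{-(2+\sigma-j)k}2^{\rho_r l}2^{(d+5-L)|l-k|}$ is nonetheless equivalent to the paper's (\ref{4.41}) once one writes $2^{(2+\sigma-j)k}=2^{(2+\sigma-j)(k-l)}2^{(2+\sigma-j)l}$ and absorbs the first factor into the off-diagonal term, so the outcome is unchanged. Second, your explanation of the low-$|\xi|$ corner conflates two mechanisms: the regularity gap between $d_1,d_2$ and $d_3$ is a $|\tau|$-scaling effect (it produces the constant $c_r$ in (\ref{4.41})), whereas the $|\xi|^{-1}$ factors appearing in Lemmas~\ref{l2.19} and~\ref{l4.1} must be controlled inside the multiplier estimate itself, before pairing with the data --- this is what the paper does in (\ref{4.45})--(\ref{4.46}) on the rescaled annulus $A$.
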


\begin{proof}
Using (\ref{4.34}) we have
$$\eqal{
&\|u_m\|_{1,B_{p,q,\gamma}^{2+\sigma,(1+\sigma)/2}(\R_+^3\times\R)}\cr
&=\bigg[\sum_{k=0}^\infty\bigg(\sum_{j\le 2}\intop_{\R_+^3\times\R} \bigg|2^{(2+\sigma-j)k}F_2^{-1}\varphi_k\sum_{r=1}^3(g_{mr}\partial_{x_3}^je_1\cr
&\quad+h_{mr}\partial_{x_3}^je_0)\tilde d_{r\gamma}\bigg|^pdxdt\bigg)^{q/p}\bigg]^{1/q}\cr
&\le c\bigg(\sum_{k=0}^\infty\sum_{j\le 2}\sum_{r=1}^3I_{1kjr\gamma}^q\bigg)^{1/q},\cr}
$$
where
$$\eqal{
I_{1kjr\gamma}&=\bigg(\intop_{\R_+^3\times\R}|2^{(2+\sigma-j)k}F_2^{-1}\varphi_k (g_{mr}\partial_{x_3}^je_1\cr
&\quad+h_{mr}\partial_{x_3}^je_0)F_2d_{r\gamma}|^pdxdt\bigg)^{1/p}.\cr}
$$
Introduce the family of functions $\{\psi_j(\bar\xi)\}$, $\bar\xi=(\xi',\xi_0)$, $\xi'=(\xi_1,\xi_2)$ such that $\supp\psi_0\subset\{\bar\xi:|\bar\xi|_a\le 4\}$, $\supp\psi_j\subset\{\bar\xi:2^{j-2}\le|\bar\xi|_a\le 2^{j+2}\}$ and $\psi_j(\bar\xi)=1$ for $\bar\xi\in\supp\varphi_j$. Then
$$\eqal{
I_{1kjr\gamma}&=\bigg(\intop_{\R_+^3\times\R}\bigg|\sum_{l=0}^\infty 2^{(2+\sigma-j)k}F_2^{-1}\psi_l(g_{mr}\partial_{x_3}^je_1\cr
&\quad+h_{mr}\partial_{x_3}^je_0)\varphi_k\varphi_lF_2d_{r\gamma}\bigg|^p dxdt\bigg)^{1/p}\cr
&=\bigg(\intop_{\R\times\R_+}\bigg|\sum_{l=0}^\infty 2^{(2+\sigma-j)k} (F_2^{-1}\psi_l(g_{mr}\partial_{x_3}^je_1+h_{mr}\partial_{x_3}^je_0\bigg)\cdot\cr
&\quad\cdot F_2F_2^{-1}\varphi_kF_2F_2^{-1}\varphi_lF_2d_{r\gamma}\bigg|^pd\bar xdx_3\bigg)^{1/p},\cr}
$$
where $\bar x=(x',t)=(x',x_0)$, $x'=(x_1,x_2)\in\R^2$.

Continuing, we can rewrite $I_{1kjr\gamma}$ as
$$\eqal{
I_{1kjr\gamma}&=\bigg(\intop_{\R^3\times\R_+}\bigg|\sum_{l=0}^\infty 2^{(2+\sigma-j)k}[F_2^{-1}\psi_l(g_{mr}\partial_{x_3}^je_1\cr
&\quad+h_{mr}\partial_{x_3}^je_0)F_2F_2^{-1}\varphi_k\star F_2^{-1}\varphi_lF_2d_{r\gamma}](\bar x,x_3)\bigg|^pd\bar xdx_3\bigg)^{1/p}\cr
&=\bigg(\intop_{\R^3\times\R_+}\bigg|\sum_{l=0}^\infty 2^{(2+\sigma-j)k}\intop_{\R^3}d\bar y[F_2^{-1}\psi_l(g_{mr}\partial_{x_3}^je_1+h_{mr}\partial_{x_3}^je_0)\cdot\cr
&\quad\cdot F_2(F_2^{-1}\varphi_k)](\bar y,x_3)(F_2^{-1}\varphi_lF_2d_{r\gamma})(\bar x-\bar y)]\bigg|^pd\bar xdx_3\bigg)^{1/p},\cr}
$$
where $\bar y=(y',y_0)$, $y'=(y_1,y_2)\in\R^2$.

Continuing, we have
$$\eqal{
I_{1kjr\gamma}&=\bigg(\intop_{\R^3\times\R_+}\bigg|\sum_{l=0}^\infty 2^{(2+\sigma-j)k}F_2^{-1}(F_2F_2^{-1}(g_{mr}\partial_{x_3}^je_1\cr
&\quad+h_{mr}\partial_{x_3}^je_0)F_2\intop_{\R^3}F_2^{-1}\varphi_k(\bar y,x_3)F_2^{-1}\varphi_lF_2d_{r\gamma}(\bar x-\bar y)d\bar y\bigg|^pd\bar xdx_3\bigg)^{1/p}\cr
&=\bigg(\intop_{\R^3\times\R_+}\bigg|\sum_{l=0}^\infty 2^{(2+\sigma-j)k}\intop_{\R^3}d\bar y[F_2^{-1}\psi_l(g_{mr}\partial_{x_3}^je_1+h_{mr}\partial_{x_3}^je_0)\star\cr
&\quad\star F_2^{-1}\varphi_k](\bar y,x_3)(F_2^{-1}\varphi_lF_2d_{r\gamma})(\bar x-\bar y)\bigg|^pd\bar xdx_3\bigg)^{1/p}.\cr}
$$
From Lemma \ref{l3.4} we have
\begin{equation}\eqal{
&[F_2^{-1}(f(2^l\cdot,2^{2l}\cdot))\star F_2^{-1}(g(2^l\cdot,2^{2l}\cdot))](\bar y)\cr
&=2^{-4l}(F_2^{-1}f\star F_2^{-1}g)(2^{-l}\tilde y),\cr}
\label{4.39}
\end{equation}
where $f=\psi_l(g_{mr}\partial_{x_3}^je_1+h_{mr}\partial_{x_3}^je_0)$, $g=\varphi_k$, $\tilde y=(y',2^{-l}y_0)$, $y'=(y_1,y_2)$.

Moreover, we used in (\ref{4.38}) the notation
$$
h(\xi',\xi_0)=\tilde h(\xi',\gamma+i\xi_0),\quad h\in\{f,g\}.
$$
Using the change of variables $y_i=2^{-l}\omega_i$, $i=1,2$, $y_0=2^{-2l}\omega_0$ and the notation $\tilde\omega=(\omega',2^{-l}\omega_0)$, $\omega'=(\omega_1,\omega_2)$, $\bar\omega=(\omega',\omega_0)$ we obtain
$$\eqal{
&I_{1kjr\gamma}=\bigg[\intop_{\R^3\times\R_+}\bigg|\sum_{l=0}^\infty 2^{(2+\sigma-j)k}2^{-4l}\intop_{\R^3}d\bar\omega[F_2^{-1}\psi_l(g_{mr}\partial_{x_3}^je_1\cr
&\quad+h_{mr}\partial_{x_3}^je_0)\star F_2^{-1}\varphi_k](2^{-l}\tilde\omega,x_3)(F_2^{-1}\varphi_lF_2d_{r\gamma})(\bar x-2^{-l}\tilde\omega)\bigg|^pd\bar xdx_3\bigg]^{1/p}.\cr}
$$
Then (\ref{4.39}) yields
$$\eqal{
I_{1kjr\gamma}&=\bigg\{\intop_{\R^3\times\R_+}d\bar xdx_3\bigg|\sum_{l=0}^\infty 2^{(2+\sigma-j)k}\intop_{\R^3}d\bar\omega\cdot\cr
&\quad\cdot[F_2^{-1}(\psi_l(g_{mr}\partial_{x_3}^je_1+h_{mr}\partial_{x_3}^je_0)) (2^l\cdot,2^{2l}\cdot)\star\cr
&\quad\star F_2^{-1}(\varphi_k(2^l\cdot,2^{2l}\cdot))] (\bar\omega,x_3)(F_2^{-1}\varphi_lF_2d_{r\gamma})(\bar x-\bar\omega)\bigg|^p\bigg\}^{1/p}\cr
&=\bigg\{\intop_{\R_+}dx_3\intop_{\R^3}d\bar x\bigg|\sum_{l=0}^\infty 2^{(2+\sigma-j)k}\intop_{\R^3}d\bar\omega[F_2^{-1}(\psi_l(g_{mr}\partial_{x_3}^je_1\cr
&\quad+h_{mr}\partial_{x_3}^je_0)(2^l\cdot,2^{2l}\cdot))\star F_2^{-1}(\varphi_k(2^l\cdot,2^{2l}\cdot))](\bar\omega,x_3)\cdot\cr
&\quad\cdot(F_2^{-1}\varphi_lF_2d_{r\gamma})(\bar x-\bar\omega)\bigg|^p\bigg\}^{1/p}.\cr}
$$
Next, the Minkowski inequality with respect to $\bar x$ gives
$$\eqal{
I_{1kjr\gamma}&\le\bigg[\intop_{\R_+}dx_3\bigg|\sum_{l=0}^\infty 2^{(2+\sigma-j)k}\intop_{\R^3}d\bar\omega[F_2^{-1}(\psi_l(g_{mr}\partial_{x_3}^je_1\cr
&\quad+h_{mr}\partial_{x_3}^je_0)(2^l\cdot,2^{2l}\cdot))\star F_2^{-1}(\varphi_k(2^l\cdot,2^{2l}\cdot))](\bar\omega,x_3)\cdot\cr
&\quad\cdot\bigg(\intop_{\R^3}d\bar x|(F_2^{-1}\varphi_lF_2d_{r\gamma})(\bar x-\bar\omega)|^p\bigg)^{1/p}\bigg|^p\bigg]^{1/p}\equiv I_{1kjr\gamma}^1.\cr}
$$
The change of variables $\bar z=\bar x-\bar\omega$ in the integral with respect to $\bar x$ implies
$$\eqal{
I_{1kjr\gamma}^1&=\bigg\{\intop_{\R_+}dx_3\bigg|\sum_{l=0}^\infty 2^{(2+\sigma-j)k}\intop_{\R^3}d\bar\omega[F_2^{-1}(\psi_l(g_{mr}\partial_{x_3}^je_1\cr
&\quad+h_{mr}\partial_{x_3}^je_0)(2^l\cdot,2^{2l}\cdot)\star F_2^{-1}(\varphi_k(2^l\cdot,2^{2l}\cdot))](\bar\omega,x_3)\cdot\cr
&\quad\cdot\bigg(\intop_{\R^3}d\bar z|(F_1^{-1}\varphi_lF_1d_{r\gamma})(\bar z)|^p\bigg)^{1/p}\bigg|^p\bigg\}^{1/p},\cr}
$$
where $\bar z=(z',z_0)$, $z'=(z_1,z_2)$ and we used the fact that $F_2d_{r\gamma}=F_1d_{r\gamma}$.

Next, applying the H\"older inequality in the integral with respect to $\bar\omega$ and replacing $\bar\omega$ by $\bar y$ and $\bar z$ by $\bar x$ we obtain
$$\eqal{
I_{1kjr\gamma}^1&\le\bigg\{\intop_{\R_+}dx_3\bigg|\sum_{l=0}^\infty 2^{(2+\sigma-j)k}\bigg(\intop_{\R^3}d\bar y{1\over(1+|\bar y|_a)^4}\bigg)^{1/2}\cdot\cr
&\quad\cdot\bigg(\intop_{\R^3}d\bar y|F_2^{-1}(\psi_l(g_{mr}\partial_{x_3}^je_1+ h_{mr}\partial_{x_3}^je_0)(2^l\cdot,2^{2l}\cdot)\star\cr
&\quad\star F_2^{-1}(\varphi_k(2^l\cdot,2^{2l}\cdot))](\bar y,x_3)(1|+|\bar y|_a)^2|^2\bigg)^{1/2}\bigg|^p\bigg\}^{1/p}\cdot\cr
&\quad\cdot\bigg(\intop_{\R^3}d\bar x|(F_2^{-1}\varphi_lF_2d_{r\gamma})(\bar x)|^p\bigg)^{1/p}\equiv I_{1kjr\gamma}^2.\cr}
$$
Using that
$$
\bigg(\intop_{\R^3}d\bar y{1\over(1+|\bar y|_a)^4}\bigg)^{1/2}\le c
$$
and the Parseval identity we have
$$\eqal{
I_{1kjr\gamma}^2&\le c\bigg\{\intop_{\R_+}dx_3\bigg|\sum_{l=0}^\infty 2^{(2+\sigma-j)(k-l)}2^{(2+\sigma-j)l}\cdot\cr
&\quad\cdot\|\psi_l(g_{mr}\partial_{x_3}^je_1+h_{mr}\partial_{x_3}^je_0) (2^l\cdot,x_3,2^{2l}\cdot)\varphi_k (2^l\cdot,2^{2l}\cdot)\|_{W_2^{4,2}(\R^3)}\bigg|^p\bigg\}^{1/p}\cdot\cr
&\quad\cdot\bigg(\intop_{\R^3}d\bar x|F_1^{-1}\varphi_lF_1d_{r\gamma}(\bar x)|^p\bigg)^{1/p}\cr
&\le c\sum_{l=0}^\infty 2^{(2+\sigma-j)(k-l)} 2^{(2+\sigma-j)l}\bigg(\intop_{\R_+}dx_3\|\psi_l(g_{mr}\partial_{x_3}^je_1\cr
&\quad+h_{mr}\partial_{x_3}^je_0)(2^l\cdot,x_3,2^{2l}\cdot)\varphi_k (2^l\cdot,2^{2l}\cdot)\|_{W_2^{4,2}(\R^3)}^p\bigg)^{1/p}\cdot\cr
&\quad\cdot\bigg(\intop_{\R^3}d\bar x|(F_1^{-1}\varphi_kF_1d_{r\gamma})(\bar x)|^p\bigg)^{1/p}\equiv I_{1kjr\gamma}^3.\cr}
$$
Using (\ref{4.41}) (see Lemma \ref{l4.5} below)we get
\begin{equation}\eqal{
I_{1kjr\gamma}^3&\le c\sum_{l=0}^\infty 2^{(\delta-j-L)|l-k|}2^{(\delta-1/p-c_r)l}\cdot\cr
&\quad\cdot\|F_1^{-1}\varphi_lF_1d_{r\gamma}\|_{L_p(\R^3)},\cr}
\label{4.40}
\end{equation}
where $\delta=2+\sigma+8$, $c_r=1$ for $r=1,2$, $c_r=0$ for $r=3$.

Therefore, by the H\"older inequality
$$\eqal{
&\|u_m\|_{1,B_{p,q,\gamma}^{2+\sigma,1+\sigma/2}(\R_+^3\times\R)}\le c\bigg\{\sum_{k=0}^\infty\sum_{r=1}^3\bigg|\sum_{l=0}^\infty\cdot\cr
&\quad\cdot 2^{(\delta-L)|l-k|}2^{(2+\sigma-1/p-c_r)l}\|F_1^{-1}\varphi_lF_1 d_{r\gamma}\|_{L_p(\R^3)}\bigg|^q\bigg\}^{1/q}\cr
&\le c\bigg(\sum_{k=0}^\infty\sum_{r=1}^3\sum_{l=0}^\infty 2^{(\delta+\varepsilon-L)|l-k|q}2^{(2+\sigma-1/p-c_r)lq}\cdot\cr
&\quad\cdot\|F_1^{-1}\varphi_lF_1d_{r\gamma}\|_{L_p(\R^3)}^q\bigg)^{1/q},\cr}
$$
where $\varepsilon>0$ is arbitrary small. Assuming that $L>\delta+\varepsilon$ we obtain (\ref{4.37}). This ends the proof.
\end{proof}

To prove (\ref{4.40}) we need

\begin{lemma}\label{l4.5}
We have
\begin{equation}\eqal{
J_{mr}&=\bigg(\intop_{\R_+}dx_3\|[\psi_1(g_{mr}\partial_{x_3}^je_1+ h_{mr}\partial_{x_3}^je_0)](2^l\cdot,2^{2l}\cdot)\cdot\cr
&\quad\cdot\varphi_k(2^l\cdot,2^{2l}\cdot)\|_{W_2^{4,2}(\R^2\times\R)}^p\bigg)^{1/p}\le c2^{l(j-1/p-c_r)}2^{(\gamma-L)|l-k|},\cr}
\label{4.41}
\end{equation}
where $c$ depends on $\gamma$ for $l=0$; $c_r=1$ for $r=1,2$; $c_r=0$ for $r=3$; $m=1,2,3$; $L>0$ can be chosen sufficiently large.
\end{lemma}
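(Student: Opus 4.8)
The plan is to imitate, almost verbatim, the proof of Lemma \ref{l3.2}, the only new features being the extra variable $x_3$ and the need to keep the two frequency scales $|\tau|$ and $|\xi|$ apart. First I would write the norm $\|\cdot\|_{W_2^{4,2}(\R^2\times\R)}$ out in terms of the derivatives $\partial_{\xi'}^a\partial_{\xi_0}^b$ with $a+2b\le 4$ and apply the Leibniz rule, distributing each such derivative among the three factors $\psi_l(2^l\cdot,2^{2l}\cdot)$, $(g_{mr}\partial_{x_3}^je_1+h_{mr}\partial_{x_3}^je_0)(2^l\cdot,2^{2l}\cdot)$ and $\varphi_k(2^l\cdot,2^{2l}\cdot)$. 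This reduces $J_{mr}$ to a finite sum of terms in which a fixed number of $\xi$-derivatives falls on each factor.

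Next, exactly as in Lemma \ref{l3.2}, I would use that $\psi_l(2^l\cdot,2^{2l}\cdot)$ is supported in the $l$-independent anisotropic annulus $\{1/4\le|\bar\xi|_a\le 4\}$ (in the ball $\{|\bar\xi|_a\le 4\}$ when $l=0$) and that all its rescaled derivatives are $O(1)$ there. On this set, since $\arg\tau\in(-\pi/4,\pi/4)$ and $\mathrm{Re}\,\tau^2\ge c\gamma$, one gets the two–sided bound $|\tau|\sim 2^{-l}$ for $l\ne 0$ (and $|\tau|\ge c(\gamma)2^{-l}$ for $l=0$), while the original frequency $|\xi|=2^{-l}|\xi'_{\rm orig}|$ ranges over $[0,c\,2^{-l}]$; this both localises the $\xi$–integration and fixes the size of every $|\tau|^{-c}$ appearing below.

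Then I would insert, on that support, the bounds of Lemma \ref{l4.1} for the $\xi$–derivatives of $g_{mr},h_{mr}$ and the bounds (\ref{2.13})--(\ref{2.16}) of Lemma \ref{l2.19} for the joint $\xi$– and $x_3$–derivatives of $e_0,e_1$ together with their $x_3$–integrals, carrying out the $x_3$–integration first and only afterwards the $L_2$–integration in $(\xi',\xi_0)$. Every factor $|\tau|^{-c}$ then contributes $2^{cl}$ and every factor $|\xi|^{-c}$ a power $|\xi'_{\rm orig}|^{-c}$ which, after the rescaling, is locally integrable over $\R^2$; collecting the exponents of $2$ over all Leibniz terms produces the factor $2^{l(j-1/p-c_r)}$, with $c_r=1$ for $r=1,2$ and $c_r=0$ for $r=3$ — the discrepancy reflecting the extra factor of $|\xi|$ carried by $g_{m1},g_{m2},h_{m1},h_{m2}$ (e.g. $g_{11}=O(|\xi|)$ as $\xi\to 0$), which $g_{m3}$ lacks but for which the extra order of regularity of $d_3$ compensates. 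The remaining factor $\varphi_k(2^l\cdot,2^{2l}\cdot)$, rewritten as $\varphi_k(2^k\!\cdot\,2^{l-k}\cdot,\,2^{2k}\!\cdot\,2^{2(l-k)}\cdot)$ and estimated by means of $\{\varphi_k\}\in A_{aL}$ precisely as at the end of the proof of Lemma \ref{l3.2}, yields a factor $c\,2^{(c_0-L)|l-k|}$ with $c_0$ a fixed exponent, so that taking $L$ large enough absorbs the polynomial loss and gives (\ref{4.41}).

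The main obstacle is the low–frequency region $\xi\to 0$. There $g_{mr},h_{mr}$ and, above all, $e_1$ are singular, and a crude count of derivatives — all four $\xi'$–derivatives landing on a $|\xi|^{-1}$–type factor of $g_{mr}$, or on the slowly decaying part $e^{-|\xi|x_3}$ of $e_1$ — would generate a negative power of $|\xi|$ which is not square–integrable over $\R^2$. The way around this is to keep the products in (\ref{4.30})--(\ref{4.33}) in non–expanded form wherever needed: for $r=1,2$ the numerators $\xi_1^2,\xi_1\xi_2,\dots$ already built into $g_{mr},h_{mr}$ cancel enough of the singularity (equivalently, only finitely many derivatives may fall on these factors before the term decays fast enough), and for $r=3$ the same role is played by the single extra derivative of regularity of $d_3$; in each surviving term the residual negative power of $|\xi|$ stays strictly below $1$, so the $\xi$–integral converges, the borderline case $l=0$ producing exactly the stated dependence of the constant on $\gamma$. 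Once this bookkeeping is organised, the rest is the routine scaling and summation already performed in Lemmas \ref{l3.2} and \ref{l3.4}.
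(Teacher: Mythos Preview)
Your overall plan — expand the $W_2^{4,2}$ norm by Leibniz, use the support of $\psi_l(2^l\cdot,2^{2l}\cdot)$ to localise to the fixed annulus $A$, insert the bounds of Lemmas \ref{l2.19} and \ref{l4.1}, and finish with the $A_{aL}$ decay of $\varphi_k$ exactly as in Lemma \ref{l3.2} — is precisely the paper's route. But your scaling is inverted. After the substitution $(\xi',\xi_0)\mapsto(2^l\xi',2^{2l}\xi_0)$ one has $\tau^2=\gamma+2^{2l}|\xi'|^2+i\,2^{2l}\xi_0$, so on $A$ (where $|\xi'|^2+|\xi_0|\sim1$) one gets $|\tau|\sim 2^{l}$, \emph{not} $2^{-l}$; likewise the frequency that enters $g_{mr},h_{mr},e_0,e_1$ is $|\xi|=2^{l}|\xi'|\in[0,c\,2^{l}]$, not $[0,c\,2^{-l}]$. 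With the correct direction each factor $|\tau|^{-1}$ contributes $2^{-l}$, and the exponent in (\ref{4.41}) is produced by pairing $\|\partial_{x_3}^j e_0\|_{L_p}\le c|\tau|^{j-1/p}\le c\,2^{l(j-1/p)}$ with $|h_{mr}|\le c|\tau|^{-1}\le c\,2^{-l}$ (for $r=1,2$) or $|h_{m3}|\le c$, and $\|\partial_{x_3}^j e_1\|_{L_p}\le c\,2^{l(j-1/p-1)}$ with $|g_{mr}|\le c$ (for $r=1,2$) or $|g_{m3}|\le c|\tau|\le c\,2^l$; this is exactly the content of (\ref{4.44})--(\ref{4.46}). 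Thus $c_r$ records the \emph{size} of $g_{mr},h_{mr}$ at scale $|\tau|\sim2^l$, not their vanishing order at $\xi=0$; your explanation that ``$g_{11}=O(|\xi|)$ as $\xi\to0$'' is what produces $c_1=1$ is a second, independent error.

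Your observation that the annulus $A$ does not exclude $\xi'=0$ is, on the other hand, well taken: the paper's line ``$c_12^l\le|\xi|$ on $A$'' is not literally true, and the pointwise bounds (4.45) do fail at $\xi'=0$. What is actually needed is only that the inverse powers of $|\xi'|$ coming from Lemmas \ref{l2.19} and \ref{l4.1} remain $L_2$-integrable over the two-dimensional annulus once one keeps the full products $g_{mr}\partial_{x_3}^je_1$ and $h_{mr}\partial_{x_3}^je_0$ together (equivalently, goes back to the representation (\ref{4.20}) with (\ref{4.24})--(\ref{4.25})). Your instinct to avoid separating these factors is right, but the claim that ``the residual negative power of $|\xi|$ stays strictly below $1$'' has to be checked term by term, not asserted; as written, neither the paper nor your proposal carries this out.
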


\begin{proof}
Introduce the notation $\partial_\xi^{\bar s_i}=\partial_{\xi'}^{s'_i}\partial_{\xi_0}^{s_i}$, where $\bar s_i=|s'_i|+2s_i$, $s'_i=(s_{1i},s_{2i})$. Then $J_{mr}$ can be written as
$$\eqal{
J_{mr}&=\bigg(\intop_{\R_+}dx_3\sum_{\sum_{i=1}^4(s'_i+2s_i)\le 4} \|(\partial_\xi^{\bar s_1}\psi_l)(2^l\cdot,2^{2l}\cdot)\cr
&\quad(\partial_\xi^{\bar s_2}g_{mr}\partial_\xi^{\bar s_3}\partial_{x_3}^je_1+ \partial_\xi^{\bar s_2}h_{mr}\partial_\xi^{\bar s_3}\partial_{x_3}^je_0) (2^l\cdot,x_3,2^{2l}\cdot)\cdot\cr
&\quad\cdot(\partial_\xi^{\bar s_4}\varphi_k)(2^l\cdot,2^{2l}\cdot)\|_{L_2(\R^3)}^p\bigg)^{1/p},\cr}
$$
where
$$
\partial_{\xi'}^\sigma=\partial_{\xi_1}^{\sigma_1}\partial_{\xi_2}^{\sigma_2},\quad |\sigma|=\sigma_1+\sigma_2.
$$
Applying the Minkowski inequality we get
$$\eqal{
J_{mr}&\le c\sum_{\sum_{i=1}^4(s'_i+2s_i)\le 4}\|(\partial_\xi^{\bar s_1}\psi_l) (2^l\cdot,2^{2l}\cdot)\cdot\cr
&\quad\cdot(\partial_\xi^{\bar s_2}g_{mr}(2^l\cdot,2^{2l}\cdot)\|\partial_\xi^{\bar s_3}\partial_{x_3}^je_1\|_{L_p(\R_+)}\cdot\cr
&\quad\cdot(\partial_\xi^{\bar s_4}\varphi_k)(2^l\cdot,2^{2l}\cdot)\|_{L_2(\R^3)}\cr
&\quad+\|(\partial_\xi^{\bar s_1}\psi_l)(2^l\cdot,2^{2l}\cdot)\partial_\xi^{\bar s_2}h_{mr}(2^l\cdot,2^{2l}\cdot)\cdot\cr
&\quad\cdot\|\partial_\xi^{\bar s_3}\partial_{x_3}^je_0\|_{L_p(\R_+)}(2^l\cdot,2^{2l}\cdot)(\partial_\xi^{\bar s_4}\varphi_k)(2^l\cdot,2^{2l}\cdot)\|_{L_2(\R^3)}\cr
&\equiv J_{mr}^1.\cr}
$$
From the properties of $\psi_l$ it follows that
$$\eqal{
&\supp\psi_l(2^l\cdot,2^{2l}\cdot)\subset\{\bar\xi\colon|\bar\xi|_a\le 4\}\equiv A\quad &{\rm for}\ \ l=0,\cr
&\supp\psi_l(2^l\cdot,2^{2l}\cdot)\subset\{\bar\xi\colon 1/4\le|\bar\xi|_a\le 4\}\equiv A\quad &{\rm for}\ \ l\not=0\cr}
$$
and
\begin{equation}
|\partial_\xi^{\bar s_1}\psi_l(2^l\cdot,2^{2l}\cdot)|\le c\quad {\rm for}\ \ \bar\xi\in A.
\label{4.42}
\end{equation}
Therefore, we obtain
\begin{equation}\eqal{
J_{mr}^1&\le c\sum_{\sum_{i=1}^4(s'_i+2s_i)\le 4}(\|\partial_\xi^{\bar s_2}g_{mr}(2^l\cdot,2^{2l}\cdot)\cdot\cr
&\quad\cdot\|\partial_\xi^{\bar s_3}\partial_{x_3}^je_1\|_{L_p(\R_+)}(2^l\cdot,2^{2l}\cdot)(\partial_\xi^{\bar s_4}\varphi_k)(2^l\cdot,2^{2l}\cdot)\|_{L_2(A)}\cr
&\quad+\|\partial_\xi^{\bar s_2}h_{mr}(2^l\cdot,2^{2l}\cdot)\|\partial_\xi^{\bar s_3}\partial_{x_3}^je_0\|_{L_p(\R_+)}(2^l\cdot,2^{2l}\cdot)\cdot\cr
&\quad\cdot(\partial_\xi^{\bar s_4}\varphi_k)(2^l\cdot,2^{2l}\cdot)\|_{L_2(A)}.\cr}
\label{4.43}
\end{equation}
Recall that $\tau^2=\gamma+2^{2l}\xi'^2+2^{2l}\xi_0i$, $\gamma\le 2^{2l}$. Then
\begin{equation}
c_12^l\le|\tau|\le c_22^l
\label{4.44}
\end{equation}
and from Lemma \ref{l4.1} we have
$$
c_12^l\le|\xi|\le c_22^l
$$
for $\bar\xi\in A$.

Then
\begin{equation}\eqal{
&|\partial_\xi^{\bar s_2}g_{mr}(2^l\cdot,2^{2l}\cdot)|\le c,\quad r=1,2,\cr
&|\partial_\xi^{\bar s_2}g_{m3}(2^l\cdot,2^{2l}\cdot)|\le c2^l,\cr
&|\partial_\xi^{\bar s_2}h_{mr}(2^l\cdot,2^{2l}\cdot)|\le c2^{-l},\quad r=1,2,\cr
&|\partial_\xi^{\bar s_2}h_{m3}(2^l\cdot,2^{2l}\cdot)|\le c.\cr}
\label{4.45}
\end{equation}
From (\ref{2.13}) and (\ref{2.15}) of Lemma \ref{l2.19} we have
\begin{equation}\eqal{
&\|\partial_{x_3}^j\partial_\xi^{\bar s_3}e_1\|_{L_p(\R_+)}(2^l\cdot,2^{2l}\cdot)\le c2^{l(j-1/p-1)},\cr
&\|\partial_{x_3}^j\partial_\xi^{\bar s_3}e_0\|_{L_p(\R_+)}(2^l\cdot,2^{2l}\cdot)\le c2^{l(j-1/p)}.\cr}
\label{4.46}
\end{equation}
Since $\{\varphi_k\}\in{\cal A}_{aL}(\R^4)$ (see Definition \ref{d2.16}) we have
\begin{equation}\eqal{
&\|\partial_\xi^{\bar s_4}\varphi_k(2^l\cdot,2^{2l}\cdot)\|_{L_2(A)}\cr
&=\|\partial_\xi^{\bar s_4}\varphi_k(2^k2^{(l-k)},2^{2k}2^{2(l-k)}\cdot)\|_{L_2(A)}\cr
&\le c2^{6|l-k|}\|\partial_{y'}^{s'_4}\partial_{y_0}^{s_4}\varphi_k (2^k\cdot,2^{2k}\cdot)\|_{L_2(B)}\cr
&\le c2^{(8-L)|l-k|},\cr}
\label{4.47}
\end{equation}
where $L$ is chosen sufficiently large,
$$\eqal{
&B=\{\bar y\colon 2^{l-k-2}\le|\bar y|_a\le 2^{l-k+2}\}\quad &{\rm for}\ \ l\not=0,\cr
&B=\{\bar y\colon|\bar y|_a\le 2^{l-k+2}\}\quad &{\rm for}\ \ l=0\cr}
$$
and we have used the change of variables
$$
y_0=2^{2(l-k)\xi_0},\quad y'=2^{(l-k)\xi'},\quad \xi'=(\xi_1,\xi_2),\quad y'=(y_1,y_2).
$$
In view of the above estimates (\ref{4.41}) holds. This ends the proof of Lemma \ref{l4.5}.
\end{proof}

Now, we shall derive the estimate for
$$\eqal{
&\|u\|_{2,B_{p,q,\gamma}^{2+\sigma,1+\sigma/2}(\R_+^3\times\R)}=\bigg[ \sum_{k=0}^\infty\bigg(\intop_{\R_+}\intop_{\R_+^3\times\R}\cdot\cr
&\quad\cdot{|F_2^{-1}\varphi_kF_2U_\gamma)(\bar x,x_3,z)|^p\over z^{1+p\sigma}} d\bar xdx_3dz\bigg)^{q/p}\bigg]^{1/q},\cr}
$$
where $U=\partial_{x_3}^2u_\gamma(\bar x,x_3+z)-\partial_{x_3}^2u(\bar x,x_3)$.

\begin{lemma}\label{l4.6}
Let the assumptions of Theorem \ref{t4.3} be satisfied. Then
\begin{equation}\eqal{
&\|u\|_{2,B_{p,q,\gamma}^{2+\sigma,1+\sigma/2}(\R_+^3\times\R)}\cr
&\le c\bigg(\sum_{k=1}^2 \|d_k\|_{B_{p,q,\gamma}^{1+\sigma-1/p,(1+\sigma-1/p)/2}(\R^2\times\R_+)}\cr
&\quad+\|d_3\|_{B_{p,q,\gamma}^{2+\sigma-1/p,(2+\sigma-1/p)/2}(\R^2\times\R)}.\cr}
\label{4.48}
\end{equation}
\end{lemma}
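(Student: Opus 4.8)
The plan is to mirror, almost line by line, the proof of Lemma \ref{l4.4}, the only structural change being that the $L_p(\R_+)$-norms in $x_3$ of the functions $\partial_{x_3}^j\partial_\xi^{\bar s}e_0$, $\partial_{x_3}^j\partial_\xi^{\bar s}e_1$ are replaced by their first $x_3$-differences in the fractional seminorm $\big(\intop_{\R_+}\intop_{\R_+}z^{-1-p\sigma}|f(x_3+z)-f(x_3)|^p\,dx_3\,dz\big)^{1/p}$, which is exactly the quantity estimated in (\ref{2.14}) and (\ref{2.16}) of Lemma \ref{l2.19}. First I would use the representation (\ref{4.34}), i.e.
$$
\partial_{x_3}^2\tilde u_{m\gamma}(\bar x',x_3)=\sum_{r=1}^3\big(g_{mr}\,\partial_{x_3}^2e_1+h_{mr}\,\partial_{x_3}^2e_0\big)\tilde d_{r\gamma},
$$
so that the quantity $F_2^{-1}\varphi_kF_2\big(\partial_{x_3}^2u_{m\gamma}(\bar x',x_3+z)-\partial_{x_3}^2u_{m\gamma}(\bar x',x_3)\big)$ entering $\|u\|_{2,B_{p,q,\gamma}^{2+\sigma,1+\sigma/2}}$ is built from the multipliers $g_{mr}\big(\partial_{x_3}^2e_1(x_3+z)-\partial_{x_3}^2e_1(x_3)\big)$ and $h_{mr}\big(\partial_{x_3}^2e_0(x_3+z)-\partial_{x_3}^2e_0(x_3)\big)$ times $\tilde d_{r\gamma}$. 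Inserting the auxiliary partition of unity $\{\psi_l\}$, applying the convolution--scaling identity of Lemma \ref{l3.4}, then the Minkowski inequality in $\bar x$, the H\"older inequality in the convolution variable (which absorbs the weight $(1+|\bar y|_a)^{-4}$), and the Parseval identity --- exactly the chain $I_{1kjr\gamma}\to I_{1kjr\gamma}^3$ of Lemma \ref{l4.4} --- reduces the assertion to the difference-analogue of Lemma \ref{l4.5}.

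That analogue states that
$$\eqal{
&\Big(\intop_{\R_+}\!dx_3\intop_{\R_+}\!\frac{dz}{z^{1+p\sigma}}\big\|[\psi_l(g_{mr}\Delta_z\partial_{x_3}^2e_1+h_{mr}\Delta_z\partial_{x_3}^2e_0)](2^l\cdot,2^{2l}\cdot)\,\varphi_k(2^l\cdot,2^{2l}\cdot)\big\|_{W_2^{4,2}(\R^2\times\R)}^p\Big)^{1/p}\cr
&\qquad\le c\,2^{l(2+\sigma-1/p-c_r)}\,2^{(\delta-L)|l-k|},\cr}
$$
where $\Delta_zf(x_3)=f(x_3+z)-f(x_3)$, $c_r=1$ for $r=1,2$, $c_r=0$ for $r=3$, $\delta$ is a fixed constant and $L$ is at our disposal. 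To prove it I would expand the $W_2^{4,2}$-norm by the Leibniz rule as in Lemma \ref{l4.5}: on the support $A$ of $\psi_l(2^l\cdot,2^{2l}\cdot)$ one has $|\tau|\sim|\xi|\sim2^l$ by (\ref{4.44}) and Lemma \ref{l4.1}; the $\psi_l$-derivatives are $O(1)$; the $g_{mr}$- and $h_{mr}$-derivatives at $(2^l\cdot,2^{2l}\cdot)$ obey (\ref{4.45}), which in particular supplies the extra $2^{-l}$ attached to $h_{mr}$ for $r=1,2$ and the extra $2^{l}$ attached to $g_{m3}$; and the $\varphi_k$-derivatives obey (\ref{4.47}), giving $2^{(8-L)|l-k|}$. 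The one new ingredient is that, in place of (\ref{4.46}), one invokes (\ref{2.14}) and (\ref{2.16}) of Lemma \ref{l2.19} with $j=2$ and $\varkappa=\sigma$: after the rescaling $|\tau|\sim|\xi|\sim2^l$ these bound the weighted $x_3$-differences of $\partial_{x_3}^2\partial_\xi^{\bar s_3}e_0$ and $\partial_{x_3}^2\partial_\xi^{\bar s_3}e_1$ by $c\,2^{l(2+\sigma-\bar s_3-1/p)}$ and $c\,2^{l(2+\sigma-\bar s_3-1/p-1)}$ respectively. Matching the powers of $2^l$ --- the $2^{-l}$ from $h_{mr}$ (resp. the $2^{l}$ from $g_{m3}$) exactly compensating the gap between the $e_0$- and $e_1$-bounds --- collapses everything to the exponent $2+\sigma-1/p-c_r$, while all the $\bar s_i$-dependence is absorbed into a harmless $2^{\delta|l-k|}$, as in Lemma \ref{l4.5}.

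Finally I would substitute this estimate into the reduced sum and finish exactly as at the end of the proof of Lemma \ref{l4.4}: the prefactors cancel so that the $2^l$-exponent is $2+\sigma-1/p-c_r$, and choosing $L>\delta+\varepsilon$ for some $\varepsilon>0$ and applying Young's inequality for the convolution of $\{2^{(\delta-L)|j|}\}_j$ with $\{2^{(2+\sigma-1/p-c_r)l}\|F_1^{-1}\varphi_lF_1d_{r\gamma}\|_{L_p(\R^3)}\}_l$ leaves
$$
\|u\|_{2,B_{p,q,\gamma}^{2+\sigma,1+\sigma/2}(\R_+^3\times\R)}\le c\sum_{r=1}^3\Big(\sum_{l=0}^\infty 2^{(2+\sigma-1/p-c_r)lq}\|F_1^{-1}\varphi_lF_1d_{r\gamma}\|_{L_p(\R^3)}^q\Big)^{1/q},
$$
which is exactly the right-hand side of (\ref{4.48}) by Definition \ref{d2.4}, since $c_r=1$ for $r=1,2$ and $c_r=0$ for $r=3$. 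I expect the only real difficulty to be the second step: the careful bookkeeping, in the Leibniz expansion of the $W_2^{4,2}$-norm, of how the $\xi'$- and $\xi_0$-derivatives interact with the $2^l$-rescaling and with the apparent $\xi=0$ singularities of $g_{mr}$, $h_{mr}$ (harmless only because $|\xi|\sim2^l$ on $A$), together with verifying that the fractional-in-$x_3$ behaviour is precisely the one packaged in Lemma \ref{l2.19}; everything else is a transcription of Lemmas \ref{l4.4} and \ref{l4.5}.
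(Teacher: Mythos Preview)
Your proposal is correct and follows essentially the same approach as the paper's own proof. The ``difference-analogue of Lemma \ref{l4.5}'' you describe is precisely the paper's Lemma \ref{l4.7}, and the chain of manipulations (insert $\{\psi_l\}$, apply Lemma \ref{l3.4}, Minkowski, H\"older with the weight $(1+|\bar y|_a)^{-4}$, Parseval, then invoke (\ref{2.14}) and (\ref{2.16}) in place of (\ref{2.13}) and (\ref{2.15})) matches the paper's $I_{2kmr\gamma}\to I_{2kmr\gamma}^3$ sequence exactly.
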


\begin{proof}
Since
$$
F_2U_m=\sum_{r=1}^3(g_{mr}\partial_{x_3}^2E_1+h_{mr}\partial_{x_3}^2E_0)\tilde d_{r\gamma},
$$
where
$$\eqal{
&E_1=e_1(\bar\xi,x_3+z)-e_1(\bar\xi,x_3),\cr
&E_0=e_0(\bar\xi,x_3+z)-e_0(\bar\xi,x_3)\cr}
$$
we have
$$\eqal{
&\|u\|_{2,B_{p,q,\gamma}^{2+\sigma,1+\sigma/2}(\R_+^3\times\R)}\cr
&=\!\!\bigg[\!\sum_{k=0}^\infty\sum_{m=1}^3\!\bigg(\!\intop_{\R_+^3\times\R}\intop_{\R_+}\! {|F_2^{-1}\varphi_k\!\sum_{r=1}^3(g_{mr}\partial_{x_3}^2E_1\!+\!h_{mr}\partial_{x_3}^2E_0)\tilde d_{r\gamma}|^p\over z^{1+p\sigma}}dxdzdt\!\bigg)^{\!\!q/p}\bigg]^{\!1/q}\cr
&=c\bigg(\sum_{k=0}^\infty\sum_{m=1}^3\sum_{r=1}^3I_{2kmr\gamma}^q\bigg)^{1/q},\cr}
$$
where
$$
I_{2kmr\gamma}=\intop_{\R^3\times\R_+}\intop_{\R_+} {|F_2^{-1}\varphi_k(g_{mr}\partial_{x_3}^2E_1+h_{mr}\partial_{x_3}^2E_0)\tilde d_{r\gamma}|^p\over z^{1+p\sigma}}d\bar xdx_3dz,
$$
where $\bar x=(x',t)$, $x'=(x_1,x_2)$.

Introducing the same family of functions $\{\psi_l(\bar\xi)\}$ as in the proof of Lemma \ref{l4.4} we get
$$\eqal{
I_{2kmr\gamma}&=\bigg(\intop_{\R_+}\intop_{\R^3\times\R_+}\bigg|\sum_{l=0}^\infty {[F_2^{-1}\psi_l(g_{mr}\partial_{x_3}^2E_1+h_{mr}\partial_{x_3}^2E_0)\over z^{1/p+\sigma}}\cdot\cr
&\quad\cdot{F_2F_2^{-1}\varphi_kF_2F_2^{-1}\varphi_lF_2d_{r\gamma}](\bar x,x_3,z)\over z^{1/p+\sigma}}\bigg|^pd\bar xdx_3dz\bigg)^{1/p}\cr
&=\bigg(\intop_{\R_+}\intop_{\R^3\times\R_+}\bigg|\sum_{l=0}^\infty {[F_2^{-1}\psi_l(g_{mr}\partial_{x_3}^2E_1+h_{mr}\partial_{x_3}^2E_0)\over z^{1/p+\sigma}}\cdot\cr
&\quad\cdot{F_2F_2^{-1}\varphi_k\star F_2^{-1}\varphi_lF_2d_{r\gamma}\over z^{1/p+\sigma}}\bigg|^pd\bar xdx_3dz\bigg)^{1/p}\cr
&=\bigg(\intop_{\R_+}\intop_{\R^3\times\R_+}\bigg|\sum_{l=0}^\infty\intop_{\R^3}d\bar y{[F_2^{-1}\psi_l(g_{mr}\partial_{x_3}^2E_1+h_{mr}\partial_{x_3}^2E_0)\over z^{1/p+\sigma}}\cdot\cr
&\quad\cdot{F_2F_2^{-1}\varphi_k](\bar y,x_3)\cdot(F_2^{-1}\varphi_lF_2d_{r\gamma})(\bar x-\bar y)\over z^{1/p+\sigma}}\bigg|^pd\bar xdx_3dz\bigg)^{1/p}.\cr}
$$
Using the formula
\begin{equation}\eqal{
&F_2^{-1}(fF_2F_2^{-1}g)=F_2^{-1}(F_2F_2^{-1}fF_2F_2^{-1}g)\cr
&=F_2^{-1}F_2F_2^{-1}f\star F_2^{-1}g,\cr}
\label{4.49}
\end{equation}
where $f=\psi_l(g_{mr}\partial_{x_3}^2E_1+h_{mr}\partial_{x_3}^2E_0)$, $g=\varphi_k$ we obtain
$$\eqal{
I_{2kmr\gamma}=&\bigg[\intop_{\R_+}\intop_{\R^3\times\R_+}\bigg|\sum_{l=0}^\infty 2^{-4l}\intop_{\R^3}d\bar\omega{[F_2^{-1}\psi_l(g_{mr}\partial_{x_3}^2E_1+h_{mr}\partial_{x_3}^2E_0)\star\over\ }\cr
&{\star F_2^{-1}\varphi_k](2^{-l}\bar\omega,x_3,z)(F_2^{-1}\varphi_lF_2d_{r\gamma})(\bar x-2^{-l}\tilde\omega)\over z^{1/p+\sigma}}\bigg|^pd\bar xdx_3dz\bigg)^{1/p},\cr}
$$
where we used the change of variables $y_i=2^{-l}\omega_i$, $i=1,2$, $y_0=2^{-2l}\omega_0$ and the notation $\tilde\omega=(\omega',2^{-l}\omega_0)$, $\omega'=(\omega_1,\omega_2)$, $\bar\omega=(\omega',\omega_0)$.

Next, formula (\ref{4.49}) yields
$$\eqal{
I_{2kmr\gamma}&=\bigg[\intop_{\R_+}dx_3\intop_{\R^3}d\bar x\intop_{\R_+}{dz\over z^{1+p\sigma}}\bigg|\sum_{l=0}^\infty\intop_{\R^3}d\bar\omega\cdot\cr
&\quad\cdot\{|F_2^{-1}\psi_l(g_{mr}\partial_{x_3}^2E_1+h_{mr}\partial_{x_3}^2E_0)] (2^l\cdot,2^{2l}\cdot)\star\cr
&\quad\star(F_2^{-1}\varphi_k)(2^l\cdot,2^{2l}\cdot)\}(\bar\omega,x_3,z)(F_2^{-1} \varphi_lF_2d_{r\gamma})(\bar x-\bar\omega)\bigg|^p\bigg]^{1/p}\cr
&\le\bigg\{\intop_{\R_+}dx_3\intop_{\R_+}{dz\over z^{1+p\sigma}} \bigg|\sum_{l=0}^\infty\intop_{\R^3}d\bar\omega\{[F_2^{-1}\psi_l(g_{mr}\partial_{x_3}^2E_1+ h_{mr}\partial_{x_3}^2E_0)]\cr
&\quad(2^l\cdot,2^{2l}\cdot)\star (F_2^{-1}\varphi_k)(2^l\cdot,2^{2l}\cdot)(\bar\omega,x_3,z\}\cdot\cr
&\quad\cdot\bigg(\intop_{\R^3}d\bar x|(F_2^{-1}\varphi_lF_2d_{r\gamma})(\bar x-\bar\omega)|^p\bigg)^{1/p}\bigg|^p\bigg\}^{1/p}\equiv I_{2kmr\gamma}^1,\cr}
$$
where we also used the Minkowski inequality.

Applying the change of variables $\bar\zeta=\bar x-\bar\omega$ in the integral with respect to $\bar x$ gives
$$\eqal{
I_{2kmr\gamma}^1&=\bigg\{\intop_{\R_+}dx_3\intop_{\R_+}{dz\over z^{1+p\sigma}} \bigg|\sum_{l=0}^\infty\intop_{\R^3}d\bar\omega\{[F_2^{-1}\psi_l(g_{mr}\partial_{x_3}^2E_1\cr
&\quad+h_{mr}\partial_{x_3}^2E_0)](2^l\cdot,2^{2l}\cdot)\star (F_2^{-1}\varphi_k)(2^l\cdot,2^{2l}\cdot)\}(\bar\omega,x_3,z)\cdot\cr
&\quad\cdot\bigg(\intop_{\R^3}d\bar\zeta|(F_2^{-1}\varphi_lF_2 d_{r\gamma})(\bar\zeta)|^p\bigg)^{1/p}\bigg|^p\bigg\}^{1/p},\cr}
$$
where $\bar\zeta=(\zeta',\zeta_0)$, $\zeta'=(\zeta_1,\zeta_2)$.

Using the H\"older inequality in the integral with respect to $\bar\omega$ and replacing $\bar\omega$ by $\bar y$ and $\bar\zeta$ by $\bar x$ we get
$$\eqal{
I_{2kmr\gamma}^1&\le\bigg\{\intop_{\R_+}dx_3\intop_{\R_+}{dz\over z^{1+p\sigma}} \bigg|\sum_{l=0}^\infty\bigg(\intop_{\R^3}d\bar y{1\over(1+|\bar y|_a)^4}\bigg)^{1/2}\cdot\cr
&\quad\cdot\bigg(\intop_{\R^3}d\bar y|\{[F_2^{-1}\psi_l(g_{mr}\partial_{x_3}^2E_1+ h_{mr}\partial_{x_3}^2E_0)](2^l\cdot,2^{2l}\cdot)\star\cr
&\quad\star(F_2^{-1}\varphi_k)(2^l\cdot,2^{2l}\cdot)\}(\bar y,x_3,z)(1+|\bar y|_a)^2|^{1/2}|^p\bigg\}^{1/p}\cdot\cr
&\quad\cdot\|F_1^{-1}\varphi_lF_1d_{r\gamma}\|_{L_p(\R^3)}\equiv I_{2kmr\gamma}^2.\cr}
$$
Using that
$$
\bigg(\intop_{\R^3}d\bar y{1\over(1+|\bar y|_a)^4}\bigg)^{1/2}\le c
$$
and applying the Parseval identity, we obtain
$$\eqal{
I_{2kmr\gamma}^2&\le c\bigg(\intop_{\R_+}dx_3\intop_{\R_+}{dz\over z^{1+p\sigma}}\bigg|\sum_{l=0}^\infty\|[\psi_l(g_{mr}\partial_{x_3}^2E_1\cr
&\quad+h_{mr}\partial_{x_3}^2E_0)](2^l\cdot,2^{2l}\cdot,x_3,z)\varphi_k (2^l\cdot,2^{2l}\cdot)\|_{W_2^{4,2}(\R^3)}\bigg|^p\bigg)^{1/p}\cdot\cr
&\quad\cdot\|F_1^{-1}\varphi_lF_1d_{r\gamma}\|_{L_p(\R^3)}\equiv I_{2kmr\gamma}^3.\cr}
$$
Lemma \ref{l4.7} below implies
$$
I_{2kmr\gamma}^3\le c\sum_{l=0}^\infty 2^{l(2+\sigma-1/p-c_r)}2^{(8-L_1)|l-k|} \|F_1^{-1}\varphi_lF_1d_{r\gamma}\|_{L_p(\R^3)},
$$
where $c_r=1$ for $r=1,2$, $c_r=0$ for $r=3$.

Hence,
$$\eqal{
&\|u_m\|_{2,B_{p,q,\gamma}^{2+\sigma,1+\sigma/2}(\R_+^3\times\R)}\cr
&\le c\bigg(\sum_{k=0}^\infty\sum_{r=1}^3\sum_{l=0}^\infty 2^{l(2+\sigma-1/p-c_r)q}2^{(8+\varepsilon-L_1)|l-k|} \|F_1^{-1}\varphi_lF_1d_{r\gamma}\|_{L_p(\R^3)}^q\bigg)^{1/q},\cr}
$$
where $\varepsilon>0$ is arbitrary small. Let $L_1>8+\varepsilon$. Then (\ref{4.48}) holds. This ends the proof of Lemma \ref{l4.6}.
\end{proof}

Now, we prove

\begin{lemma}\label{l4.7}
The following inequality holds
\begin{equation}\eqal{
K_{mr}&\equiv\bigg(\intop_{\R_+^3}dx_3\intop_{\R_+}{dz\over z^{1+p\sigma}} \|[\psi_l(g_{mr}\partial_{x_3}^2E_1+h_{mr}\partial_{x_3}^2E_0)]\cr
&\quad(2^l\cdot,2^{2l}\cdot,x_3,z)\varphi_k (2^l\cdot,2^{2l}\cdot)\|_{W_2^{4,2}(\R^2\times\R)}^p\bigg)^{1/p}\cr
&\le c2^{l(2+\sigma-1/p-c_r)}2^{(8-L_1)|l-k|},\cr}
\label{4.50}
\end{equation}
where for $l=0$ the constant $c$ depends on $\gamma$, $L_1$ can be chosen sufficiently large.
\end{lemma}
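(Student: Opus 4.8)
The plan is to reproduce the argument of Lemma~\ref{l4.5}, the only new ingredient being the difference-quotient bounds (\ref{2.14}) and (\ref{2.16}) of Lemma~\ref{l2.19} in place of (\ref{2.13}) and (\ref{2.15}). First I would write out the $W_2^{4,2}(\R^2\times\R)$-norm by the Leibniz rule, so that $K_{mr}$ is dominated by a finite sum, over multi-indices with $\sum_{i=1}^4(s'_i+2s_i)\le 4$, of expressions
$$\bigg(\intop_{\R_+}dx_3\intop_{\R_+}{dz\over z^{1+p\sigma}}\big\|(\partial_\xi^{\bar s_1}\psi_l)\,(\partial_\xi^{\bar s_2}g_{mr})\,(\partial_\xi^{\bar s_3}\partial_{x_3}^2E_1)\,(\partial_\xi^{\bar s_4}\varphi_k)\big\|_{L_2(A)}^p\bigg)^{1/p},$$
all factors being evaluated at $(2^l\cdot,2^{2l}\cdot)$ and the factor involving $E_1$ depending in addition on $x_3$ and $z$, plus the analogous term with $h_{mr}$ and $\partial_{x_3}^2E_0$; here $A=\{\bar\xi\colon|\bar\xi|_a\le4\}$ for $l=0$ and $A=\{\bar\xi\colon1/4\le|\bar\xi|_a\le4\}$ for $l\ne0$, as in the proof of Lemma~\ref{l4.5}. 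On $A$ one has $|\partial_\xi^{\bar s_1}\psi_l(2^l\cdot,2^{2l}\cdot)|\le c$ and, by (\ref{4.44}) and Lemma~\ref{l4.1}, $c_12^l\le|\tau|,|\xi|\le c_22^l$, so the coefficient bounds (\ref{4.45}) apply: $|\partial_\xi^{\bar s_2}g_{mr}(2^l\cdot,2^{2l}\cdot)|\le c$ for $r=1,2$ and $\le c2^l$ for $r=3$, while $|\partial_\xi^{\bar s_2}h_{mr}(2^l\cdot,2^{2l}\cdot)|\le c2^{-l}$ for $r=1,2$ and $\le c$ for $r=3$.

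Next I would move the $L_2(A)$-norm through the $L_p$-norm in $(x_3,z)$, using the generalized Minkowski inequality when $p\ge2$ and, when $p<2$, the inclusion $L_2(A)\subset L_p(A)$ which holds since $|A|\le c$ uniformly in $l$, and then pull out the pointwise bounds for $\psi_l$ and $g_{mr}$ (resp.\ $h_{mr}$). It remains to bound, for the $E_1$-term,
$$\bigg\|\bigg(\intop_{\R_+}dx_3\intop_{\R_+}{dz\over z^{1+p\sigma}}\big|\partial_\xi^{\bar s_3}\partial_{x_3}^2E_1(2^l\xi',x_3,z,2^{2l}\xi_0)\big|^p\bigg)^{1/p}(\partial_\xi^{\bar s_4}\varphi_k)(2^l\xi',2^{2l}\xi_0)\bigg\|_{L_2(A)}$$
and its $E_0$-analogue. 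Since $E_1=e_1(\cdot,x_3+z)-e_1(\cdot,x_3)$ and the $\xi$-derivatives act on the unscaled variable, the chain rule gives $\partial_\xi^{\bar s_3}[\partial_{x_3}^2E_1(2^l\cdot,\cdot,\cdot,2^{2l}\cdot)]=2^{l\bar s_3}(\partial_\xi^{\bar s_3}\partial_{x_3}^2E_1)(2^l\cdot,\cdot,\cdot,2^{2l}\cdot)$; applying (\ref{2.16}) with $\varkappa=\sigma$, $j=2$ and $|\tau|,|\xi|\sim2^l$, the inner $L_p$-norm in $(x_3,z)$ is $\le c\,2^{l\bar s_3}\,2^{l(2+\sigma-\bar s_3-1/p-1)}=c\,2^{l(2+\sigma-1/p-1)}$, so the scaling factor $2^{l\bar s_3}$ cancels the loss $-\bar s_3$ in the kernel estimate, which is why no restriction on $\bar s_3$ beyond $\bar s_3\le4$ is needed. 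In the same way (\ref{2.14}) gives, for the $E_0$-term, the bound $c\,2^{l(2+\sigma-1/p)}$.

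Combining with the coefficient bounds, the $g_{mr}\partial_{x_3}^2E_1$-term is $\le c\,2^{l(2+\sigma-1/p-1)}$ for $r=1,2$ and $\le c\,2^{l(2+\sigma-1/p)}$ for $r=3$, and the $h_{mr}\partial_{x_3}^2E_0$-term gives exactly the same two bounds, i.e.\ in both cases $\le c\,2^{l(2+\sigma-1/p-c_r)}$ with $c_r=1$ for $r=1,2$ and $c_r=0$ for $r=3$, the extra decay of $e_1$ over $e_0$ compensating the gap between the $g_{mr}$- and $h_{mr}$-coefficients. Finally the factor $\|(\partial_\xi^{\bar s_4}\varphi_k)(2^l\cdot,2^{2l}\cdot)\|_{L_2(A)}$ is handled as in (\ref{4.47}): writing $\varphi_k(2^l\cdot,2^{2l}\cdot)=\varphi_k(2^k2^{l-k}\cdot,2^{2k}2^{2(l-k)}\cdot)$, changing variables $y_0=2^{2(l-k)}\xi_0$, $y'=2^{(l-k)}\xi'$ and using $\{\varphi_k\}\in A_{aL_1}(\R^4)$ (Definition~\ref{d2.16}) gives $\le c\,2^{(8-L_1)|l-k|}$. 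Summing the finitely many Leibniz terms yields (\ref{4.50}). The step requiring the most care is exactly this power-counting in $2^l$, together with the convergence of the $z$-integral at $z=0$, which is precisely what (\ref{2.14}) and (\ref{2.16}) furnish.
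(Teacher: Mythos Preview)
Your proposal is correct and follows essentially the same route as the paper's own proof: Leibniz expansion of the $W_2^{4,2}$-norm, the pointwise bound $|\partial_\xi^{\bar s_1}\psi_l(2^l\cdot,2^{2l}\cdot)|\le c$ on $A$, the coefficient estimates (\ref{4.45}), the difference-quotient bounds (\ref{2.14}) and (\ref{2.16}) in place of (\ref{2.13}) and (\ref{2.15}), and the $\varphi_k$ estimate (\ref{4.47}). Your explicit tracking of the chain-rule factor $2^{l\bar s_3}$ and your remark on the Minkowski direction for $p<2$ versus $p\ge2$ are a bit more careful than the paper, but the argument is the same.
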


\begin{proof}
We can write $K_{mr}$ in the form
$$\eqal{
K_{mr}&=\bigg(\intop_{\R_+}dx_3\sum_{\sum_{i=1}^4(s'_i+2s_i)\le 4} \|(\partial_{\xi'}^{s'_1}\partial_{\xi_0}^{s_1}\psi_l)(2^l\cdot,2^{2l}\cdot)\cdot\cr
&\quad\cdot(\partial_{\xi'}^{s'_2}\partial_{\xi_0}^{s_2}g_{mr}\partial_{\xi'}^{s'_3} \partial_{\xi_0}^{s_3}\partial_{x_3}^2E_1+\partial_{\xi'}^{s'_2} \partial_{\xi_0}^{s_2}h_{mr}\partial_{\xi'}^{s'_3}\partial_{\xi_0}^{s_3} \partial_{x_3}^2E_0)(2^;\cdot,x_3,2^{2l}\cdot)\cdot\cr
&\quad\cdot(\partial_{\xi'}^{s'_4}\partial_{\xi_0}^{s_4})\varphi_k (2^l\cdot,2^{2l}\cdot)\|_{L_2(\R^3)}^p\bigg)^{1/p}.\cr}
$$
Using the notation
$$
\partial_\xi^{\bar s_i}=\partial_{\xi'}^{s'_i}\partial_{\xi_0}^{s_i},\quad
\bar s_i=|s'_i|+2s_i,\quad i=1,2,3,4
$$
and the Minkowski inequality we obtain
$$\eqal{
K_{mr}&\le c\sum_{\sum_{i=1}^4\bar s_i\le 4}\bigg[\bigg\|(\partial_\xi^{\bar s_1}\psi_l)(2^l\cdot,2^{2l})(\partial_\xi^{\bar s_2}g_{mr})(2^l\cdot,2^{2l}\cdot)\cr
&\quad\bigg\|{\partial_\xi^{\bar s_3}\partial_{x_3}^2E_1\over z^{1/p+\sigma}}\bigg\|_{L_p(\R_+^2)}(2^l\cdot,2^{2l}\cdot)(\partial_\xi^{\bar s_4}\varphi_k)(2^l\cdot,2^{2l}\cdot)\bigg\|_{L_2(\R^3)}\cr
&\quad+\bigg\|(\partial_\xi^{\bar s_1}\psi_l)(2^l\cdot,2^{2l}\cdot)(\partial_\xi^{\bar s_2}h_{mr})(2^l\cdot,2^{2l}\cdot)\cdot\cr
&\quad\cdot\bigg\|{\partial_\xi^{\bar s_3}\partial_{x_3}^2E_0\over z^{1/p+\sigma}}\bigg\|_{L_p(\R_+^2)}(2^l\cdot,2^{2l}\cdot)(\partial_\xi^{\bar s_4}\varphi_k)(2^l\cdot,2^{2l}\cdot)\bigg\|_{L_2(\R^3)}.\cr}
$$
Using that
$$\eqal{
&|\partial_\xi^{\bar s_1}\psi_l(2^l\cdot,2^{2l}\cdot)|\le c\quad &{\rm for}\ \ \bar\xi\in A,\cr
&\bigg\|{\partial_\xi^{\bar s_2}\partial_{x_3}^2E_1\over z^{1/p+\sigma}}\bigg\|_{L_p(\R_+^2)}\le c2^{l(\sigma-1/p-1)}\quad &{\rm for}\ \ \bar\xi\in A,\ \ l\not=0,\cr
&\bigg\|{\partial_\xi^{\bar s_2}\partial_{x_3}^2E_0\over z^{1/p+\sigma}}\bigg\|_{L_p(\R_+^2)}\le c2^{l(\sigma-1/p)}\quad &{\rm for}\ \ \bar\xi\in A,\ \ l\not=0,\cr
&\sum_{i=0}^1\bigg\|{\partial_\xi^{\bar s_2}\partial_{x_3}^2E_i\over z^{1/p+\sigma}}\bigg\|_{L_p(\R_+^2)}\le c(\gamma)\quad {\rm for}\ \ \bar\xi\in A,\ \ l=0,\cr}
$$
where $A$ is defined in Lemma \ref{l4.5}.

By the above estimates, inequality (\ref{4.50}) follows. This ends the proof.
\end{proof}

The above lemmas imply Theorem \ref{t4.3}.

In view of Theorem \ref{t4.3} and properties of the transformation from problem (\ref{4.1}) to (\ref{4.17}) we have.

\begin{theorem}\label{t4.8}
Let $p\in(1,\infty)$, $\sigma\in(0,1)$. Assume that\\ $f\in B_{p,p}^{\sigma,\sigma/2}(\R_+^3\times\R_+)$, $b_\alpha\in B_{p,p}^{1+\sigma-1/p,(1+\sigma-1/p)/2}(\R^2\times\R_+)$, $\alpha=1,2$,\break
$b_3\in B_{p,p}^{2+\sigma-1/p,1+\sigma/2-1/2p}(\R^2\times\R_+)$, $v_0\in B_{p,p}^{2+\sigma-2/p}(\R_+^3)$.\\
Then there exists a solution to problem (\ref{4.1}) such that\\
$v\in B_{p,p}^{2+\sigma,1+\sigma/2}(\R_+^3\times\R_+)$, $\nabla p\in B_{p,p}^{\sigma,\sigma/2}(\R_+^3\times\R_+)$ and
\begin{equation}\eqal{
&\|v\|_{B_{p,p}^{2+\sigma,1+\sigma/2}(\R_+^3\times\R_+)}+\|\nabla p\|_{B_{p,p}^{\sigma,\sigma/2}(\R_+^3\times\R_+)}\cr
&\le\bigg(c(\|f\|_{B_{p,p}^{\sigma,\sigma/2}(\R_+^3\times\R_+)}+ \|g\|_{B_{p,p}^{1+\sigma,1/2+\sigma/2}(\R_+^3\times\R_+)}\cr
&\quad+\sum_{\alpha=1}^2 \|b_\alpha\|_{B_{p,p}^{1+\sigma-1/p,1/2+\sigma/2-1/2p}(\R^2\times\R_+)}\cr
&\quad+\|b_3\|_{B_{p,p}^{2+\sigma-1/p,1+\sigma/2-1/2p}(\R_+^3\times\R_+)}+ \|v_0\|_{B_{p,p}^{2+\sigma-2/p}(\R_+^3)}\bigg)\cr}
\label{4.51}
\end{equation}
where $c$ does not depend on $v$ neither on $p$.
\end{theorem}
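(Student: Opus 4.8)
The plan is to assemble Theorem~\ref{t4.8} from the reduction already carried out in this section together with Theorems~\ref{t3.3} and~\ref{t4.3}, so that no new Fourier--Laplace analysis is needed. First I would use the time extension of $v_0$ (Lemma~\ref{l2.9} with $l=0$) to pass from (\ref{4.1}) to the problem (\ref{4.6}) with vanishing initial data, absorbing $\|v_0\|_{B_{p,p}^{2+\sigma-2/p}(\R_+^3)}$ into the right-hand side via (\ref{4.4}); the new data $\tilde f,\tilde g,\tilde b$ are then controlled by $f,g,b,v_0$ in the appropriate norms. Next, using Lemmas~\ref{l2.14} and~\ref{l2.15} together with the compatibility conditions (\ref{4.7}) (which are exactly those of Remark~\ref{r1.2}), I would extend $\tilde f,\tilde g,\tilde b$ by zero to $t<0$, obtaining problem (\ref{4.8}) on $\R_+^3\times\R$ with all norms unchanged up to a constant.

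Second, I would remove the inhomogeneous divergence: solve the Neumann problem (\ref{4.9}) for $\varphi'$ with $\Delta\varphi'=g'$ and $\partial_{x_3}\varphi'|_{x_3=0}=0$, for which $\nabla\varphi'\in B_{p,p}^{2+\sigma,1+\sigma/2}(\R_+^3\times\R)$ with norm bounded by $\|g'\|_{B_{p,p}^{1+\sigma,1/2+\sigma/2}(\R_+^3\times\R)}$ -- the half-space counterpart of (\ref{3.25}), obtained by even reflection of $g'$ across $x_3=0$ and the whole-space estimates of Section~\ref{se3}. Setting $v''=v'-\nabla\varphi'$ yields the divergence-free problem (\ref{4.11}); its right-hand side $f''=f'-\nabla\varphi'_t+\nu\Delta\nabla\varphi'$ and its modified tangential data $b''_\alpha=b'_\alpha-2\varphi'_{,x_\alpha x_3}$ are bounded by the data, the latter using the trace estimate for $\varphi'_{,x_\alpha x_3}$ on $x_3=0$.

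Third, I would extend $f''$ by zero to $x_3<0$: by the argument around (\ref{4.13}), which is the spatial counterpart of Lemma~\ref{l2.15}, the extension $\bar f''$ stays in $W_p^{\sigma,\sigma/2}(\R^3\times\R)$ with controlled norm, so Theorem~\ref{t3.3} applied to (\ref{4.14}) produces $\bar v'',\nabla\bar p''$ satisfying (\ref{4.15}). Putting $u=v''-\bar v''$, $q=p'-\bar p''$ gives the boundary value problem (\ref{4.17}), with $d_\alpha=b''_\alpha-\bar v''_{3,x_\alpha}-\bar v''_{\alpha,x_3}$ and $d_3=b''_3-\bar v''_3$. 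The key point here is the trace estimate: from $\bar v''\in B_{p,p}^{2+\sigma,1+\sigma/2}(\R^3\times\R)$ one gets $\bar v''|_{x_3=0}\in B_{p,p}^{2+\sigma-1/p,1+\sigma/2-1/2p}$ and $\nabla\bar v''|_{x_3=0}\in B_{p,p}^{1+\sigma-1/p,1/2+\sigma/2-1/2p}$, so $d_\alpha\in B_{p,p}^{1+\sigma-1/p,(1+\sigma-1/p)/2}(\R^2\times\R)$ and $d_3\in B_{p,p}^{2+\sigma-1/p,(2+\sigma-1/p)/2}(\R^2\times\R)$, with norms controlled by the original data; Theorem~\ref{t4.3} then solves (\ref{4.17}) and furnishes (\ref{4.35}) and (\ref{4.36}).

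Finally, I would unwind the substitutions, writing $v=\tilde v_0+\nabla\varphi'+\bar v''+u$ and $\nabla p=\nabla\bar p''+\nabla q$, collect the bounds (\ref{4.4}), (\ref{4.15}), (\ref{4.35}), (\ref{4.36}) together with the norm estimates for $\nabla\varphi'$ and for the zero-extensions, and restrict from $\R_+^3\times\R$ back to $\R_+^3\times\R_+$, which only decreases norms; this yields (\ref{4.51}). I expect the main obstacle to be the bookkeeping of the trace and extension estimates -- in particular verifying that the compatibility conditions of Remark~\ref{r1.2} are precisely what makes the zero-extensions in $t$ (Lemmas~\ref{l2.14}, \ref{l2.15}) and in $x_3$ admissible at each stage, and confirming that the Neumann solution $\varphi'$ has full regularity up to $x_3=0$ -- rather than any genuinely new analytic difficulty, since the hard estimates are already contained in Theorems~\ref{t3.3} and~\ref{t4.3}.
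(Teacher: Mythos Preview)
Your proposal is correct and follows exactly the same approach as the paper: the paper states Theorem~\ref{t4.8} with the single-line justification ``In view of Theorem~\ref{t4.3} and properties of the transformation from problem~(\ref{4.1}) to~(\ref{4.17}) we have,'' and that transformation is precisely the chain (\ref{4.1})$\to$(\ref{4.6})$\to$(\ref{4.8})$\to$(\ref{4.11})$\to$(\ref{4.14})$\to$(\ref{4.17}) that you have spelled out. You have simply made explicit the bookkeeping of the extensions, the Neumann subtraction, the trace estimates, and the unwinding that the paper leaves implicit in the preceding text of Section~\ref{se4}.
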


\subsection{The Stokes system in neighborhoods of edges.}

Finally, we want to solve the Stokes system in a neighborhood $\Omega(\xi)$ of a point $\xi\in L_i$, $i=1,2$. Along the edge $L_i$, $i=1,2$, $S_2$ meets $S_1$ under angle $\pi/2$. Let $\zeta$ be a smooth function from the partition of unity such that
$$
\supp\zeta=\Omega(\xi).
$$
Introduce a local system of coordinates $(x_1,x_2,x_3)$ with origin at $\xi$ such that $S_2\cap\Omega(\xi)$ is described by $x_3=0$. Next we transform $S_1\cap\Omega(\xi)$ to the plane $x_1=0$ by making an appropriate extension. Then $L_i$, $i=1,2$, becomes a straight line $x_1=0$, $x_3=0$ so it is the $x_2$-axis.

Therefore, the transformed Stokes system takes the form
\begin{equation}\eqal{
&v_t-\nu\Delta v+\nabla p=f,\cr
&\divv v=0\cr}
\label{4.52}
\end{equation}
in the dihedral angle $\pi/2$ located between planes $x_1=0$ and $x_3=0$, denoted by ${\cal D}_{\pi/2}\times\R_+$. On the plane $x_3=0$ we have the boundary conditions
\begin{equation}\eqal{
&v_3=b'_3,\cr
&v_{3,x_1}+v_{1,x_3}=b'_1,\cr
&v_{3,x_2}+v_{2,x_3}=b'_2\cr}
\label{4.53}
\end{equation}
and on $x_1=0$ we have
\begin{equation}\eqal{
&v_1=b''_1,\cr
&v_{1,x_2}+v_{2,x_1}=b''_2,\cr
&v_{1,x_3}+v_{3,x_1}=b''_3.\cr}
\label{4.54}
\end{equation}
Boundary conditions (\ref{4.54}) can be expressed in the form
\begin{equation}\eqal{
&v_1=b''_1,\cr
&v_{2,z_1}=b''_2-b''_{1,x_2},\cr
&v_{3,x_1}=b''_3-b''_{1,x_3}.\cr}
\label{4.55}
\end{equation}
Hence $(\ref{4.55})_1$ is the Dirichlet boundary condition and $(\ref{4.55})_{2,3}$ are the Neumann boundary conditions.

We transform solutions to problem (\ref{4.52}), (\ref{4.53}), (\ref{4.54}) in such a way that the boundary conditions (\ref{4.55}) become homogeneous. Then the solutions are extended by reflection on $x_1<0$. Thus we obtain problem (\ref{4.52}), (\ref{4.53}) in the half space $x_3>0$.

Then Theorem \ref{t4.3} is also valid in this case.

\section{The Stokes system in the cylindrical\\ domain $\Omega$}\label{se5}

We consider the following initial-boundary value problem for the Stokes system
\begin{equation}\eqal{
&v_t-\nu\Delta v+\nabla p=f_0\quad &{\rm in}\ \ \Omega\times(0,\tau),\cr
&\divv v=g_0\quad &{\rm in}\ \ \Omega\times(0,\tau),\cr
&\bar n\cdot\D(v)\cdot\bar\tau_\alpha=b_{0\alpha},\ \ \alpha=1,2,\quad &{\rm on}\ \ S\times(0,\tau),\cr
&v\cdot\bar n=b_{03}\quad &{\rm on}\ \ S\times(0,\tau),\cr
&v|_{t=0}=v_0\quad &{\rm in}\ \ \Omega,\cr}
\label{5.1}
\end{equation}
where $S=S_1\cup S_2$, $\bar n$ is the unit outward normal vector to $S$ and $\bar\tau_1$, $\bar\tau_2$ are tangent vectors to $S$.

Let $\tilde v_0$ be the time extension of $v_0$ such that
\begin{equation}
\tilde v_0|_{t=0}=v_0
\label{5.2}
\end{equation}
In view of (\ref{4.5}) we can introduce the new function
\begin{equation}
u=v-\tilde v_0
\label{5.3}
\end{equation}
and $(u,p)$ is a solution to the following initial-boundary value problem with vanishing initial data
\begin{equation}\eqal{
&u_t-\nu\Delta u+\nabla p=f_0-\tilde v_{0t}+\nu\Delta\tilde v_0\equiv f\quad &{\rm in}\ \ \Omega\times(0,\tau)\equiv\Omega^\tau,\cr
&\divv u=g_0-\divv\tilde v_0\equiv g\quad &{\rm in}\ \ \Omega\times(0,\tau)\equiv\Omega^\tau,\cr
&B_\alpha(u)\equiv\bar n\cdot\D(u)\cdot\bar\tau_\alpha=b_{0\alpha}-\bar n\cdot\D(\tilde v_0)\cdot\bar\tau_\alpha\equiv b_\alpha,\ \ &\alpha=1,2,\cr
&B_3(u)\equiv u\cdot\bar n=b_{03}-\tilde v\cdot\bar n\equiv b_3\quad &{\rm on}\ \ S\times(0,\tau)\equiv S^\tau,\cr}
\label{5.4}
\end{equation}
where $b_\alpha(u)$, $\alpha=1,2$, is the Neumann boundary condition and $B_3(u)$ the Dirichlet condition.

Using the technique of regularizer we prove

\begin{lemma}\label{l5.1}\ \\
Let $p\in(1,\infty)$, $\sigma\not\in\N$. Let $f\in W_p^{\sigma,\sigma/2}(\Omega^\tau)$, $g\in W_p^{1+\sigma,1/2+\sigma/2}(\Omega^\tau)$, $b_\alpha\in W_p^{1+\sigma-1/p,1/2+\sigma/2-1/2p}(S^\tau)$, $\alpha=1,2$, $b_3\in W_p^{2+\sigma-1/p,1/2+\sigma/2-1/2p}(S^\tau)$.\\
Then there exists a solution to problem (\ref{5.4}) such that $u\in W_p^{2+\sigma,1+\sigma/2}(\Omega^\tau)$, $\nabla p\in W_p^{\sigma,\sigma/2}(\Omega^\tau)$, where $\tau$ is sufficiently small, and there exists a constant $c$ independent of $u$, $p$ such that
\begin{equation}\eqal{
&\|u\|_{W_p^{2+\sigma,1+\sigma/2}(\Omega^\tau)}+\|\nabla p\|_{W_p^{\sigma,\sigma/2}(\Omega^\tau)}\cr
&\le c\bigg(\|f\|_{W_p^{\sigma,\sigma/2}(\Omega^\tau)}+ \|g\|_{W_p^{1+\sigma,1/2+\sigma/2}(\Omega^\tau)}+\sum_{\alpha=1}^2\|b_\alpha\|_{W_p^{1+\sigma-1/p,1/2+\sigma/2-1/2p}(S^\tau)}\cr
&\quad+\|b_3\|_{W_p^{2+\sigma-1/p,1+\sigma/2-1/2p}(S^\tau)}\bigg).\cr}
\label{5.5}
\end{equation}
\end{lemma}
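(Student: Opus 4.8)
The plan is to use the technique of regularizer (parametrix construction) based on the partition of unity from Definition \ref{d2.17}. First I would fix $\lambda>0$ small and take the finite families of subdomains $\omega^{(k)}$, $\Omega^{(k)}$ together with the cut-off functions $\zeta^{(k)}$, $\eta^{(k)}$ with $\sum_k\eta^{(k)}\zeta^{(k)}=1$. The index set splits into interior cubes ($k\in\mathfrak{M}$), cubes centered on $S_1$ and $S_2$ ($k\in\mathfrak{N}_1\cup\mathfrak{N}_2$), and cubes centered on the edges $L_1,L_2$ ($k\in\mathfrak{N}_3$). For each $k$ I multiply the data of \eqref{5.4} by $\zeta^{(k)}$ and freeze coefficients: after the change of variables \eqref{2.11} or \eqref{2.12} the localized problem becomes, up to lower-order commutator terms, the Stokes system \eqref{3.1} in $\R^3$ (interior), \eqref{4.1} in $\R_+^3$ (near $S_1,S_2$), or the dihedral-angle problem of Section \ref{se4} reduced by reflection to the half-space problem (near $L_\alpha$). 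The commutators $[\zeta^{(k)},\nu\Delta]u$, $[\zeta^{(k)},\nabla]p$, etc., are of lower order and carry a factor that is small in $\lambda$ (derivatives of $\zeta^{(k)}$ are $O(\lambda^{-1})$ but act on lower-order terms, so after interpolation they gain a positive power of $\tau$ or $\lambda$).

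Next I would invoke the solvability results already proved: Theorem \ref{t3.3} for the interior pieces, Theorem \ref{t4.8} for the pieces near $S_1$ and $S_2$, and Theorem \ref{t4.3} (valid also in the dihedral-angle case, as noted at the end of Section \ref{se4}) for the edge pieces. This yields, for each $k$, a solution operator $\mathcal{R}_k$ producing $(v^{(k)},p^{(k)})$ in the right classes with norms controlled by the localized data. I then define the regularizer $\mathcal{R}$ by $\mathcal{R}(f,g,b) = \sum_k \eta^{(k)}\,(v^{(k)},p^{(k)})$ (with the $z$-variables transformed back). Applying the Stokes operator $\mathcal{A}$ of \eqref{5.4} to $\mathcal{R}(f,g,b)$ and using $\sum_k\eta^{(k)}\zeta^{(k)}=1$ gives $\mathcal{A}\mathcal{R} = \mathrm{Id} + \mathcal{T}$, where $\mathcal{T}$ collects all the commutator terms. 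Using Lemma \ref{l2.12}, Lemma \ref{l2.13}, the bounded-overlap property 3 of the covering, and the smallness of $\tau$ (plus smallness of $\lambda$ for the $S_1\in C^3$ curvature terms via \eqref{2.10}), I would show $\|\mathcal{T}\| \le 1/2$ in the product Besov norm, so $\mathrm{Id}+\mathcal{T}$ is invertible by Neumann series and $(v,p)=\mathcal{R}(\mathrm{Id}+\mathcal{T})^{-1}(f,g,b)$ solves \eqref{5.4} with the estimate \eqref{5.5}.

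The main obstacle will be the bookkeeping of norms under the partition of unity: one must verify that summing the local estimates $\sum_k\|\cdot\|^q_{\text{local}}$ reproduces the global Besov/Sobolev-Slobodetskii norm up to a constant depending only on $N_0$ (property 3) and not on the number of cubes — this requires the equivalence of norms over arbitrary open subsets (Lemma \ref{l2.12}) and careful handling of the Slobodetskii double integrals across cube boundaries, where the difference quotients in \eqref{2.5} couple neighboring pieces. The second delicate point is controlling the edge contributions: near $L_\alpha$ one has only $S_1\in C^3$ and the flattening transformation \eqref{2.12} introduces variable coefficients whose $C^{1+\alpha}$ bound \eqref{2.10} must be absorbed into $\mathcal{T}$, which forces $\lambda$ to be chosen small before $\tau$. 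Finally, a minor but necessary step is checking that the compatibility conditions in \eqref{4.7} (and their analogues) are inherited by each localized datum from the hypotheses on $f,g,b$, so that the zero-extension in time used in Sections \ref{se3}–\ref{se4} is legitimate for every $k$.
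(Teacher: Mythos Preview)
Your plan is correct and follows essentially the same regularizer/parametrix approach as the paper: localize via the partition of unity of Definition~\ref{d2.17}, solve the model problems using Theorem~\ref{t3.3}, Theorem~\ref{t4.8}, and the dihedral-angle reduction at the end of Section~\ref{se4}, assemble $\mathcal{R}(f,g,b)=\sum_k\eta^{(k)}(v^{(k)},p^{(k)})$, and invert $\mathrm{Id}+\mathcal{T}$ for $\lambda,\tau$ small. The only structural difference is that the paper additionally writes out the left parametrix identity $RAw=w+Ww$ (with $\|W\|<1$), whereas you work only with the right parametrix $\mathcal{A}\mathcal{R}=\mathrm{Id}+\mathcal{T}$; since existence and the estimate \eqref{5.5} already follow from boundedness of $\mathcal{R}$ together with $(\mathrm{Id}+\mathcal{T})^{-1}$, your version suffices for the statement as given.
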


\begin{proof}
To prove the lemma we use the partition of unity introduced in Definition~\ref{d2.17}. We introduce the simplified notation
$$\eqal{
&L(\partial_x,\partial_t)=\left(\matrix{\partial_t-\nu\Delta,&\nabla\cr \divv&0\cr}\right),\cr
&B(\partial_x)=\left(\matrix{\bar n\cdot\D(\cdot)\bar\tau_1\cr \bar n\cdot\D(\cdot)\bar\tau_2\cr \bar n\cdot\cr}\right),\cr}
$$
where
$$
L(\partial_x,\partial_t)(u,p)=[L(\partial_x,\partial_t){u\choose p}]^T=\left( \matrix{\partial_tu-\nu\Delta u+\nabla p\cr \divv u\cr}\right)^T
$$
and
$$
B(\partial_x)|_{x_3=0}\equiv[B(\partial_x)|_{x_3=0}u]^T=\left(
\matrix{\bar e_3\cdot\D(u)\cdot\bar e_1\cr \bar e_3\cdot\D(u)\cdot\bar e_2\cr \bar e_3\cdot u\cr}\right)^T,
$$
where $\bar e_1=(1,0,0)$, $\bar e_2=(0,1,0)$, $\bar e_3=(0,0,1)$.
\goodbreak
Let $k\in{\cal M}_1$ and $f^{(k)}(x,t)=\zeta^{(k)}(x)f(x,t)$, $g^{(k)}(x,t)=\zeta^{(k)}g(x,t)$.

Let $f^{(k)}\in W_p^{\sigma,\sigma/2}(\R^3\times\R_+)$, $g^{(k)}\in W_p^{1+\sigma,1/2+\sigma/2}(\R^3\times\R_+)$. By $\R^{(k)}$, $k\in{\cal M}$, we denote the operator which solves the Cauchy problem with vanishing initial data
$$
L(\partial_x,\partial_t)(u^{(k)},p^{(k)})=(f^{(k)}(x,t),g^{(k)}(x,t)).
$$
In view of Theorem \ref{t3.3} there exists an operator $R^{(k)}$  such that\break $(u^{(k)},p^{(k)})=R^{(k)}(f^{(k)},g^{(k)})$ and
\begin{equation}\eqal{
&\|R^{(k)}(f^{(k)},g^{(k)})\|_{W_p^{2+\sigma,1+\sigma/2}(\R^3\times(0,\tau))\times W_p^{\sigma,\sigma/2}(\R^3\times(0,\tau))}\cr
&\le c(\|f^{(k)}\|_{W_p^{\sigma,\sigma/2}(\R^3\times(0,\tau))}+ \|g^{(k)}\|_{W_p^{1+\sigma,1/2+\sigma/2}(\R^3\times(0,\tau))}).\cr}
\label{5.6}
\end{equation}
Let $k\in{\cal N}_1$. Then $\supp\zeta^{(k)}$ is a neighborhood of a point $\xi\in S_1$ located at a positive distance from edges $L_1$, $L_2$.

Then after a transformation $x=x(z)$ which makes flat the part of $S_1$ equal to $S_1\cap\supp\zeta^{(k)},$ we consider the problem
\begin{equation}\eqal{
&L(\partial_z,\partial_t)(u^{(k)},p^{(k)})=(f^{(k)}(z,t),g^{(k)}(z,t)),\cr
&B(\partial_z)|_{z_3=0}u^{(k)}=b^{(k)}(z,t),\ \ z_3=0.\cr}
\label{5.7}
\end{equation}
Let $R^{(k)}$, $k\in{\cal N}_1$, present a solution to problem (\ref{5.7}) by
$$
(u^{(k)},p^{(k)})=R^{(k)}(f^{(k)},g^{(k)},b^{(k)}),\ \ k\in{\cal N}_1
$$
and
\begin{equation}\eqal{
&\|R^{(k)}(f^{(k)},g^{(k)})\|_{W_p^{2+\sigma,1+\sigma/2}(\R^3\times(0,\tau))\times W_p^{\sigma,\sigma/2}(\R^3\times(0,\tau)}\le c\bigg(\|f^{(k)}\|_{W_p^{\sigma,\sigma/2}(\R^3\times(0,\tau))}\cr
&\quad+\|g^{(k)}\|_{W_p^{1+\sigma,1/2+\sigma/2}(\R^3\times(0,\tau))}+ \sum_{\alpha=1}^2 \|b_\alpha^{(k)}\|_{W_p^{1+\sigma-1/p,1/2+\sigma/2-1/2p}(\R^2\times(0,\tau))}\cr
&\quad+\|b_3^{(k)}\|_{W_p^{2+\sigma-1/p,1+\sigma/2-1/2p}(\R^2\times(0,\tau))} \bigg).\cr}
\label{5.8}
\end{equation}
Let $k\in{\cal N}_2$. Then $\supp\zeta^{(k)}$ is a neighborhood of a point $\xi\in S_2$ located at a positive distance from edges $L_\alpha$, $\alpha=1,2$. Since $S_2$ is flat we do not need to pass to variables $z$. Therefore the considered problem in $\supp\zeta^{(k)}$, $k\in{\cal N}_2$, can be formulated in the original coordinates $x$. Hence it has the form
\begin{equation}\eqal{
&L(\partial_x,\partial_t)(u^{(k)},p^{(k)})=(f^{(k)}(x,t),g^{(k)}(x,t)),\cr
&B(\partial_x)|_{x_3=0}u=b^{(k)}(x',t),\cr}
\label{5.9}
\end{equation}
where $x'=(x_1,x_2)$.

Let $R^{(k)}$, $k\in{\cal N}_2$, present a solution to problem (\ref{5.9}). This has the form
$$
(u^{(k)},p^{(k)})=R^{(k)}(f^{(k)},g^{(k)},b^{(k)}),\quad k\in{\cal N}_2
$$
and
\begin{equation}\eqal{
&\|u^{(k)}\|_{W_p^{2+\sigma,1+\sigma/2}(\R_+^3\times(0,\tau))}+\|\nabla p^{(k)}\|_{W_p^{\sigma,\sigma/2}(\R_+^3\times(0,\tau))}\cr
&=\|R^{(k)}(f^{(k)},g^{(k)},b^{(k)})\|_{W_p^{2+\sigma,1+\sigma/2}(\R_+^3\times(0,\tau)) \times W_p^{\sigma,\sigma/2}(\R_+^3\times(0,\tau))}\cr
&\le c\bigg(\|f^{(k)}\|_{W_p^{\sigma,\sigma/2}(\R_+^3\times(0,\tau))}+ \|g^{(k)}\|_{W_p^{1+\sigma,1/2+\sigma/2}(\R_+^3\times(0,\tau))}\cr
&\quad+\sum_{\alpha=1}^2 \|b_\alpha^{(k)}\|_{W_p^{1+\sigma-1/p,1/2+\sigma/2-1/2p}(\R^2\times(0,\tau))}\cr
&\quad+\|b_3^{(k)}\|_{W_p^{2+\sigma-1/p,1+\sigma/2-1/2p}(\R^2\times(0,\tau))} \bigg).\cr}
\label{5.10}
\end{equation}
For $k\in{\cal N}_3$, $\supp\zeta^{(k)}$ is a neighborhood of a point of $L_\alpha$, $\alpha\in\{1,2\}$. In order to consider the Stokes system in this neighborhood we have to use a transformation which makes $S_1\cap\supp\zeta^{(k)}$ flat. Then $L_\alpha$, $\alpha\in\{1,2\}$, becomes a straight line. Therefore, the considered domain becomes the dihedral angle of the magnitude $\pi/2$, where the one side is the plane $x_3=0$ derived by extension of $S_2\cap\supp\zeta^{(k)}$ and the other side is the plane $x_2=0$ which is an extension of the flatten part of $S_1\cap\supp\zeta^{(k)}$.

Now we formulate the problem in the case $k\in{\cal N}_3$. On the plane $x_3=0$ we have the boundary conditions
\begin{equation}\eqal{
&u_{3,x_\alpha}^{(k)}+u_{\alpha,x_3}^{(k)}=b_\alpha^{(k)},\ \ \alpha=1,2,\cr
&u_3^{(k)}=b_3^{(k)}\cr}
\label{5.11}
\end{equation}
and on the plane $x_2=0$ we have
\begin{equation}\eqal{
&u_{2,x_\alpha}^{(k)}+u_{\alpha,x_2}^{(k)}=\bar b_\alpha^{(k)},\ \ \alpha=1,2,\cr
&u_2^{(k)}=0,\cr}
\label{5.12}
\end{equation}
where $\bar b_\alpha^{(k)}$, $\alpha=1,2$, are derived from $b_\alpha$, $\alpha=1,2$, from (\ref{5.4}) restricted to $S_1\cap\supp\zeta^{(k)}$ and transformed by transformation which makes $S_1\cap\supp\zeta^{(k)}$ flat.

We simplify boundary conditions (\ref{5.12}) to the form
\begin{equation}\eqal{
&u_{\alpha,x_2}^{(k)}=\bar b_\alpha^{(k)},\ \ \alpha=1,3,\quad &{\rm on}\ \ x_2=0,\cr
&u_2^{(k)}=0\quad &{\rm on}\ \ x_2=0.\cr}
\label{5.13}
\end{equation}
Now, we perform a transformation which makes the Neumann boundary conditions homogeneous. Then we can extend $u^{(k)}$, $p^{(k)}$ by the reflection with respect to the plane $x_2=0$. Formally, the problem has the same form as problem (\ref{5.9}) for $k\in{\cal N}_2$.

Let $R^{(k)}$ solves the problem. Then we have
$$
(u^{(k)},\nabla p^{(k)})=R^{(k)}(f^{(k)},g^{(k)},b^{(k)}|_{S_1},b^{(k)}|_{S_2})
$$
and the estimate
\begin{equation}\eqal{
&\|u^{(k)}\|_{W_p^{2+\sigma,1+\sigma/2}(\R_+^3\times(0,\tau))}+\|\nabla p^{(k)}\|_{W_p^{\sigma,\sigma/2}(\R_+^3\times(0,\tau))}\cr
&\le c\bigg(\|f^{(k)}\|_{W_p^{\sigma,\sigma/2}(\R_+^3\times(0,\tau))}+ \|g^{(k)}\|_{W_p^{1+\sigma,1/2+\sigma/2}(\R_+^3\times(0,\tau))}\cr
&\quad+\sum_{\beta=1}^2\sum_{\alpha=1}^2 \|b_\alpha^{(k)}|_{S_\beta}\|_{W_p^{1+\sigma-1/p,1/2+\sigma/2-1/2p}(\R^2\times(0,\tau))}\cr
&\quad+\|b_3^{(k)}|_{S_2}\|_{W_p^{2+\sigma-1/p,1+\sigma/2-1/2p}(\R^2\times(0,\tau))} \bigg).\cr}
\label{5.14}
\end{equation}
Now, we construct an operator $R$ called the regularizer.

Let
$$\eqal{
&h^{(k)}=(f^{(k)},g^{(k)})\quad &{\rm for}\ \ k\in{\cal M},\cr
&h^{(k)}=(f^{(k)},g^{(k)},\nad{b}1^{(k)})\quad &{\rm for}\ \ k\in{\cal N}_1,\cr
&h^{(k)}=(f^{(k)},g^{(k)},\nad{b}2^{(k)})\quad &{\rm for}\ \ k\in{\cal N}_2,\cr
&h^{(k)}=(f^{(k)},g^{(k)},\nad{b}1^{(k)},\nad{b}2^{(k)})\quad &{\rm for}\ \ k\in{\cal N}_3,\cr}
$$
where $\nad{b}1^{(k)}$ is defined on $S_1\cap\supp\zeta^{(k)}\times(0,\tau)$ and $\nad{b}2^{(k)}$ on $S_2\cap\supp\zeta^{(k)}\times(0,\tau)$.

Next, we introduce
$$\eqal{
\|h\|_{S_p^\sigma}&=\sum_{k\in{\cal M}\cup{\cal N}_1\cup{\cal N}_2\cup{\cal N}_3}(\|f^{(k)}\|_{W_p^{\sigma,\sigma/2}(\R_+^3\times(0,\tau))}+\|g^{(k)}\|_{W_p^{1+\sigma,1/2+\sigma/2}(\R_+^3\times(0,\tau))}\cr
&\quad+\sum_{k\in{\cal N}_1\cup{\cal N}_3}\bigg(\sum_{\alpha=1}^2 \|\nad{b}1_\alpha^{(k)}\|_{W_p^{1+\sigma-1/p,1/2+\sigma/2-1/2p}(\R^2\times(0,\tau))}\cr
&\quad+\|\nad{b}1_3^{(k)}\|_{W_p^{2+\sigma-1/p,1+\sigma/2-1/2p}(\R^2\times(0,\tau))} \bigg)\cr
&\quad+\sum_{k\in{\cal N}_2\cup{\cal N}_3}\bigg(\sum_{\alpha=1}^2 \|\nad{b}2_\alpha^{(k)}\|_{W_p^{1+\sigma-1/p,1/2+\sigma/2-1/2p}(\R^2\times(0,\tau))}\cr
&\quad+\|\nad{b}2_3^{(k)}\|_{W_p^{2+\sigma-1/p,1+\sigma/2-1/2p}(\R^2\times(0,\tau))} \bigg).\cr}
$$
Then we define an operator $R$ by
$$
Rh=\sum_k\eta^{(k)}(u^{(k)},\nabla p^{(k)})\equiv\sum_k\eta^{(k)}w^{(k)},
$$
where
$$
w^{(k)}=(u^{(k)},\nabla p^{(k)}=\left\{\eqal{
&R^{(k)}(f^{(k)},g^{(k)})\quad &k\in{\cal M},\cr
&Z_kR^{(k)}(Z_k^{-1}f^{(k)},Z_k^{-1}g^{(k)},Z_k^{-1}\nad{b}1^{(k)})\quad &k\in{\cal N}_1,\cr
&R^{(k)}(f^{(k)},g^{(k)},\nad{b}2^{(k)})\quad &k\in{\cal N}_2,\cr
&R^{(k)}(f^{(k)},g^{(k)},\nad{b}1^{(k)},\nad{b}2^{(k)})\quad &k\in{\cal N}_3,\cr}\right.
$$
where $Z_k$ is a map from coordinates $z$, which makes $S_1\cap\supp\zeta^{(k)}$ flat, to coordinates $x$ and
$$\eqal{
\|Rh\|_{{\cal B}_p^{\sigma+2}}&=\sum_k(\|u^{(k)}\|_{W_p^{2+\sigma,1+\sigma/2}(\R_+^3\times(0,\tau))}\cr
&\quad+\|\nabla p^{(k)}\|_{W_p^{\sigma,\sigma/2}(\R_+^3\times(0,\tau))}).\cr}
$$

\begin{lemma}\label{l5.2}
The operator $R\colon S_p^\sigma\to{\cal B}_p^{\sigma+2}$ is a bounded operator and
\begin{equation}
\|Rh\|_{{\cal B}_p^{\sigma+2}}\le c\|h\|_{{\cal S}_p^\sigma}.
\label{5.15}
\end{equation}
Let $w=(u,\nabla p)$ and let the considered problem be denoted by $A$. Then our aim is to show the existence of operators $T$ and $W$ such that
$$\eqal{
&ARh=h+Th,\cr
&RAw=w+Ww.\cr}
$$
Hence, to show the existence of solutions to problem (\ref{5.4}) we have to prove that $\|T\|<1$, $\|W\|\le 1$.
\end{lemma}

This will be shown for sufficiently small $\lambda$ and $\tau$.

Now we construct operator $T$. The operator can be divided into two parts: $T=(T_1,T_2)$, where
$$\eqal{
&LRh=h+T_1h,\cr
&BRh=h+T_2h.\cr}
$$
First we describe $T_1$,
$$\eqal{
T_1h&=\sum_{k\in{\cal M}}(L\eta^{(k)}w^{(k)}-\eta^{(k)}Lw^{(k)})+\sum_{k\in{\cal N}_1}[(L\eta^{(k)}w^{(k)}-\eta^{(k)}Lw^{(k)})\cr
&\quad+\eta^{(k)}Z_k(L(\partial_z-\nabla F_k\partial_{z_3},\partial_t)-L(\partial_z,\partial_t))Z_k^{-1}R^{(k)}h^{(k)}]\cr
&\quad+\sum_{k\in{\cal N}_2}(L\eta^{(k)}w^{(k)}-\eta^{(k)}Lw^{(k)})\cr
&\quad+\sum_{k\in{\cal N}_3}(L\circ\Phi_1^{(k)}\eta^{(k)}w^{(k)}-\eta^{(k)}L\circ \Phi_1^{(k)}w^{(k)}),\cr}
$$
where $\Phi_1^{(k)}$ is a map which is a composition of the following transformations:
\begin{itemize}
\item[1.] A neighborhood $\supp\zeta^{(k)}$ of a point $\xi^{(k)}\in L_i$, $i=1,2$, is transformed to the $\pi/2$-dihedral angle with edge $\bar L_i$ and sides $\bar S_1^{(k)}$ and $\bar S_2^{(k)}$, where $\bar L_i$ is a straight line passing through $\xi^{(k)}$, $\bar S_i^{(k)}$, $i=1,2$, are planes derived by extension of $S_2\cap\supp\zeta^{(k)}$ and $S_1\cap\supp\zeta^{(k)}$. We have to emphasize that $S_1\cap\supp\zeta^{(k)}$ can be transformed locally to a plane.
\item[2.] Nonhomogeneous Neumann and Dirichlet problems on $\bar S_1^{(k)}$ are transformed to homogeneous.
\item[3.] Finally, the considered localized problem on $\supp\zeta^{(k)}$ is reflected with respect to plane $\bar S_1^{(k)}$.
\end{itemize}

Next we construct $T_2$,
$$\eqal{
T_2h&=\sum_{k\in{\cal N}_1}\bigg\{\sum_{\alpha=1}^3[B_\alpha\eta^{(k)}w^{(k)}- \eta^{(k)}B_\alpha w^{(k)})\cr
&\quad+\eta^{(k)}(B_\alpha(x,t,\partial_x)-B_\alpha(\xi^{(k)},0,\partial_x)) w^{(k)}|_{S_1^{(k)}}]\cr
&\quad+\sum_{\alpha=1}^2\eta^{(k)}Z_k(B_\alpha(\xi^{(k)},0,\partial_z-\nabla F_k\partial_{z_3})-B_\alpha(\xi^{(k)},0,\partial_z))R^{(k)}h^{(k)}\bigg\}\cr
&\quad+\sum_{k\in{\cal N}_2}\sum_{\alpha=1}^3[(B_\alpha\eta^{(k)}w^{(k)}- \eta^{(k)}B_\alpha w^{(k)})\cr
&\quad+\eta^{(k)}(B_\alpha(x,t,\partial_x)-B_\alpha(\xi^{(k)},0,\partial_x)) w^{(k)}]|_{S_2^{(k)}}\cr
&\quad+\sum_{k\in{\cal N}_3}\sum_{\alpha=1}^3(B_\alpha\circ\Phi_2^{(k)}\eta^{(k)} w^{(k)}-\eta^{(k)}B_\alpha\circ\Phi_2^{(k)}w^{(k)}),\cr}
$$
where $\Phi_2^{(k)}$ is equal to map $\Phi_1^{(k)}$ restricted to the boundary $S$.

For $\tau$ and $\lambda$ sufficiently small the norm $\|T\|_{S_p^\sigma\to S_p^\sigma}$ is less than 1.

Next we construct an operator $W$
$$\eqal{
Ww&=\sum_{k\in{\cal M}}\eta^{(k)}R^{(k)}(\zeta^{(k)}Lw-L\zeta^{(k)}w)\cr
&\quad+\sum_{k\in{\cal N}_1}\sum_{\alpha=1}^3\eta^{(k)}Z_kR^{(k)}[Z_k^{-1} (\zeta^{(k)}Lw-L\zeta^{(k)}w),\cr
&\quad Z_k^{-1}(\zeta^{(k)}B_\alpha w-B_\alpha\zeta^{(k)}w)|_{S_1^{(k)}}]\cr
&\quad+\sum_{k\in{\cal N}_1}\sum_{\alpha=1}^3\eta^{(k)}Z_kR^{(k)}[Z_k^{-1} (L(x,\partial_x,\partial_t)-L(\xi^{(k)},\partial_x,\partial_t))\zeta^{(k)}w,\cr
&\quad Z_k^{-1}(B_\alpha(x,\partial_x)-B_\alpha(\xi^{(k)},\partial_x)) \zeta^{(k)}w|_{S_1^{(k)}}]\cr
&\quad+\sum_{k\in{\cal N}_1}\sum_{\alpha=1}^3\eta^{(k)}Z_kR^{(k)}[(L(\xi^{(k)}, \partial_z-\nabla F_k\partial_{z_3})-L(\xi^{(k)},\partial_z))Z_k^{-1}\zeta^{(k)}w,\cr
&\quad (B_\alpha(\xi^{(k)},\partial_z-\nabla F_k\partial_{z_3})-B_\alpha(\xi^{(k)},\partial_z))Z_k^{-1}\zeta^{(k)}w|_{S_1^{(k)}}]\cr
&\quad+\sum_{k\in{\cal N}_2}\sum_{\alpha=1}^3\eta^{(k)}R^{(k)}[(\zeta^{(k)}Lw-L\zeta^{(k)}w),\cr
&\quad (\zeta^{(k)}B_\alpha w-B_\alpha\zeta^{(k)}w)|_{S_2^{(k)}}]\cr
&\quad+\sum_{k\in{\cal N}_2}\sum_{\alpha=1}^3\eta^{(k)}R^{(k)}[(L(x,t,\partial_x)-L(\xi^{(k)},0,\partial_x)) \zeta^{(k)}w,\cr
&\quad (B_\alpha(x,\partial_x)-B_\alpha(\xi^{(k)},\partial_x))\zeta^{(k)}w|_{S_2^{(k)}}]\cr
&\quad+\sum_{k\in{\cal N}_3}\sum_{\alpha=1}^3\eta^{(k)}R^{(k)} [(\zeta^{(k)}L\circ\Phi_1^{(k)}w-L\circ\Phi_1^{(k)}\zeta^{(k)}w),\cr
&\quad(\zeta^{(k)}B_\alpha\circ\Phi_2^{(k)}w-B_\alpha\circ\Phi_2^{(k)} \zeta^{(k)}w)|_{\bar S_1^{(k)}\cup\bar S_2^{(k)}}],\cr}
$$
where transformations $\Phi_1^{(k)}$ and $\Phi_2^{(k)}$ are defined above.

For $\tau$ and $\lambda$ sufficiently small the norm
$$
\|W\|_{{\cal B}_p^{\sigma+2}\to{\cal B}_p^{\sigma+2}}
$$
is less than 1. This ends the proof of Lemma \ref{l5.1}.
\end{proof}

Repeating the proof of Lemma \ref{l5.1} we have

\begin{lemma}\label{l5.3}
Let $p\in(1,\infty)$, $\sigma\in\N_0\equiv\N\cup\{0\}$. Let the assumptions of Lemma \ref{l5.1} hold. Then there exists a solution to problem (\ref{5.4}) such that $u\in W_p^{2+\sigma,1+\sigma/2}(\Omega^\tau)$, $\nabla p\in W_p^{\sigma,\sigma/2}(\Omega^\tau)$, where $\tau$ is sufficiently small and estimate (\ref{5.5}) holds.
\end{lemma}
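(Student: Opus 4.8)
The plan is to repeat, essentially verbatim, the regularizer scheme used to prove Lemma~\ref{l5.1}. Using the partition of unity of Definition~\ref{d2.17} one localizes (\ref{5.4}) to the three families of model problems — the Cauchy problem in $\R^3\times(0,\tau)$ for $k\in{\cal M}$, the half-space problem with slip boundary conditions for $k\in{\cal N}_1\cup{\cal N}_2$, and, after flattening $S_1\cap\supp\zeta^{(k)}$, homogenizing the Neumann/Dirichlet conditions on the image of $S_1$ and reflecting, the dihedral-angle problem (again reduced to the half-space one) for $k\in{\cal N}_3$. The operators $R$, $T=(T_1,T_2)$, $W$ are defined exactly as before, and the contraction bounds $\|T\|<1$, $\|W\|<1$ for $\tau$ and $\lambda$ small follow from the commutator structure of $L$ and $B$ with the cut-offs $\eta^{(k)},\zeta^{(k)}$, the smallness of the oscillation of the coefficients of $L,B$ over $\supp\zeta^{(k)}$ and of $\nabla F_k$. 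None of this part of the argument distinguishes integer from non-integer $\sigma$.

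Hence the only point requiring a separate treatment is solvability of the model problems in $B_{p,p}^{\sigma,\sigma/2}$ for $\sigma\in\N_0$. For the whole space this is already Lemma~\ref{l3.1}, stated for every $\sigma\in\R_+$. For the half-space problem (\ref{4.17}) I would re-run the proof of Theorem~\ref{t4.3}: the computations in Lemmas~\ref{l4.4}, \ref{l4.6}, \ref{l4.7} are phrased entirely in terms of the Littlewood--Paley blocks $\varphi_k$ (Definitions~\ref{d2.3}--\ref{d2.5}), which make sense for any $\sigma\ge0$; when $\sigma\in\N_0$ one has $[\sigma]=\sigma$, the third ($z$-difference) term of Definition~\ref{d2.5} disappears, and the only facts about $e_0,e_1$ actually invoked are the pure derivative estimates (\ref{2.13}) and (\ref{2.15}) of Lemma~\ref{l2.19}, valid for all integer orders — the fractional-difference estimates (\ref{2.14}), (\ref{2.16}) are not used. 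Alternatively, for integer $\sigma\ge1$ one may choose $\varepsilon\in(0,1)$ with $\sigma\pm\varepsilon\notin\N$ and argue by real interpolation: since $B_{p,p}^{\sigma,\sigma/2}=(B_{p,p}^{\sigma-\varepsilon,(\sigma-\varepsilon)/2},B_{p,p}^{\sigma+\varepsilon,(\sigma+\varepsilon)/2})_{1/2,p}$ on the data, solution and trace spaces, and the solution operator furnished by Lemma~\ref{l5.1} (returning the unique $(u,\nabla p)$ solving (\ref{5.4})) is the same at the two endpoints and bounded there, its boundedness — and with it (\ref{5.5}) — transfers to the interpolated integer-order space. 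The endpoint $\sigma=0$, not reachable from above by interpolation, is covered by the first route: the spaces degenerate to $L_p(\Omega^\tau)$, $W_p^{1,1/2}(\Omega^\tau)$ and their traces, and the Fourier-multiplier estimates of Sections~\ref{se3}--\ref{se4}, being uniform in $\sigma\ge0$, give the model-problem bounds directly. Reassembling the local solutions with the regularizer as in Lemma~\ref{l5.1} then gives $u\in W_p^{2+\sigma,1+\sigma/2}(\Omega^\tau)$, $\nabla p\in W_p^{\sigma,\sigma/2}(\Omega^\tau)$ and (\ref{5.5}).

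The main obstacle I anticipate is the bookkeeping in the half-space model problem at integer smoothness: one must check that, when $\sigma-[\sigma]=0$, the norm of Definition~\ref{d2.5} is equivalent to the one without the $z^{1+p(\sigma-[\sigma])}$-term — this uses the extension Lemma~\ref{l2.7} together with the boundedness of the Littlewood--Paley projections — and that Lemmas~\ref{l4.4}--\ref{l4.7} then close using only (\ref{2.13}) and (\ref{2.15}). The interpolation alternative removes this difficulty but instead requires one to verify that the anisotropic data and trace spaces $W_p^{1+\sigma-1/p,1/2+\sigma/2-1/2p}(S^\tau)$ and $W_p^{2+\sigma-1/p,1+\sigma/2-1/2p}(S^\tau)$ interpolate consistently with the single parameter $1/2$; this is routine but should be stated with care.
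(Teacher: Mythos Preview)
Your proposal is correct and follows the same approach as the paper: the paper's entire proof of Lemma~\ref{l5.3} is the single sentence ``Repeating the proof of Lemma~\ref{l5.1} we have'', placed just before the lemma statement. You supply considerably more detail than the authors do --- in particular the discussion of how the half-space estimates of Section~\ref{se4} survive at integer $\sigma$ (either by dropping the $z$-difference part of Definition~\ref{d2.5} and using only (\ref{2.13}), (\ref{2.15}), or via real interpolation) --- but this is elaboration of, not deviation from, the paper's route.
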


\section{Existence in Sobolev spaces}\label{se6}

\begin{theorem}\label{t6.1} \null\\
Assume that $f\in L_r(\Omega^T)$, $g\in W_r^{1,1/2}(\Omega^T)$, $b_\alpha\in W_r^{1-1/r,1/2-1/2r}(S^T)$,\break $\alpha=1,2$, $b_3\in W_r^{2-1/r,1-1/2r}(S^T)$, $v_0\in W_r^{2-2/r}(\Omega)$, $r\in(1,\infty)$, $S=S_1\cup S_2$, $S_1\in C^2$, $L\in C^2$. Then there exists a solution to problem (\ref{1.1}) such that
$$
v\in W_r^{2,1}(\Omega^T),\quad \nabla p\in L_r(\Omega^T)
$$
and
\begin{equation}\eqal{
&\|v\|_{W_r^{2,1}(\Omega^T)}+\|\nabla p\|_{L_r(\Omega^T)}\le c\bigg(\|f\|_{L_r(\Omega^T)}\cr
&\quad+\|g\|_{W_r^{1,1/2}(\Omega^T)}+\sum_{\alpha=1}^2 \|b_\alpha\|_{W_r^{1-1/r,1/2-1/2r}(S^T)}\cr
&\quad+\|b_3\|_{W_r^{2-1/r,1-1/2r}(S^T)}+\|v_0\|_{W_r^{2-2/r}(\Omega)}\bigg).\cr}
\label{6.1}
\end{equation}
\end{theorem}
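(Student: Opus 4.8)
The plan is to obtain Theorem~\ref{t6.1} as the endpoint $\sigma=0$ of the scheme that produced Theorem~\ref{t1.1}. Note that $L_r(\Omega^T)=W_r^{0,0}(\Omega^T)$, $W_r^{1,1/2}=W_r^{0+1,0/2+1/2}$, $W_r^{1-1/r,1/2-1/2r}=W_r^{0+1-1/r,0/2+1/2-1/2r}$, $W_r^{2-1/r,1-1/2r}=W_r^{0+2-1/r,0/2+1-1/2r}$ and $W_r^{2-2/r}=W_r^{0+2-2/r}$, so the hypotheses are exactly the $\sigma=0$ instances of those of Theorem~\ref{t1.1}, and the compatibility conditions of Remark~\ref{r1.2} specialise accordingly (for $\sigma=0$ the time-compatibility in Lemmas~\ref{l2.14}--\ref{l2.15} is vacuous except for $\divv v_0=g|_{t=0}$, which is assumed). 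As in Section~\ref{se5} I would first subtract the time extension $\tilde v_0$ of $v_0$ (Lemma~\ref{l2.9} with exponent $2$) and pass to problem (\ref{5.4}) with vanishing initial data, so it suffices to solve (\ref{5.4}) for $\sigma=0$ and then undo $v=u+\tilde v_0$. This reduces everything to re-running Sections~\ref{se3} and \ref{se4} with $\sigma=0$ and feeding the result into the regularizer of Section~\ref{se5}; only the two genuinely Besov-flavoured estimates of Sections~\ref{se3}--\ref{se4} need replacement.

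For the whole space I would proceed as in Section~\ref{se3}: remove the divergence by solving $\Delta\varphi=g'$, extend by zero in $t<0$, and use the explicit symbols (\ref{3.16}). The operators $\hat f\mapsto\widehat{\partial_t v}$, $\hat f\mapsto\widehat{D_x^2v}$ and $\hat f\mapsto\widehat{\nabla p}$ carry the symbols $\tau P(\xi)/(\tau+|\xi|^2)$, $\xi_j\xi_k P(\xi)/(\tau+|\xi|^2)$ and $\xi\otimes\xi/|\xi|^2$, which are anisotropically homogeneous of degree $0$ and satisfy the Lizorkin--Mikhlin multiplier condition, hence are bounded on $L_r(\R^4)$ for $r\in(1,\infty)$; this gives the $\sigma=0$ analogue of Theorem~\ref{t3.3}. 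For the half space I would again split off $\tilde v_0$, solve the Neumann problem (\ref{4.9}), extend $f''$ by zero across $x_3=0$ via the $\sigma=0$ version of the argument around (\ref{4.13}), apply the whole-space result to (\ref{4.14}), and be left with the homogeneous system (\ref{4.17}) whose solution is the boundary-potential representation (\ref{4.30})--(\ref{4.33}) built from $e_0,e_1,e_2$ and the symbols $g_{mr},h_{mr}$. The pointwise bounds on these symbols are precisely those of Lemmas~\ref{l2.19} and \ref{l4.1}; combining them with the anisotropic Lizorkin theorem and with the trace/extension theory of $W_r^{2,1}(\R_+^3\times\R_+)$ onto $\{x_3=0\}$ (Lemmas~\ref{l2.7}--\ref{l2.9}, giving the boundary spaces $W_r^{2-1/r,1-1/2r}$ and $W_r^{1-1/r,1/2-1/2r}$) yields $u\in W_r^{2,1}$, $\nabla q\in L_r$ with the estimate of Theorem~\ref{t4.8} for $\sigma=0$, and the dihedral-angle case is handled exactly as in Section~\ref{se4} by homogenising the Neumann/Dirichlet data on $x_1=0$ and reflecting. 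Finally, the regularizer construction of Lemma~\ref{l5.1} --- the operators $R$, $T=(T_1,T_2)$, $W$, and the bounds $|D_x^s\zeta^{(k)}|\le c_s\lambda^{-s}$, $|D^s\eta^{(k)}|\le c_s\lambda^s$ together with the smallness of the coefficient oscillation over $\supp\zeta^{(k)}$ --- is insensitive to $\sigma$, so with the $\sigma=0$ local estimates just obtained one again gets $\|T\|_{{\cal S}_r^0\to{\cal S}_r^0}<1$ and $\|W\|_{{\cal B}_r^{2}\to{\cal B}_r^{2}}<1$ for $\lambda$ and $\tau$ small; inverting $I+T$ and $I+W$ gives (\ref{5.5}) with $\sigma=0$ (this is the case $\sigma=0$ of Lemma~\ref{l5.3}), and a standard iteration in time over finitely many intervals of length $\tau$, followed by $v=u+\tilde v_0$, upgrades it to (\ref{6.1}) on $\Omega^T$.

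The main obstacle will be the half-space step. In the Besov setting of Lemmas~\ref{l4.4}--\ref{l4.7} the boundary regularity was absorbed into a dyadic sum over the frequency parameter $l$; for $\sigma=0$ this device is no longer available, so the $L_r$-boundedness of the potential operators with symbols $g_{mr}e_1+h_{mr}e_0$ and $h_{mr}e_0$ must be deduced directly from the decay estimates of Lemma~\ref{l4.1} via the anisotropic Lizorkin multiplier theorem combined with the sharp trace characterisation of the boundary data --- and it is exactly here, and in the flattening of $S_1$ near the edges, that the regularity hypotheses $S_1\in C^2$, $L\in C^2$ are used. Everything else (the whole-space multiplier bounds and the regularizer algebra) is routine once the half-space estimate is in hand.
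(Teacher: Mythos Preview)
Your proposal is correct and follows the same route as the paper: localize via the partition of unity, solve the interior, $S_1$-, $S_2$-, and edge-neighbourhood model problems in $W_r^{2,1}\times L_r$, and glue with the regularizer of Section~\ref{se5} (the $\sigma=0$ case recorded in Lemma~\ref{l5.3}). The paper's own proof is a three-line sketch that simply asserts the local $W_r^{2,1}\times L_r$ estimates ``can be shown'' and then invokes Section~\ref{se5}; you have correctly identified that the Besov dyadic machinery of Sections~\ref{se3}--\ref{se4} must be replaced at $\sigma=0$ by anisotropic Fourier-multiplier arguments for the explicit symbols in (\ref{3.16}) and (\ref{4.30})--(\ref{4.33}), which is exactly the content behind the paper's assertion (and is classical, cf.\ the references \cite{S2}--\cite{S4}).
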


\begin{proof}
Using the partition of unity we consider localized problems from (\ref{1.1}) near an interior point of $\Omega$, near $S_1$, $S_2$ and near edges $L_1$, $L_2$. Existence of solutions of these problems and estimates in $W_r^{2,1}\times L_r$ can be shown. Finally, the technique of regularizer (see Section \ref{se5}) ends the proof of the theorem.
\end{proof}

\

\begin{thebibliography}{XXXX}

\bibitem[A]{A} Adams, R.A.: Sobolev spaces, Acad. Press, New York, 1975.

\bibitem[Al]{Al} Alame, W.: On existence of solutions for the nonstationary Stokes system with boundary slip conditions, Appl. Math. 32 (2) (2005),195--223.

\bibitem[Am]{Am} Amann, H.: Operator-valued Fourier Multipliers, vector-valued Besov spaces, and applications, Math. Nachr. 186 (1997),5--56.

\bibitem[B]{B} Bae, H.-O.: Analiticity for the Stokes operator in Besov spaces, J. Korean Math. Soc. 40 (2003), no. 6, 1061--1074.

\bibitem[BIN]{BIN} Besov, O.V.; Il'in, V.P.; Nikolskii, S.M.: Integral Representation of Functions and Theorems of Imbedding, Nauka, Moscow 1975 (in Russian); English transl., vol. I. Scripta Series in Mathematics. V. H. Winston, New York 1978.

\bibitem[CZ]{CZ} Chen, Q.; Zhang, Z.: Space-time estimates in the Besov spaces and the Navier-Stokes equations, Meth. and Appl. of Anal., 13 (2006), No. 1, 107-–122.

\bibitem[DM]{DM} Danchin, R; Mucha, P.B.: A critical functional framework for the inhomogeneous Navier–-Stokes equations in the half-space, Journal of Functional Analysis, 256 (2009), Issue 3, Pages 881--927.

\bibitem[FQ]{FQ} Fang, D.; Qian, C.: Regularity criterion for 3D Navier-Stokes
equations in Besov spaces, Comm. on Pure and Appl. Anal. (2014) 13, no. 2, 585--603.

\bibitem[G]{G} Golovkin, K.K.: On equivalent normalizations of fractional spaces, Trudy Mat. Inst. Steklov 66 (1962), 364--383.

\bibitem[KM]{KM} Kobayashi, T.; Muramatu, T.: Abstract Besov space approach to the nonstationary Navier-Stokes equations, Math. Methods Appl. Sci. 15 (1992), no. 9, 599–-620.

\bibitem[KOT1]{KOT1} Kozono, H.; Ogawa, T.; Taniuchi, Y.: The critical Sobolev inequalities in Besov spaces and regularity criterion to some semi-linear evolution equations, Math. Z., 242 (2002), 251–-278.

\bibitem[KOT2]{KOT2} Kozono, H.; Ogawa, T.; Taniuchi, Y.: Navier-Stokes equations in Besov space near $L^{\infty}$ and BMO,  Kyushu J. Math. 57 (2003), 303--324.

\bibitem[KS]{KS}  Kozono, H.; Shimizu, S.: Navier–Stokes equations with external forces in time-weighted Besov spaces, Mathematische Nachrichten. 291 (2018), 1781-–1800.

\bibitem[LSU]{LSU} Ladyzhenskaya, O.A.; Solonnikov, V.A.; Uraltseva, N.N.: Linear and Quasilinear Equations of Parabolic Type. Nauka, Moscow 1967 (in Russian); Translated from the Russian by S. Smith. Translations of Mathematical Monographs, vol. 23, xi+648 pp. American Mathematical Society, Providence 1968.

\bibitem[N]{N} Nikolskii, S.M.: Approximation of Functions with many variables and Imbedding Theorems, Nauka, Moscow 1977 (in Russian).

\bibitem[OS]{OS} Ogawa, T.; Shimizu, S.: Global well-posedness for the incompressible Navier-Stokes equations in the critical Besov space under the Lagrangian coordinates, Journal of Differential Equations 274 (2021), 613--651.

\bibitem[R]{R} Ri, M.-H.: Global well-posedness for inhomogeneous Navier–Stokes equations in endpoint critical Besov spaces, J. Math. Fluid Mech. 23(1), (2021), art. no. 16.

\bibitem[S]{S} Sawada, O.: On time-local solvability of the Navier-Stokes equations in Besov spaces, Advances in Differential Equations, 8 (2003), No. 4, 385-–412.

\bibitem[S1]{S1} Solonnikov, V.A.: A priori estimates for second order parabolic equations, Trudy Mat. Inst. Steklov 70 (1964), 133--212 (in Russian).

\bibitem[S2]{S2} Solonnikov, V.A.: An initial-boundary value problem for the Stokes system that arises in the study of free boundary problem, Trudy Steklov Mat. Inst. 188 (1990), 150--188.

\bibitem[S3]{S3} Solonnikov, V.A.: Estimates for solutions of nonstationary linearized system of the Navier-Stokes equations, Trudy MIAN SSSR 70 (1964), 213--317.

\bibitem[S4]{S4} Solonnikov, V.A.: Estimates for solutions to nonstationary Navier-Stokes system, Zap. Nauchn. Sem. LOMI 38 (1973), 153--231.

\bibitem[Tr1]{Tr1} Triebel, H.: Theory of function spaces, Akademische Verlagsgesellschaft, Geest\&Portig K.-G., Leipzig 1983, pp. 284.

\bibitem[ZZ1]{ZZ1} Zadrzy\'nska, E.; Zaj\c{a}czkowski, W.M.: The Cauchy-Dirichlet problem for the heat equation in Besov spaces, J. Math. Sc. 152 (5) (2008), 638--673.

\bibitem[ZZ2]{ZZ2} Zadrzy\'nska, E.; Zaj\c{a}czkowski, W.M.: Nonstationary Stokes system in Besov spaces, Math. Meth. Appl. Sci. 37 (2014), 360--383.

\bibitem[ZZ3]{ZZ3} Zadrzy\'nska, E.; Zaj\c{a}czkowski, W.M.: Some linear parabolic system in Besov spaces, Banach Center Publ. 81 (2008), 567--612.

\end {thebibliography}
\end{document}